\tikzset { domaine/.style 2 args={domain=#1:#2} }
\tikzset{
 xmin/.store in=\xmin, xmin/.default=-3, xmin=-3,
 xmax/.store in=\xmax, xmax/.default=3, xmax=3,
 ymin/.store in=\ymin, ymin/.default=-3, ymin=-3,
 ymax/.store in=\ymax, ymax/.default=3, ymax=3,
 }
\newtheorem{theo}{Theorem}[section]
\newtheorem{prop}{Proposition}[section]
\newtheorem{co}{Corollary}[section]
\newtheorem{defi}{Definition}[section]
\newtheorem{lem}{Lemma}[section]
\newcommand{\dx}{\, \text{\textnormal{d}}}
\newcommand{\R}{\mathbb{R}}
\newcommand{\N}{\mathbb{N}}
\newcommand{\ve}{\varepsilon}
\newcommand{\eps}{\varepsilon}
\newcommand{\leb}{\mathcal{L}}
\renewcommand{\phi}{\varphi}
\renewcommand{\bar}{\overline}
\renewcommand{\tilde}{\widetilde}
\newcommand{\Lip}{\operatorname{Lip}}
\newcommand{\ddt}{\dfrac{\text{\textnormal{d}}}{\text{\textnormal{d}t}}}
\renewcommand{\O}{\mathcal{O}}
\newcommand{\gammato}{\xrightarrow{\Gamma}}
\newcommand{\supp}{\operatorname{supp}}
\newcommand{\grando}{\mathop{O}}
\newcommand{\id}{\text{id}}
\newcommand{\wto}{\rightharpoonup}
\title{The Monge problem with vanishing gradient penalization:\\ vortices and asymptotic profile}
\author{Luigi De Pascale\thanks{Dipartimento di Matematica, Universit\`a di Pisa, Largo Bruno Pontecorvo 5 - 56127 Pisa, Italy (\url{depascal@dm.unipi.it})}, Jean Louet\thanks{Laboratoire de Mathématiques d'Orsay, Universit\'e Paris-Sud, CNRS, Universit\'e Paris-Saclay, 91405 Orsay, France (\url{jean.louet@math.u-psud.fr}, \url{filippo.santambrogio@math.u-psud.fr})}, Filippo Santambrogio\footnotemark[2]}
\date{\today}
\begin{document}
\maketitle

\begin{abstract}
\noindent We investigate the approximation of the Monge problem (minimizing $\int_\Omega |T(x)-x|\dx\mu(x)$ among the vector-valued maps $T$ with prescribed image measure $T_\#\mu$) by adding a vanishing Dirichlet energy, namely $\eps\int_\Omega |DT|^2$. We study the $\Gamma$-convergence as $\eps\to 0$, proving a density result for Sobolev (or Lipschitz) transport maps in the class of transport plans. In a certain two-dimensional framework that we analyze in details, when no optimal plan is induced by an $H^1$ map, we study the selected limit map, which is a new ``special'' Monge transport, possibly different from the monotone one, and we find the precise asymptotics of the optimal cost depending on $\eps$, where the leading term is of order~$\eps|\log\eps|$.

\vspace{0.3cm} \noindent {\bf Keywords.} Optimal transport, Monge problem, monotone transport, $\Gamma$-convergence, density of smooth~maps

\vspace{0.3cm} \noindent{\bf AMS subject classification.} 49J30, 49J45

\end{abstract}

\section{Introduction}

This paper investigates the following minimization problem: given $\mu$, $\nu$ two - smooth enough - probability densities on $\R^d$ with $\mu$ supported in a domain $\Omega$, we study
$$ \inf\left\{ J_\eps(T) \,:\, T_\#\mu = \nu \right\} \qquad \text{where} \qquad J_\eps(T) = \int_\Omega |T(x)-x|\dx\mu(x) + \eps\int_\Omega |DT|^2 \dx x. $$
Here $\eps$ is a vanishing positive parameter, $|\cdot|$ is the usual Euclidean norm on $\R^d$ or $M_d(\R)$, $DT$ denotes the Jacobian matrix of the vector-valued map~$T$ and $T_\#\mu$ is the image measure of $\mu$ by $T$ (defined by $T_\#\mu (B)= \mu(T^{-1}(B))$ for any Borel set $B \subset \R^d$). We aim to understand the behavior of the functional $J_\eps$ in the sense of $\Gamma$-convergence and to characterize the limits of  the minimizers $T_\eps$.\bigskip

If we do not take in to account this gradient penalization, we recover the classical optimal transport problem originally proposed by Monge in the 18th century \cite{monge}. For this problem, the particular constraint $T_\#\mu=\nu$ makes the existence of minimizers quite difficult to obtain by the direct method of the calculus of variations; when the Euclidean distance is replaced with nicer functions (usually strictly convex with respect to the difference $x-T(x)$), strong progresses have been realized by Kantorovich in the '40s \cite{kant, kant2} and Brenier in the late '80s \cite{brenierpolar}.
 In the Monge's case of the Euclidean distance, the existence results have been shown more recently by several techniques: we just mention the first approach by Sudakov \cite{s} (later completed by Ambrosio \cite{a}) and the differential equations methods by Evans and Gangbo \cite{eg}, for the case of the Euclidean norm. More recently, this has been generalized to uniformly convex norms by approximation \cite{cfmc,tw}; finally,~\cite{cdp} generalizes this result for any generic norm in $\R^d$. We refer to \cite{vil} for a complete overview of the optimal transport theory, and to the lecture notes~\cite[Section~3.1]{sln} or again~\cite{a} for the particular case of the Monge problem with Euclidean norm.

Adding a Sobolev-like penalization is very natural in many applications, for instance in image processing, when these transport maps could model transformations in the space of colors, which are then required to avoid abrupt variations and discontinuities (see \cite{fprpa}). Also, these problems appear in mechanics (see \cite{gm}) and computer-science problems \cite{lps} , where one looks for maps with minimal distance distortion (if possible, local isometries with prescribed image measure). 

However, in this paper we want to concentrate on the mathematical properties of this penalization. Notice first that this precisely allows to obtain very quickly the existence of optimal maps, since we get more compactness and we can this time use the direct method of the calculus of variations (see Prop.\@~\ref{directmethod} below; actually, this is trickier in the case where the source measure is not regular, {\it cf.}\@~\cite{l} and \cite[Chapter 1]{lphd}; this requires to use the theory developed in \cite{bbs}). The motivation for this vanishing penalization comes from the particular structure of the Monge problem. It is known that the minimizers of $J_0$ (that we will denote by $J$ in the following) are not unique and are exactly the transport maps from $\mu$ to $\nu$ which also send almost any source point $x$ to a point $T(x)$ belonging to the same {\it transport ray} as $x$ (see below the precise definition). Among these transport maps, selection results are particularly useful, and approximating with strictly convex transport costs $c_\eps(x,y)=|x-y|+\eps|x-y|^2$ brings to the {\it monotone transport}, {\it i.e.}~the unique transport map which is non-decreasing along each transport ray. About this special transport map, some regularity properties have been proven: continuity in the case of regular measures with disjoint and convex supports in the plane \cite{fgp}, uniform estimates on an approximating sequence under more general assumptions \cite{lsw}. The question that we propose here is to know which of these transport maps is selected by the approximation through the gradient penalization; it is natural to wonder whether this map is again the monotone one, and, more in general, to expect nice regularity properties for the selected map. \bigskip

In this paper, we analyze the behavior of $J_\eps$ when $\eps$ vanishes in the sense of the $\Gamma$-convergence (see \cite{braides} for the definitions and well-known results about this notion). First of all, we show the ``zeroth-order'' $\Gamma$-convergence of $J_\eps$ to the transport energy~$J$; although straightforward, the proof requires a non-trivial density result, namely the density of the set of Sobolev maps $T \in H^1(\Omega)$ sending $\mu$ to $\nu$ among the set of transport maps. This result looks natural and can of course be used in other contexts. Even if a similar statement was already present in \cite{BreGan,Schnir}, it was not stated in the same spirit as the formulation that we give here below. Moreover, we provide a significantly different proof, which recalls the proof of the density of transport maps into transport plans of~\cite[Section 1.5]{sln}.\smallskip

\begin{theo} \label{denssobolev} Let $\Omega$, $\Omega'$ be two Lipschitz polar domains of $\R^d$. Let $\mu \in \mathcal{P}(\Omega)$, $\nu \in \mathcal{P}(\Omega')$ be two probability measures, both absolutely continuous with respect to the Lebesgue measure, with densities $f$, $g$; assume that $f$, $g$ belong to $C^{0,\alpha}(\bar{\Omega})$, $C^{0,\alpha}(\bar{\Omega'})$ for some $\alpha > 0$ and are bounded from above and below by positive constants. Then the set
$$ \left\{ T \in \Lip(\Omega) \,:\, T_\#\mu = \nu \right\}$$
is non-empty, and is a dense subset of the set
$$ \left\{ T : \Omega \to \Omega' \,:\, T_\#\mu = \nu \right\} $$
endowed with the norm $||\cdot||_{L^2(\Omega)}$.

As a consequence, under these assumptions on $\Omega$, $\Omega'$, $\mu$, $\nu$, we have $J_\eps \gammato J$ as $\eps \to 0$. \end{theo}

\noindent The definition of ``Lipschitz polar domain'' is given below in Definition~\ref{lipdom}; it is a large class of star-shaped domains having Lipschitz boundary. We do not claim this assumption on $\Omega,\Omega'$ to be sharp, and introduced this definition essentially for technical reasons (see the proof in paragraph~\ref{proofdensity}).

Notice also that this density result would be completely satisfactory if the assumptions on the measures in order to get density of Lipschitz maps were the same as to get existence of at least one such a map. The assumptions that we used are likely not to be sharp, but are the most natural one if one wants to guarantee the existence of $C^{1,\alpha}$ transport maps (and it is typical in regularity theory that Lipschitz regularity is not easy to provide, whereas H\"older results work better; notice on the contrary that a well-established $L^p$ theory is not available in this framework). Anyway, from the proof that we give in Section~2, it is clear that we are not using much more than the simple existence of Lipschitz maps. It is indeed an interesting fact, already observed in  \cite[Section 1.5]{sln} that density results for a class $D$ in the set of transport plans or maps are essentially proven under the same natural assumptions on~$\mu$ and~$\nu$ which guarantee that there exists admissible transport plans or maps in $D$: here the set of smooth maps is dense, under assumptions essentially corresponding to those we need for the existence of smooth transport maps (exactly as it happens that transport maps are dense in transport plans provided $\mu$ has no atoms, which is exactly the standard assumption to guarantee that transport maps do exist). \smallskip

Now that the ``zeroth-order'' $\Gamma$-convergence is proven, the natural question which arises concerns the behavior of the remainder with respect to the order $\eps$: denoting by $W_1$ the optimal value of the Monge problem
, we need to consider $(J_\eps-W_1)/\eps$. The result is the most natural that we expect (and the proof is this time almost trivial):
\begin{equation} \frac{J_\eps(T_\eps)-W_1}{\eps} \gammato \mathcal H \label{ordereps} \end{equation}
$$ \text{where} \qquad \mathcal H(T) = \left\{ \begin{array}{l} \displaystyle\int_\Omega |DT|^2 \qquad \text{if } T \in \O_1(\mu,\nu)\cap H^1(\Omega) \\[2mm] +\infty \qquad \text{otherwise} \end{array} \right. $$
where we have denoted by $\O_1(\mu,\nu)$ the set of optimal maps for the Monge problem. Notice that this result allows immediately to build some examples of measures $\mu$, $\nu$ for which the minimal value of the function $\mathcal H$ is not attained by the monotone transport map from $\mu$ to $\nu$, thanks to suitable analysis of the minimization of the $H^1$-norm among the set of transport maps on the real line (which has been very partially treated in~\cite{ls}). \smallskip

The convergence \eqref{ordereps} gives immediately a first order approximation (meaning $\inf J_\eps = W_1 + \eps \inf \mathcal H + o(\eps)$) provided that $\inf \mathcal H \neq +\infty$, {\it i.e.}~when there exists at least one map which minimizes the Monge cost and belongs to the Sobolev space $H^1(\Omega)$. If no such map exists, the only information given by~\eqref{ordereps} is that the order of convergence of the minimal value is strictly smaller than~$1$, meaning that $(J_\eps(T_\eps)-W_1)/\eps \to +\infty$ for any family of maps $(T_\eps)_\eps$; in this case, both the order of convergence of $\inf J_\eps$ to $W_1$ and the selected map as $\eps \to 0$ are unknown. \bigskip

The rest of this paper is devoted to the complete study of this case in a particular example, which has very interesting properties with respect to these questions. We take as source domain the quarter disk in the plane $\Omega = \left\{ x = (r,\theta) \,:\, 0< r < 1, 0 < \theta < \pi/2 \right\}$ and as target domain an annulus located between two regular curves in polar coordinates $\Omega'= \left\{ x = (r,\theta) \,:\, R_1(\theta)< r < R_2(\theta), 0 < \theta < \pi/2 \right\}$; we endow these domains with two regular densities $f$, $g$ satisfying the condition:
$$ \text{for any }\theta, \quad \int_0^1 f(r,\theta) \,r\dx r = \int_{R_1(\theta)}^{R_2(\theta)} g(r,\theta) \, r \dx r $$
which means that the mass (with respect to $f$) of the segment with angle $\theta$ joining the origin to the quarter unit circle is equal to the mass (with respect to $g$) of the segment with same angle joining the two boundaries of $\Omega'$. The situation is described by Figure~\ref{figomega}.

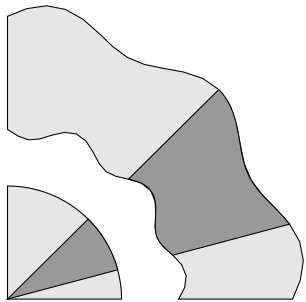
\begin{figure}[h]
\begin{center}
\begin{tikzpicture}[scale=1.5]

\filldraw[color=gray!20,draw=black] (0,0) -- (1,0) -- plot [domain=0:90] (\x:1) -- cycle ;

\filldraw[color=gray!20,draw=black] (1.5,0) -- plot [domain=0:90] (\x:{1.5+0.08*sin(12*\x)}) -- (0,2.5) -- plot [domain=0:90] (90-\x:{2.5+0.12*sin(10*\x)}) -- cycle ;

\filldraw[color=gray!80,draw=black] (15:0) -- (15:1) -- plot [domain=15:45] (\x:1) -- cycle;
\filldraw[color=gray!80,draw=black] (15:1.5) -- plot [domain=15:45] (\x:{1.5+0.08*sin(12*\x)}) -- (45:2.5) -- plot [domain=0:30] (45-\x:{2.5+0.12*sin(10*(45-\x))}) -- cycle;

\end{tikzpicture}
\caption{The domains $\Omega$, $\Omega'$. The two filled sectors have same mass.} \label{figomega}
\end{center}
\end{figure}

Under these assumptions, the transport rays are supported by the lines starting from the origin and the optimal transport maps for the Monge problem send each $x \in \Omega$ onto a point $T(x)$ with same angle; in other words, any optimal $T$ is written as
$$ T(x) = \phi(x)\frac{x}{|x|} $$
for some scalar function $\phi$ with $\inf \phi>0$. In particular, such a map~$T$ must present a singularity at the origin: if we write $\phi$ in polar coordinates, and if $\phi(r,\theta)$ has some regularity as $r\to 0$, the point~$0$ is sent by $T$ on the whole curve $\theta \mapsto \phi(0,\theta)$. It is then elementary to prove that $\int |DT|^2=+\infty$: we thus precisely face a case where $\mathcal H \equiv +\infty$ and, as we have seen above, the asymptotics $\inf J_\eps = W_1+O(\eps)$ is impossible, while the selected map as $\eps\to 0$ is unknown {\it a priori}.

In order to guess the behavior of $J_\eps$ as $\eps\to 0$, the preliminary analysis we propose is the following: starting from an optimal $T$ for the Monge problem, which is given by $T(x)=\phi(x)\cdot x/|x|$, we assume that $T$ is regular enough away from the origin. Then, a natural construction of a map $T_\eps$ having a finite Sobolev norm and which is close from $T$ consists in modifying $T$ only around the origin, and then in making it regular, while keeping for $T_\eps$ the constraint of sending $\mu$ onto $\nu$. This is possible thanks to the Dacorogna and Moser's result (see \cite{dm}, and below Theorem~\ref{appdacmos} which is the statement we actually use is this paper), which allows, given two regular enough measures, to send the first one to the second one through a Lipschitz diffeomorphism. From this perturbation, it then turns out (the formal computations which lead to it are presented in Paragraph~\ref{sectheu}) that
$$ J_\eps(T_\eps) = W_1+\eps|\log\eps| \frac{1}{3} ||\phi(0,\cdot)||_{H^1(0,\pi/2)}^2 + \grando\limits_{\eps\to 0}(\eps)  $$
when $\phi(x)=\phi(r,\theta)$ in polar coordinates. This expansion suggest the following phenomena:
\begin{itemize}
\item the leading term of the asymptotics of $J_\eps-W_1$ as $\eps\to 0$ has order $\eps|\log\eps|$;
\item the main ``rest'' (namely, $(J_\eps(T_\eps)-W_1)/(\eps|\log\eps|)$) involves only the behavior of $T$ around the origin;
\item more precisely, among the optimal transport maps for the Monge problem, the selected maps by the approximation seem to be those which send the origin to the curve $\theta\mapsto \phi(0,\theta)$ having the smallest possible one-variable Sobolev norm. \smallskip
\end{itemize}

The main result of this paper and its consequences allow to prove that this description of the behavior of $J_\eps$ and of its minimizers, as $\eps\to 0$, actually holds. As the above analysis suggests, we introduce the following notations:
\begin{itemize}
\item $\Phi$ is the one-variable function which realizes the smallest Sobolev norm, among the curves (in polar coordinates) which fully belong to the target domain $\bar{\Omega'}$, and $K$ is the square of its Sobolev norm:
\begin{equation} \Phi = \operatorname{argmin}\,\left\{ \int_0^{\pi/2} (\phi^2+\phi'^2) \,:\, R_1(\theta) \leq \phi(\theta) \leq R_2(\theta) \right\}\label{Phi} \end{equation}
\begin{equation} \text{and} \qquad K = ||\Phi||_{H^1(0,\pi/2)}^2 = \min\,\left\{ \int_0^{\pi/2} (\phi^2+\phi'^2) \,:\, R_1(\theta) \leq \phi(\theta) \leq R_2(\theta) \right\}\label{K} \end{equation}
(the existence and uniqueness of $\Phi$ are easy to obtain from convexity and semi-continuity properties of the problem which defines it);
\item for any $\eps>0$, the functional $F_\eps$ is defined, on the set of transport maps from $\mu$ to $\nu$, as
$$ F_\eps = \frac 1\eps\left(J_\eps - W_1 - \frac{K}{3}\eps|\log\eps|\right) $$
\end{itemize}
Our main result reads then as follows:

\begin{theo} \label{maintheo} Assume that the functions $R_1$, $R_2$ are Lipschitz on the interval $(0,\pi/2)$, and that $R_1'$ has finite total variation. Assume that $f$, $g$ are both Lipschitz and bounded from above and from below by positive constants on $\bar{\Omega}$, $\bar{\Omega'}$. We define the functional $F$ as:
\begin{itemize}
\item if $T$ does not belong to $\O_1(\mu,\nu)$, then $F(T)=+\infty$;
\item if $T$ belongs to $\O_1(\mu,\nu)$ and if, for any $x = (r,\theta) \in \Omega$, $T(x)=\phi(r,\theta)\dfrac{x}{|x|}$, we have
$$  F(T) = \int_0^1 \frac{||\phi(r,\cdot)||_{H^1(0,\pi/2)}^2-K}{r} \dx r + \int_0^1 ||\partial_r\phi(r,\cdot)||_{L^2(0,\pi/2)}^2 \, r \dx r $$
\end{itemize}
Then the following properties hold:
\begin{enumerate}
\item For any family of maps $(T_\eps)_\eps$ such that $(F_\eps(T_\eps))_\eps$ is bounded, there exists a sequence $\eps_k \to 0$ and a map $T$ such that $T_{\eps_k} \to T$ in~$L^2(\Omega)$.
\item There exists a constant $C$, depending only on the domains $\Omega$, $\Omega'$ and on the measures $f$, $g$, so that, for any family of maps $(T_\eps)_{\eps>0}$ with $T_\eps \to T$ as $\eps \to 0$ in $L^2(\Omega)$, we~have
$$ \liminf\limits_{\eps \to 0} F_\eps(T_\eps) \geq F(T)-C $$
\item Moreover, there exists at least one family $(T_\eps)_{\eps > 0}$ such that $(F_\eps(T_\eps))_\eps$ is indeed bounded. \end{enumerate} \end{theo}

\noindent Let us state also here the following property of the functional $F$, which allows, together with Theorem~\ref{maintheo}, to achieve the qualitative description of any limit of minimizers of $J_\eps$:

\begin{prop} Let $T$ be a map such that $F(T)<+\infty$. Then, denoting by $T(x)=\phi(r,\theta)\cdot x/|x|$, the function
$$ r \mapsto \phi(r,\cdot) $$
is continuous from $[0,1]$ to $L^2(0,\pi/2)$ and satisfies $\phi(0,\cdot)=\Phi$. \end{prop}

\noindent The meaning of this proposition is that any map having finite energy $F$ (and, thanks to Theorem~\ref{maintheo}, this is the case of any map $T$ which is limit of minimizers of $J_\eps$) must, as we guessed, send the origin to the curve of $\Omega'$ which is defined by $\Phi$. In particular, depending on the shape of $\Omega'$, it may appear that such a map is different from the monotone transport: this is the case as soon as the function $\Phi$ is not identically equal to $R_1$, the ``lower boundary'' of the domain $\Omega'$. \smallskip

Let us make some comments about Theorem~\ref{maintheo}. This result does not give precisely the $\Gamma$-limit of the functional $F_\eps$ but only a lower bound on the $\Gamma$-liminf; we conjecture that $F_\eps \gammato F-C$, where~$C$ is a suitable constant associated to the domains. However, this result is enough to obtain some important consequences on the behavior of the approximation. Indeed, the fact that there exists a family $(T_\eps)_\eps$ such that $(F_\eps(T_\eps))_\eps$ is bounded implies that this is the case when we take a sequence $T_\eps$ of minimizers. In in this case we necessary have, up to subsequences, $T_\eps \to T$ with $F(T)<+\infty$, which implies then that $T$ maps the origin onto the curve $\Phi$. Also, the asymptotic behavior which appears is precisely
$$ \inf J_\eps = W_1 + \frac{K}{3}\eps|\log\eps| + \grando\limits_{\eps\to 0}(\eps) $$
where $K = ||\Phi||_{H^1}^2$, the minimal (square of the) Sobolev norm of the curves valued in $\bar{\Omega'}$. \bigskip

We also notice that the $\eps|\log\eps|$ term in the energy is due to the blow-up of the Sobolev norm at a single point. This fact suggests a formal but deep analogy with the Ginzburg-Landau theory (see for instance \cite{bbh}), where one looks at the minimization of
\begin{equation}\label{GLbbh}
 u \mapsto \frac{1}{\eps^2} \int_\Omega (1-|u|^2)^2 + \int_\Omega |\nabla u|^2
 \end{equation}
with given boundary conditions on $u$ (assuming $|u|=1$ on $\partial \Omega$). Here, as $\ve\to 0$, the two terms are contradictory in the functional, as one requires $|u|\approx 1$ and the other $u\in H^1$. For instance, in the case where $\Omega$ is a ball and $u$ is constrained to be the identity on $\partial\Omega$, it is not possible to select an $H^1$ vector field, of unit norm, with the required boundary datum. In our case, the situation is similar. The two contradictory phenomena are the fact that $T$ has to preserve the transport rays (to optimize the Monge problem) and that it has a finite Sobolev norm; this also leads to the creation of an explosion (that we could call a vortex; here we have explosion at the origin, which is sent to a whole curve belonging to the target domain). The excess of order $\eps|\log\eps|$ is a common feature of the two problems.

If we want to push the analogy to a further level, we can recall the main precise result of the Ginzburg-Landau analysis of \cite{bbh}: the minimal value in \eqref{GLbbh} is asymptotically equivalent to $2\pi d_0|\log\eps|$, where $d_0$ is the topological degree of the map $u:\partial\Omega\to\mathbb S^1$, {\it i.e.}~it depends on the boundary condition. In our problem, we also identify the coefficient in front of the logarithmic cost, which is given by the constant $K/3$ in \eqref{K}: it depends on $\Omega'$, which is indeed part of the ``boundary condition'' ({\it i.e.}~of the constraint $T_\#\mu=\nu$ of our problem; notice that the fact that $T$ should send $\Omega$ into $\Omega'$ is usually called {\it second boundary condition} in the Monge-Amp\`ere community). Then, after the logarithmic term, both in the Ginzburg-Landau model and here there is a lower order analysis. In the Ginzburg-Landau problem this lets a functional based on the position where the vertices appear, while here we are only able to give some bounds on the $\Gamma$-limit of the re-scaled functional: however, the estimates for the $\Gamma$-liminf and the $\Gamma$-limsup only differ by an additive constant, and are enough to provide a satisfactory qualitative behavior of the optimal structure. This allows to find the limit profile of the optimal maps $T_\ve$ in polar coordinates, close to the vortex $r=0$. 

\paragraph{Plan of the paper.} Section 1 collects general notations and well-known facts about the Monge problem, basic notions of $\Gamma$-convergence and some useful and elementary results about the optimal transport with gradient term. In Section 2, we prove the density of the Sobolev transport maps among the transport maps and state the results of $\Gamma$-convergence with order $0$ and with order $1$ of $(J_\eps)_\eps$ for generic (regular) domains and measures. In Section 3, we study precisely the example of $\eps|\log\eps|$ approximation in the above framework; the main results and its interpretations are given in Paragraph 3.3, following a formal computation that we present in Paragraph 3.2. Section 4 is completely devoted to the rigorous proof of the main result of this paper (Theorem \ref{maintheo}).

\paragraph{Acknowledgements.} This work has been developed thanks to some visits of the second and third authors to the Universit\`a di Pisa, which have been possible thanks to the support of ANR project ANR-12-BS01-0014-01 GEOMETRYA and the PGMO project MACRO, of EDF and Fondation Math\'ematique Jacques Hadamard. This material is also based upon work supported by the National Science Foundation under Grant No.\@ 0932078~000, while the second author was in residence at the Mathematical Science Research Institute in Berkeley, California, during the Fall term 2013.
The research of the first author is part of the project 2010A2TFX2 {\it``Calcolo delle Variazioni''}, financed by the Italian Ministry of Research
 and is partially financed  by the {\it ``Fondi di ricerca di ateneo''}  of the  University of Pisa. 

\section{Preliminary notions}

\subsection{Known facts about the Monge problem}

In this section, we recall some well-known facts and useful tools about the optimal transportation problem with the Monge cost $c(x,y)=|x-y|$, where $|\cdot|$ is the Euclidean norm on $\R^d$. The proofs and many details can be found in \cite{a} and \cite[Section~3.1]{sln}.

Let $\Omega$, $\Omega'$ be two bounded open sets on $\R^d$, $\mu \in \mathcal{P}(\Omega)$ and $\nu \in \mathcal{P}(\Omega')$; assume that $\mu$ is absolutely continuous with respect to the Lebesgue measure with density $f$. Then we set
$$ W_1(\mu,\nu) = \min\left\{ \int_\Omega |T(x)-x| \dx \mu(x) \;:\; T : \Omega \to \R^d,\, T_\#\mu = \nu \right\} $$
the minimal value of the Monge transport cost from $\mu$ to $\nu$, and
$$ \O_1(\mu,\nu) = \left\{ T : \Omega \to \R^d \;:\; T_\# \mu = \nu \; \text{and} \; \int_\Omega |T(x)-x|\dx\mu(x) = W_1(\mu,\nu) \right\} $$
the set of optimal transport maps for the Monge cost (if there is no ambiguity, we will simply use the notations $W_1$ and $\O_1$). Notice that the above Monge problem is a particular issue of its Kantorovich formulation, {\it i.e.}
$$\min\left\{ \int_{\Omega\times\Omega'} |y-x| \dx \gamma(x,y) \;:\; \gamma\in \mathcal{P}(\Omega\times\Omega'): (\pi_x)_\#\gamma=\mu, \, (\pi_y)_\#\gamma= \nu \right\},$$
and the minimal value of this problem coincides with $W_1(\mu,\nu)$.

\begin{theo}[Duality formula for the Monge problem] We have the equality
$$ W_1(\mu,\nu) = \sup\left\{ \int_{\Omega'} u(y)\dx\nu(y) - \int_\Omega u(x)\dx\mu(x) : u \in \Lip_1(\R^d) \right\} $$
where $\Lip_1(\R^d)$ denotes the set of $1$-Lipschitz functions $\R^d \to \R$. The optimal functions $u$ are called \textnormal{Kantorovich potentials}. Moreover, if $\Omega$ is a connected set and if the set $\{f>0\}\setminus\Omega'$ is a dense subset of $\Omega$, then the restriction of the optimal $u$ to $\supp\mu\cup\supp\nu$ is unique, up to an additive constant. \end{theo}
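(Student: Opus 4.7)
The proof splits into (a) the duality equality and (b) the uniqueness of the Kantorovich potential. For (a), I would first establish the easy inequality $\sup \leq W_1$. Given any admissible $T$ with $T_\#\mu = \nu$ and any $u \in \Lip_1(\R^d)$, the pushforward rule gives
$$ \int_{\Omega'} u\dx\nu - \int_\Omega u\dx\mu = \int_\Omega \bigl(u(T(x))-u(x)\bigr)\dx\mu(x) \leq \int_\Omega |T(x)-x|\dx\mu(x), $$
and taking the infimum over $T$ on the right and supremum over $u$ on the left yields $\sup \leq W_1(\mu,\nu)$. This argument actually works for every transport plan $\gamma$ as well, which will be useful when passing through Kantorovich's relaxation.

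For the reverse inequality, I would invoke the Kantorovich--Rubinstein duality. The plan is to consider the relaxed problem over plans $\gamma \in \mathcal{P}(\Omega\times\Omega')$ with the correct marginals, whose value also equals $W_1(\mu,\nu)$ (equality Monge $=$ Kantorovich being known under our absolute continuity assumption, by Sudakov--Ambrosio or Evans--Gangbo). Applying Fenchel--Rockafellar duality to this linear problem produces the dual
$$ \sup\left\{ \int u\dx\nu + \int v \dx\mu \,:\, u(y)+v(x)\leq|x-y| \right\}. $$
Because $c(x,y)=|x-y|$ is a distance, the $c$-transform $v^c(x)=\inf_y(|x-y|-u(y))$ is automatically $1$-Lipschitz, and iterating once more gives a pair of the form $(u,-u)$ with $u\in\Lip_1$ without decreasing the objective. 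This identifies the dual with $\sup_{u\in\Lip_1}\bigl(\int u\dx\nu-\int u\dx\mu\bigr)$ and closes (a).

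For (b), I would analyze Kantorovich potentials $u$ via the transport ray structure. On the support of any optimal $\gamma$ one has $u(y)-u(x)=|x-y|$, so $u$ is affine with slope $1$ along the segment $[x,y]$, and at every Lebesgue point of $\mu$ (hence a.e.\ on $\{f>0\}$) this forces $|\nabla u|=1$. If $u_1,u_2$ are both Kantorovich potentials, then so is every convex combination $u_t=(1-t)u_1+tu_2$; the constraint $|\nabla u_t|\equiv 1$ a.e.\ on $\{f>0\}$ combined with $|\nabla u_i|\equiv 1$ forces $\nabla u_1 = \nabla u_2$ a.e.\ on $\{f>0\}$ by the strict convexity of the Euclidean norm on the unit sphere. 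Since $\{f>0\}\setminus\Omega'$ is dense in the connected set $\Omega$, the function $u_1-u_2$ is constant on an open dense subset of $\Omega$, hence constant on $\Omega$ by continuity, and its extension to $\R^d$ is then also determined up to this same constant by the $1$-Lipschitz envelope formula $u(x)=\inf_{y\in \overline{\Omega}}\bigl(u(y)+|x-y|\bigr)$. The main obstacle is this uniqueness part: both the rigidity step $|\nabla u|=1$ a.e.\ on $\{f>0\}$ (which requires the Sudakov--Ambrosio transport ray decomposition and a careful differentiability argument on rays) and the propagation from $\{f>0\}\setminus\Omega'$ to all of $\Omega$ depend crucially on the two hypotheses (connectedness, density) and are where care is needed; the duality itself is by now standard and can be invoked from \cite{vil,a}.
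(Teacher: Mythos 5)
First, note that the paper itself gives no proof of this statement: it sits in the ``Known facts'' preliminary section and is recalled from the standard literature (\cite{vil}, \cite{a}), so there is no in-paper argument to compare yours with. Your part (a) is the standard Kantorovich--Rubinstein duality argument and is correct, including the (necessary) remark that identifying the Kantorovich value with the Monge value $W_1(\mu,\nu)$ as defined over maps uses the absolute continuity of $\mu$.

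Part (b) contains two genuine gaps. First, the rigidity claim ``$|\nabla u|=1$ a.e.\ on $\{f>0\}$'' is false in general: the relation $u(y)-u(x)=|x-y|$ on $\supp\gamma$ constrains $u$ only along \emph{nondegenerate} segments, and at a point $x\in\Omega\cap\Omega'$ the pair $(x,x)$ may lie in $\supp\gamma$ (mass that does not move), in which case nothing is learned about $\nabla u(x)$; the extreme case $\mu=\nu$, where $u\equiv 0$ is a potential, shows this cannot be repaired. What is true is that at $\mu$-a.e.\ point of $\{f>0\}\setminus\bar{\Omega'}$ one has $y\neq x$ for every $(x,y)\in\supp\gamma$, and there $\nabla u(x)=\frac{y-x}{|y-x|}$ for \emph{every} potential $u$ differentiable at $x$ --- which already gives $\nabla u_1=\nabla u_2$ directly and makes the convex-combination/strict-convexity device unnecessary. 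Second, and more seriously, your propagation step fails: knowing $\nabla(u_1-u_2)=0$ at almost every point of a set that is merely \emph{dense} in $\Omega$ (such a set may be Lebesgue-null) does not imply that $u_1-u_2$ is constant, and neither does ``locally constant on an open dense subset plus continuity'' --- a devil's-staircase function is continuous on $(0,1)$ and locally constant on the open dense complement of a Cantor set without being constant. To conclude one needs either that $\{f>0\}\setminus\Omega'$ has full Lebesgue measure in $\Omega$ (so that the Lipschitz function $u_1-u_2$ has vanishing gradient a.e.\ on the connected open set $\Omega$, hence is constant), or a direct argument that $u_1(x)-u_2(x)$ takes the same value at all points of the dense set (e.g.\ by comparison along maximal transport rays), after which continuity finishes the job. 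As written, the uniqueness proof does not go through; this measure-versus-topology mismatch is exactly the delicate point that the hypothesis on $\{f>0\}\setminus\Omega'$ is meant to address, and a careful treatment can be found in \cite{sln}.
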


\noindent As a direct consequence of the duality formula, if $T:\Omega \to \R^d$ sends $\mu$ to $\nu$ and $u \in \Lip_1(\R^d)$, we have the equivalence:
$$ \left\{ \begin{array}{l} T \in \O_1(\mu,\nu) \\ u \text{ is a Kantorovich potential} \end{array} \right. \iff \text{for } \mu\text{-a.e. } x\in \Omega,\; u(T(x))-u(x) = |T(x)-x| $$
We now introduce the following crucial notion of {\it transport ray}:

\begin{defi}[Transport rays] Let $u$ be a Kantorovich potential and $x \in \Omega$, $y \in \Omega'$. Then:
\begin{itemize}
\item the open oriented segment $(x,y)$ is called \textnormal{transport ray} if $u(y)-u(x)=|y-x|$;
\item the closed oriented segment $[x,y]$ is called \textnormal{maximal transport ray} if any point of $(x,y)$ is contained into at least one transport ray with same orientation as $[x,y]$, and if $[x,y]$ is not strictly included in any segment with the same property.
\end{itemize} \end{defi}

\begin{prop}[Geometric properties of transport rays] The set of maximal transport rays does not depend on the choice of the Kantorovich potential $u$ and only depends on the source and target measures~$\mu$ and $\nu$. Moreover:
\begin{itemize}
\item any intersection point of two different maximal transport rays is an endpoint of these both maximal transport rays;
\item the set of the endpoints of all the maximal transport rays is Lebesgue-negligible.
\end{itemize} \end{prop}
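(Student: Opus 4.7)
The plan is to handle the three assertions of the proposition in turn, since each one calls for a rather different argument.

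\emph{Independence from the Kantorovich potential.} Let $u_1,u_2$ be two Kantorovich potentials and $\gamma$ any optimal transport plan. The equivalence displayed right before the definition of transport rays, applied to each $u_i$, yields $u_i(y)-u_i(x)=|y-x|$ for $\gamma$-a.e.\ $(x,y)$. Given an open segment $(x_0,y_0)$ which is a transport ray for $u_1$, my plan is to produce a chain of pairs $(x_k,y_k)\in\supp\gamma$ approximating the segment and then to propagate the equality $u_2(y_k)-u_2(x_k)=|y_k-x_k|$ to $(x_0,y_0)$ using the $1$-Lipschitz property of $u_2$ and a passage to the limit. Since the collection of supports of optimal plans depends only on $\mu$ and $\nu$ (by $c$-cyclical monotonicity), so does the set of transport rays, and the same holds for the maximal ones.

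\emph{Intersection at endpoints.} Suppose two distinct maximal transport rays $R_1=[x_1,y_1]$ and $R_2=[x_2,y_2]$ meet at a point $p$ which is interior to, say, $R_2$. Fix a Kantorovich potential $u$: by the definition of transport ray, $u$ grows at unit rate along each $R_i$. Testing the $1$-Lipschitz estimate on two nearby points, one on each ray, equidistant from $p$, forces the two unit directions of the rays to coincide, for otherwise $u$ would be forced to a slope strictly greater than $1$ between such a pair. Hence $R_1$ and $R_2$ are colinear through $p$, and their oriented concatenation is a transport ray strictly containing $R_1$ with the same orientation, contradicting the maximality of $R_1$. Therefore the intersection point $p$ must be an endpoint of both rays.

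\emph{Lebesgue-negligibility of the endpoint set.} This is the main technical obstacle and requires a Sudakov-type argument. By the intersection property just proved, the open maximal transport rays are pairwise disjoint and foliate a Borel subset of $\R^d$. My plan is to cover this subset by countably many patches on which the unit direction of the rays varies by less than a small parameter $\delta$, and then to build on each patch a bi-Lipschitz change of variables flattening the rays into parallel vertical segments over a hyperplane transverse to a reference direction. In each such chart, the subsets of ``upper'' and ``lower'' endpoints become graphs of two scalar functions on a $(d-1)$-dimensional base, hence have finite $\mathcal{H}^{d-1}$-measure and are Lebesgue-negligible in $\R^d$; summing over the countable cover concludes. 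The hardest step, where I would invest most care, is the construction of the ray-direction field with enough regularity to support the bi-Lipschitz charts: this is classically done by working on the set $\{|\nabla u|=1\}$ where $u$ is differentiable, and by refining the patch decomposition until the direction oscillation is controlled, following the approach detailed in Ambrosio's lecture notes \cite{a}.
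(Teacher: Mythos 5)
This proposition is stated in the paper's preliminary section as a recollection of classical facts; the paper gives no proof of it and implicitly delegates the argument to the literature (essentially Sudakov's construction as completed in Ambrosio's lecture notes \cite{a}), so there is no in-paper proof to compare yours against. Your outline does follow that standard route, and the middle assertion is handled correctly and essentially completely: the two-point test $u(y)-u(x)=2t\le|y-x|\le 2t$ with equality forcing colinearity, followed by concatenation contradicting maximality, is exactly the classical argument and works in all endpoint/interior configurations of the meeting point.

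The other two parts, however, are announced rather than proved, and each conceals the real difficulty. For the independence of the potential, the step ``produce a chain of pairs $(x_k,y_k)\in\supp\gamma$ approximating the segment'' is precisely where the content lies and it is not available in the generality you state it: a pair $(x,y)\in\Omega\times\Omega'$ with $u_1(y)-u_1(x)=|y-x|$ need not be approximable by pairs in the support of an optimal plan, and indeed the set of \emph{all} such pairs can genuinely depend on the potential in degenerate situations (e.g.\ when every $1$-Lipschitz function is a potential). The correct classical statement, and the one actually needed in the sequel, concerns the maximal rays that meet $\supp\mu$ (equivalently, that carry mass); for those one shows that any two potentials have the same incremental behaviour by combining $u_2(y)-u_2(x)=|y-x|$ on $\supp\gamma$ with the $1$-Lipschitz bound along the ray, and you should make this restriction explicit. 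For the negligibility of the endpoints, your plan names the right strategy (countable decomposition by direction, flattening charts, Fubini), but the ``hardest step'' you flag --- the countably Lipschitz regularity of the ray-direction field on the patches, which is what makes the charts bi-Lipschitz and the endpoint sets graphs --- is exactly the non-trivial lemma, and invoking it by reference means the proof is not self-contained. As submitted, then, this is a correct road map consistent with the source the paper itself relies on, but only the second bullet is actually proved.
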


\noindent These notions allow to prove the existence and to characterize the optimal transports:

\begin{prop}[Existence and characterization of optimal transport maps] The solutions of the Monge problem exist and are not unique; precisely, a map $T$ sending $\mu$ to $\nu$ is optimal if and only if:
\begin{itemize}
\item for a.e.~$x \in \Omega$, $T(x)$ belongs to the same maximal transport ray as $x$;
\item the oriented segment $[x,T(x)]$ has the same orientation as this transport ray.
\end{itemize} \end{prop}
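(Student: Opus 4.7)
The plan is to split the proposition into two essentially independent pieces: the characterization (the if-and-only-if), which is a rather soft consequence of the duality formula combined with the definition of transport ray, and the existence, which is the substantive content and requires a disintegration argument (or, alternatively, an approximation by strictly convex costs as mentioned in the introduction).

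For the characterization, I would start by fixing any Kantorovich potential $u\in \Lip_1(\R^d)$, whose existence is granted by the duality theorem stated above, and use the equivalence recorded just after it: $T\in \O_1(\mu,\nu)$ if and only if $u(T(x))-u(x)=|T(x)-x|$ for $\mu$-a.e.~$x$. The task is then to translate this analytic equality into the geometric transport-ray condition. Since $u$ is $1$-Lipschitz and its increment along $[x,T(x)]$ equals the length of the segment, $u$ must be affine with slope $+1$ along the whole segment; hence every interior point $z\in(x,T(x))$ satisfies $u(T(x))-u(z)=|T(x)-z|$, which by definition says that $z$ is contained in a transport ray with the same orientation as $[x,T(x)]$. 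By maximality, the whole segment $[x,T(x)]$ is contained in a unique maximal transport ray, oriented as required. The converse direction is immediate from the definition of a transport ray, since $u(T(x))-u(x)=|T(x)-x|$ can be checked ray by ray, and this equality combined with the duality formula and $T_\#\mu=\nu$ yields $\int_\Omega |T(x)-x|\dx\mu(x)=W_1$, i.e.~$T\in\O_1(\mu,\nu)$.

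For the existence, I would invoke the disintegration of $\mu$ along the family of maximal transport rays $\{R_\alpha\}_{\alpha\in A}$, which is available thanks to the geometric properties of transport rays already stated: two distinct maximal rays meet only at endpoints and the set of endpoints is Lebesgue-negligible, so $\mu$-a.e.~point belongs to the relative interior of exactly one $R_\alpha$. This yields $\mu=\int_A \mu_\alpha \dx\lambda(\alpha)$ with $\mu_\alpha$ supported on $R_\alpha$. Pushing forward through the Kantorovich potential $u$, which restricted to each ray is an isometry onto an interval of $\R$, and using that the same rays carry the mass of $\nu$ (a property one extracts from the cyclical monotonicity of any optimal plan $\gamma$), one obtains a compatible disintegration $\nu=\int_A \nu_\alpha \dx\lambda(\alpha)$. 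On each $R_\alpha$, the problem reduces to a one-dimensional transport between $\mu_\alpha$ and $\nu_\alpha$, which admits a map (for example the monotone rearrangement) since for $\lambda$-a.e.~$\alpha$ the conditional $\mu_\alpha$ is atomless. Gluing these one-dimensional transports produces a measurable $T:\Omega\to\R^d$ that by construction sends each $x$ to a point on the same ray with the correct orientation, hence is optimal by the characterization just proved. Non-uniqueness is then clear from the one-dimensional freedom on each ray.

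The main obstacle is the measure-theoretic justification of the disintegration: one needs $\lambda$-a.e.~conditional $\mu_\alpha$ to be absolutely continuous with respect to $\mathcal H^1\llcorner R_\alpha$ in order to apply the one-dimensional theory. This is the exact point where Sudakov's original argument was incomplete and was later repaired by Ambrosio by producing countably many Lipschitz selections of the ray directions; in the present paper this is a cited fact, so in a self-contained plan I would either quote it directly or replace the disintegration with the approximation argument $c_\eps(x,y)=|x-y|+\eps|x-y|^2$, whose unique minimizers $T_\eps$ converge (up to subsequences) to a limit $T$ which satisfies the transport-ray condition by passing to the limit in the monotonicity relation $(T_\eps(x)-T_\eps(y))\cdot(x-y)\geq 0$ combined with $u(T_\eps(x))-u(x)\to |T_\eps(x)-x|$.
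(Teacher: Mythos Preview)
The paper does not give a proof of this proposition: it is stated in Section~1.1 as one of the ``well-known facts'' about the Monge problem, with references to Sudakov~\cite{s}, Ambrosio~\cite{a}, Evans--Gangbo~\cite{eg} and the subsequent literature for the existence part, and the characterization is essentially the equivalence displayed right after the duality theorem. So there is no ``paper's own proof'' to compare against; your proposal is supplying an argument where the authors simply cite.

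That said, your sketch is the standard one and is correct in outline. The characterization part is exactly how one extracts the transport-ray condition from the duality equality $u(T(x))-u(x)=|T(x)-x|$; the only point worth making explicit is that in the converse direction you are implicitly using that $u$ is affine with slope $+1$ along the \emph{entire} maximal ray (not just along some sub-segment), which follows from the definition of maximal transport ray together with $1$-Lipschitzness of $u$. For the existence part you have correctly identified both the strategy (disintegrate, solve ray by ray, glue) and the genuine obstacle (absolute continuity of the conditional measures $\mu_\alpha$ with respect to $\mathcal H^1$ on each ray), and you correctly attribute the repair to Ambrosio. The alternative you mention via $c_\eps(x,y)=|x-y|+\eps|x-y|^2$ is also the one the paper alludes to when discussing the monotone map; note however that in your last sentence the claim ``$u(T_\eps(x))-u(x)\to |T_\eps(x)-x|$'' is not quite the right mechanism---what one actually uses is compactness of optimal plans plus stability of the Kantorovich problem to pass to the limit and obtain an optimal plan for $|x-y|$ that is induced by a map.
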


\noindent We finish by recalling that, among all maps in $\O_1(\mu,\nu)$, there is a special one which has received much attention so far, and by quickly reviewing its properties.

\begin{prop}[The monotone map and a secondary variational problem] If $\mu$ is absolutely continuous, there exists a unique transport map $T$ from $\mu$ to $\nu$ such that, for each maximal transport ray $S$, $T$ is non-decreasing from the segment $S \cap \Omega$ to the segment $S \cap \Omega'$ (meaning that if $x,x' \in \Omega$ belong to the same transport ray, then $[x,x']$ and $[T(x),T(x')]$ have the same orientation).Moreover, $T$ solves the problem
$$ \inf\left\{ \int_{\Omega} |T(x)-x|^2 \dx\mu(x) : T \in \O_1(\mu,\nu) \right\}.$$
\end{prop}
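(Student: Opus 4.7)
The plan is to reduce the problem to one-dimensional transport along each maximal transport ray via a disintegration argument. First I would exploit the geometric structure recalled above: since the set of endpoints of maximal transport rays is Lebesgue-negligible and two distinct rays can only meet at an endpoint, the interiors of the maximal rays form a measurable partition of $\mu$-a.e.~$\Omega$. Choosing a Borel parametrization of the rays (for instance via the direction field $\nabla u$ of any Kantorovich potential $u$, which determines the ray through $\mu$-a.e.~$x$) gives a measurable index space $\Lambda$, a projection $\pi : \Omega \to \Lambda$, and $\lambda := \pi_\#\mu$. The disintegration theorem then yields
$$ \mu = \int_\Lambda \mu_\alpha \dx\lambda(\alpha), \qquad \nu = \int_\Lambda \nu_\alpha \dx\lambda(\alpha), $$
with $\mu_\alpha$ (resp.~$\nu_\alpha$) concentrated on $S_\alpha\cap\Omega$ (resp.~$S_\alpha\cap\Omega'$). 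The fact that the $\nu$-disintegration is compatible with the same parametrization comes from the existence of at least one $T_0\in\O_1(\mu,\nu)$, which sends each ray into itself with the correct orientation, and therefore forces $(T_0)_\#\mu_\alpha=\nu_\alpha$ for $\lambda$-a.e.~$\alpha$.

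Since $\mu\ll\leb^d$, a Fubini-type argument transverse to the ray direction shows that $\mu_\alpha$ is absolutely continuous on the segment $S_\alpha\cap\Omega$ for $\lambda$-a.e.~$\alpha$. On each such ray, the classical one-dimensional monotone rearrangement defines a unique non-decreasing map $T_\alpha : S_\alpha\cap\Omega \to S_\alpha\cap\Omega'$ sending $\mu_\alpha$ onto $\nu_\alpha$. Gluing these maps produces a Borel-measurable $T$ with $T_\#\mu=\nu$, which by construction satisfies $T(x) \in S_\alpha$ with the right orientation whenever $x\in S_\alpha$; the characterization of optimal transports (recalled just above) then gives $T\in\O_1(\mu,\nu)$. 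Uniqueness of the ray-monotone transport follows immediately from the uniqueness of the one-dimensional monotone rearrangement between two absolutely continuous measures on an interval.

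For the secondary variational problem, for any $T'\in\O_1(\mu,\nu)$ one has, by the disintegration,
$$ \int_\Omega |T'(x)-x|^2\dx\mu(x) = \int_\Lambda \left(\int_{S_\alpha\cap\Omega} |T'(x)-x|^2 \dx\mu_\alpha(x)\right) \dx\lambda(\alpha). $$
Again because $T'$ preserves rays, $T'|_{S_\alpha}$ pushes $\mu_\alpha$ onto $\nu_\alpha$, and along the one-dimensional ray the quantity $|T'(x)-x|$ reduces to the absolute value of the signed displacement. The classical one-dimensional result (Brenier on an interval, or cyclical monotonicity applied to a strictly convex cost) states that the monotone rearrangement is the \emph{unique} minimizer of the quadratic cost among transports between two absolutely continuous measures on a segment. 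Hence for $\lambda$-a.e.~$\alpha$ the inner integral is minimized exactly by $T_\alpha=T|_{S_\alpha}$, and integrating in $\alpha$ yields the minimality (and uniqueness) of $T$ for the secondary problem.

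The main obstacle I anticipate is the measurability bookkeeping: one must produce a Borel parametrization of the family of maximal rays, verify that $\alpha\mapsto \mu_\alpha$ and $\alpha\mapsto\nu_\alpha$ are compatible (same $\lambda$-a.e.~support $S_\alpha$ with matching masses), and check that the $\alpha$-wise monotone rearrangement glues to a Borel map. This is where invoking a Kantorovich potential $u$ is crucial, since $\nabla u$ provides a measurable unit vector field encoding the ray direction, and the resulting one-parameter flow gives a concrete measurable ray coordinate that makes all these compatibilities transparent.
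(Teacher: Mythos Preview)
The paper does not actually prove this proposition: it appears in Section~1.1 (``Known facts about the Monge problem''), introduced by ``We finish by recalling\ldots'', and is stated without proof as background material, with a reference to the literature (notably~\cite{a}) for the details. So there is no ``paper's own proof'' to compare against.

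That said, your sketch is the standard approach and is essentially the one developed in the references the paper cites. The reduction to a one-dimensional problem on each ray via disintegration, followed by the monotone rearrangement and the strict convexity of $t\mapsto t^2$ for the secondary problem, is exactly right. The one point where you are (knowingly) hand-waving is the absolute continuity of the conditional measures $\mu_\alpha$ along the rays: your ``Fubini-type argument transverse to the ray direction'' hides the genuinely delicate step, which is precisely the gap in Sudakov's original argument that Ambrosio~\cite{a} had to repair. It requires a countable Lipschitz decomposition of the ray field (so that on each piece the ray parametrization is bi-Lipschitz and the coarea/area formula applies), not merely the measurability of $\nabla u$. Your last paragraph flags the measurability bookkeeping, which is fair, but the harder issue is this absolute-continuity step; if you want to turn the sketch into a proof you should either cite~\cite{a} at that point or reproduce the Lipschitz covering argument.
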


\noindent Notice that this solution is itself obtained as limit of minimizers of a perturbed variational problem, namely
$$ \inf\left\{ \int_\Omega |T(x)-x|\dx\mu(x) + \eps\,\int_\Omega |T(x)-x|^2\dx\mu(x) \,:\, T_\#\mu = \nu \right\} $$
This very special transport map is probably one of the most natural in $\O_1(\mu,\nu)$ and one of the most regular. In particular, under some assumptions on the densities $f$, $g$ and their supports $\Omega$, $\Omega'$ (convex and disjoint supports in the plane and continuous and bounded by above and below densities), this transport has also been shown to be continuous \cite{fgp}, and \cite{lsw} gives also some regularity results for the minimizer $T_\eps$ of an approximated problem where $c$ is replaced with $c_\eps(x,y) = \sqrt{\eps^2+|x-y|^2}$ (local uniform bounds on the eigenvalues of the Jacobian matrix $T_\eps$).

\subsection{Tools for optimal transport with gradient penalization}

\paragraph{Existence of solutions.} We begin by showing the existence of solutions for the penalized problem with an elementary proof:

\begin{prop} \label{directmethod} Let $\Omega$ be a bounded open set of $\R^d$ with Lipschitz boundary. Let $\mu \in \mathcal{P}(\Omega)$ be absolutely continuous 
with density $f$, and $\nu \in \mathcal{P}(\R^d)$ such that the set
$$ \left\{ T \in H^1(\Omega) \,:\, T_\#\mu = \nu \right\} $$
is non-empty. Then for any $\eps > 0$, the problem
\begin{equation} \inf\left\{ \int_\Omega |T(x)-x| f(x) \dx x + \eps \int_\Omega |DT(x)|^2 \dx x \,:\, T \in H^1(\Omega), \, T_\#\mu=\nu \right\} \label{otwithgrad} \end{equation}
admits at least one solution.\end{prop}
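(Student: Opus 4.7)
The plan is to apply the direct method of the calculus of variations, which here is essentially straightforward because the penalization term is coercive in $H^1$ and all constraints behave well under the relevant modes of convergence. The non-emptiness assumption supplies a finite infimum, so pick a minimizing sequence $(T_n) \subset H^1(\Omega)$ with $(T_n)_\#\mu = \nu$.

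First I would obtain the a priori bounds. Because $\nu$ has bounded support (being a pushforward of $\mu$ by an $H^1$ map with finite transport cost forces $\nu$-a.a.\ points to lie at finite distance from $\Omega$), the sequence is uniformly bounded in $L^\infty(\Omega)$, hence in $L^2(\Omega)$; if one prefers a softer argument, the finiteness of $\int|T_n-x|\,f\,dx$ together with $x \in \Omega$ bounded gives an $L^1$ bound, upgraded via the absolute continuity of $\mu$ and boundedness of $\nu$'s support. Together with the bound on $\int_\Omega|DT_n|^2\,dx$ from the minimization, Poincar\'e--Wirtinger (or just the combination above) produces a uniform $H^1$ bound. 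Then Rellich--Kondrachov lets me extract a subsequence (still denoted $T_n$) converging weakly in $H^1(\Omega)$ to some $T$, strongly in $L^2(\Omega)$, and a.e.\ in $\Omega$.

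Next I would check that $T$ is admissible, i.e.\ $T_\#\mu = \nu$. For any $\phi \in C_b(\R^d)$, we have $\int \phi(T_n(x))\,d\mu(x) = \int \phi \,d\nu$; by a.e.\ convergence of $T_n$ to $T$ and dominated convergence (using boundedness of $\phi$ and of $\mu$), the left-hand side tends to $\int \phi(T(x))\,d\mu(x)$, so $T_\#\mu = \nu$. Finally I would pass to the liminf in the two terms of $J_\eps$: the transport term $\int_\Omega |T_n(x)-x|f(x)\,dx$ converges to $\int_\Omega |T(x)-x|f(x)\,dx$ by dominated convergence (using the uniform $L^\infty$ bound and a.e.\ convergence), while the Dirichlet term $\int_\Omega |DT_n|^2\,dx$ is weakly lower semicontinuous in $H^1$ thanks to convexity. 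Combining these, $J_\eps(T) \le \liminf_n J_\eps(T_n) = \inf$, so $T$ is a minimizer.

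The only mildly delicate step is the stability of the pushforward constraint, but once a.e.\ convergence is in hand this reduces to the trivial application of dominated convergence sketched above. No compactness issue arises at the boundary because $\Omega$ is bounded with Lipschitz boundary, so the Rellich embedding and the a.e.\ extraction work inside $\Omega$ without further refinement.
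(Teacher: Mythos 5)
Your proposal follows exactly the paper's strategy: minimizing sequence, uniform $H^1$ bound, extraction of a subsequence converging strongly in $L^2(\Omega)$ and a.e., stability of the constraint $T_\#\mu=\nu$ by testing against $\phi\in C_b$, and lower semicontinuity of the functional under weak $H^1$ convergence. That skeleton is correct and is all the paper itself does.

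The one step where your argument goes wrong is the a priori bound. You claim that $\nu$ must have bounded support because it is the pushforward of $\mu$ by an $H^1$ map with finite transport cost, and you deduce a uniform $L^\infty(\Omega)$ bound on $T_n$. Neither implication holds: for $d\ge 2$ one has $H^1(\Omega)\not\hookrightarrow L^\infty(\Omega)$, so an admissible $T$ may be unbounded and $\nu=T_\#\mu$ may have unbounded support (the statement only assumes $\nu\in\mathcal{P}(\R^d)$); and even if $\nu$ were compactly supported, the constraint controls $T_n$ only $\mu$-a.e., i.e.\ on $\{f>0\}$, not Lebesgue-a.e. The correct route is the one you mention only in passing: the penalization bounds $\|DT_n\|_{L^2}$, while the finiteness of $\int_\Omega|T_n(x)-x|f(x)\dx x$ bounds $\int|T_n|\dx\mu$ and hence the average of $T_n$ on a set of positive Lebesgue measure where $f$ is bounded below (such a set exists since $\mu(\Omega)=1$); Poincar\'e--Wirtinger then yields the uniform $H^1$ bound. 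Consequently, for the transport term you should not invoke dominated convergence (which relied on the unavailable $L^\infty$ bound) but simply Fatou's lemma together with the a.e.\ convergence, which gives $\int_\Omega|T(x)-x|f\le\liminf_n\int_\Omega|T_n(x)-x|f$ --- and the liminf inequality is all that is needed to conclude that $T$ is a minimizer. With these two repairs your proof is complete and coincides with the paper's.
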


\begin{proof} We use the direct method of the calculus of variations. Let $(T_n)_n$ be a minimizing sequence; since this sequence is bounded in $H^1(\Omega)$, it admits, up to a subsequence, a limit $T$ for the strong convergence in $L^2(\Omega)$; moreover, we also can assume $T_n(x)\to T(x)$ for a.e.~$x\in \Omega$, which also implies that the convergence holds $\mu$-a.e.. This implies that $T$ still satisfies the constraint on the image measure (since, for a continuous and bounded function $\phi$, the a.e.~convergence provides $\int_\Omega \phi\circ T_n \dx\mu \to \int_\Omega \phi\circ T \dx\mu$) and is thus admissible for \eqref{otwithgrad}. On the other hand, it is clear that the functional that we are trying to minimize is lower semi-continuous with respect to the weak convergence in $H^1(\Omega)$. This achieves the proof. \end{proof}

\noindent This existence result can be of course adapted to any functional with form $\int_\Omega L(x,T(x),DT(x)) \dx\mu(x)$ under natural assumptions on the Lagrangian $L$ (continuity with respect to the two first variables, convexity and coercivity with respect to the third one).

\paragraph{Existence of smooth transport maps and Dacorogna-Moser's result.}

In the above existence theorem, the fact that the set of admissible maps is non-empty was an assumption. If the measures are regular enough, it can be seen as a consequence of the classical regularity results of the optimal transport maps for the quadratic cost \cite{caffarelli1,caffarelli2,caffarelli3,dpf}. We recall here another result which provides the existence of a regular diffeomorphism which sends a given measure onto another one, which will be used several times in the paper. This result is due to Dacorogna and Moser \cite{dm} (this construction is nowadays regularly used in optimal transport, starting from the proof by Evans and Gangbo, \cite{eg}; it is also an important tool for the equivalence between different models of transportation, see \cite{dacmosFil}; notice that the transport that they consider is in some sense optimal for a sort of congested transport cost, as pointed out years ago by Brenier in \cite{brenier}). The version of their result that we will use is the following:

\begin{theo} \label{appdacmos} Let $U \subset \R^d$ be a bounded open set with $C^{3,\alpha}$ boundary $\partial U$. Let $f_1$, $f_2$ be two positive Lipschitz functions on $\bar{U}$ such that
$$ \int_U f_1 = \int_U f_2 $$
Then there exists a Lipschitz diffeomorphism $T : \bar{U} \to \bar{U}$ satisfying
$$\left\{
\begin{array}{ll}
\det DT (x) = \dfrac{f_1(x)}{f_2(T(x))}, & x \in U \\[2mm]
T(x) = x, & x \in \partial U
\end{array}
\right. $$
Moreover, the Lipschitz constant of $T$ is bounded by a constant depending only on $U$, on the Lipschitz constants and on the lower bounds of $f_1$ and $f_2$ .
\end{theo}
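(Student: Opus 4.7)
The plan is to use the flow method of Moser, in the form extended by Dacorogna and Moser to handle the Dirichlet boundary condition $T|_{\partial U} = \id$. First I interpolate: set $f_t := (1-t) f_1 + t f_2$ for $t \in [0,1]$. Each $f_t$ is Lipschitz on $\bar U$, is bounded below by $\min(\inf f_1, \inf f_2) > 0$ uniformly in $t$, and satisfies $\int_U f_t = \int_U f_1$ for every $t$. The claimed map will be produced as $T := \Phi_1$, where $\Phi_t$ is the flow of a carefully chosen Lipschitz vector field $v_t$ that vanishes on $\partial U$.

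The key analytical step is to solve the divergence problem
$$ \div w = f_1 - f_2 \text{ in } U, \qquad w = 0 \text{ on } \partial U $$
by a Lipschitz vector field $w : \bar U \to \R^d$ whose Lipschitz norm is controlled by $\|f_1 - f_2\|_{C^{0,1}(\bar U)}$ and by the geometry of $U$. The compatibility condition $\int_U (f_1 - f_2) = 0$ holds by hypothesis, and the $C^{3,\alpha}$ regularity of $\partial U$ ensures that a bounded right inverse of the divergence with vanishing boundary trace exists on Lipschitz data; concretely, the Bogovskii construction yields $w \in W^{2,p}(U;\R^d)$ for every $p < \infty$, hence $w \in C^{1,\alpha}(\bar U)$ by Morrey's embedding, with quantitative norm bounds. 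This is the main obstacle of the proof, and it represents the essential content of the Dacorogna-Moser theorem.

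Once $w$ is in hand, set $v_t(x) := w(x)/f_t(x)$. Because $f_t$ is uniformly bounded below and Lipschitz in $x$, $v_t$ lies in $\Lip(\bar U; \R^d)$ with Lipschitz seminorm uniform in $t$, and it vanishes on $\partial U$. Cauchy-Lipschitz then produces a unique global flow $\Phi_t$ with $\ddt \Phi_t(x) = v_t(\Phi_t(x))$ and $\Phi_0 = \id$; it is a bi-Lipschitz self-homeomorphism of $\bar U$ fixing $\partial U$ pointwise (as $v_t = 0$ there), and Gronwall gives $\|\nabla \Phi_t\|_\infty \leq \exp\bigl(\int_0^t \|\nabla v_s\|_\infty \dx s\bigr)$.

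To verify the Jacobian identity, write $J_t(x) := \det \nabla \Phi_t(x)$ and compute, via Jacobi's formula $\ddt J_t = J_t \cdot (\div v_t)\circ \Phi_t$ and the chain rule,
$$ \ddt \bigl[ J_t(x) \, f_t(\Phi_t(x)) \bigr] = J_t(x) \cdot \bigl[ \partial_t f_t + \div(f_t v_t) \bigr](\Phi_t(x)). $$
Since $f_t v_t = w$ and $\div w = f_1 - f_2 = -\partial_t f_t$, the bracket vanishes identically in $t$; integrating from $0$ to $1$ with $J_0 \equiv 1$ and $f_0 = f_1$ yields $\det \nabla T(x) \cdot f_2(T(x)) = f_1(x)$ on $U$, as required. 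The quantitative Lipschitz control of $T$ in terms of $U$, $\Lip(f_i)$ and $\inf f_i$ then follows by tracking these constants through the bound on $\|w\|_{C^{0,1}}$ and through the Gronwall estimate for $\Phi_1$.
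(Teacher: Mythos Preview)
The paper does not actually prove this theorem: it is quoted from Dacorogna--Moser \cite{dm} (with the remark that the two-density version is ``easy to obtain from the original theorem''), and no proof is given in the text. Your argument is precisely the classical Moser flow method as carried out in \cite{dm}: linear interpolation of the densities, solution of a divergence equation with zero boundary trace, and transport along the resulting time-dependent vector field, with the Jacobian identity verified via Liouville's formula. This is correct and is exactly the approach the paper is invoking by citation; the only point worth flagging is that your appeal to Bogovskii for $w\in C^{1,\alpha}(\bar U)$ with $w|_{\partial U}=0$ is a slight variant of the original paper's construction (which goes through a Neumann problem and a correction), but it is equally valid and arguably cleaner for the quantitative Lipschitz bound you need here.
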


\noindent Notice that the equation satisfied by $T$ exactly means (as $T$ is Lipschitz and one-to-one) that it sends the measures with density $f_1$ onto the measure with density $f_2$. The result of the original paper \cite[Theorem 1']{dm} deals with different assumptions on the density $f_1$ ($f_1 \in C^{k+3,\alpha}(\bar{U})$ and the result is a $C^{k+1,\alpha}$ diffeomorphism) but only considered $f_2=1$. We gave here a formulation better suitable for our needs, easy to obtain from the original theorem; for the sake of completeness, let us state the following corollary, which will be used several times in practice:

\begin{co} \label{codm} Let $\Omega$, $\Omega'$ be two bounded open sets, and assume that there exists $\Omega''$ open, bounded and with $C^{3,\alpha}$ boundary, and $\psi_1 : \bar\Omega\to\bar{\Omega''}$, $\psi_2:\bar{\Omega'}\to\bar{\Omega''}$ two bi-Lipschitz diffeomorphisms, so that the Jacobian determinants $\det D\psi_1$, $\det D\psi_2$ are also Lipschitz. Let $f_1 \in \Lip(\bar\Omega)$, $f_2 \in \Lip(\bar{\Omega'})$ be two positive functions with $\int_\Omega f_1 = \int_{\Omega'} f_2$. Then there exists a bi-Lipschitz diffeomorphism $T:\bar{\Omega}\mapsto\bar{\Omega'}$ such that:
\begin{itemize}
\item $T$ sends the measure $f_1\cdot\leb^d|_{\Omega}$ to the measure $f_2\cdot\leb^d|_{\Omega'}$;
\item for any $x\in\partial\Omega$, $T(x) = \psi_2^{-1}\circ\psi_1(x)$.
\end{itemize}
Moreover, $\Lip T \leq C$, where $C$ depends only on the lower bounds of $f_1,f_2$, on their Lipschitz constants, on the domains and on $\psi_1$, $\psi_2$. \end{co}

\noindent The proof is elementary, by applying Theorem~\ref{appdacmos} to the domain $\Omega''$ and with the measures $(\psi_1)_\#f_1, (\psi_2)_\#f_2$ (the Lipschitz regularity of the Jacobian determinants is needed to guarantee regularity of these image measures).  We do not claim the assumptions of Corollary~\ref{codm} to be sharp, but they are sufficient for the case we are interested in (see paragraphs~2.2 and~4.2).

\paragraph{The one-dimensional case.} We finish by giving some very partial results about the optimal transport problem with gradient term on the real line. First, we recall the classical result about the one-dimensional optimal transportation problem (we refer for instance to \cite[Chapter~2]{sln} for the proof):

\begin{prop} Let $I$ be a bounded interval of $\R$ and $\mu \in \mathcal{P}(I)$ be atom-less and $\nu \in \mathcal{P}(\R)$. Then there exists a unique map $T:I\to \R$ which is non-decreasing and sends $\mu$ onto $\nu$, and a unique map $U:I\to\R$ which is non-increasing and sends $\mu$ to $\nu$. Moreover, if $h$ is a convex function $\R\to\R$, then the non-decreasing map $T$ solves the minimization problem
$$ \inf\left\{ \int_\Omega h(T(x)-x)\dx\mu(x) \,:\, T_\#\mu = \nu \right\} $$
with uniqueness provided that $h$ is strictly convex. \end{prop}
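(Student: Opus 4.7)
My plan is to build the monotone map explicitly via cumulative distribution functions, then show optimality through the submodularity inequality for convex functions, and finally deduce uniqueness for strictly convex $h$ by a swapping argument.

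First, define the cumulative distribution functions $F_\mu(x)=\mu((-\infty,x])$ and $F_\nu(y)=\nu((-\infty,y])$, together with the generalized inverse (quantile) $F_\nu^{-1}(s)=\inf\{y\in\R:F_\nu(y)\geq s\}$. Since $\mu$ is atom-less, $F_\mu$ is continuous on $I$. Set $T=F_\nu^{-1}\circ F_\mu$; then $T$ is non-decreasing by construction, and a direct computation shows $\mu(T^{-1}((-\infty,y]))=\mu(F_\mu^{-1}([0,F_\nu(y)]))=F_\nu(y)$, which gives $T_\#\mu=\nu$. For uniqueness among non-decreasing maps, if $S$ is any such map with $S_\#\mu=\nu$, then at every continuity point $y$ of $F_\nu$ we have $F_\mu(\{x:S(x)\leq y\})=F_\nu(y)$, which forces $S(x)=T(x)$ for $\mu$-a.e.\ $x$ (since $F_\mu$ is continuous, there is no ambiguity coming from atoms on the source side).

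For optimality, I would work with the Kantorovich relaxation and the plan $\gamma_T=(\mathrm{id},T)_\#\mu$. The key ingredient is the submodularity inequality: for any $a<b$ in $I$ and $c<d$ in $\R$, convexity of $h$ yields
\[ h(c-a)+h(d-b)\leq h(d-a)+h(c-b), \]
with strict inequality when $h$ is strictly convex. Monotonicity of $T$ means exactly that the support of $\gamma_T$ contains no anti-monotone pair $((a,d),(b,c))$, hence $\gamma_T$ is $c$-cyclically monotone for $c(x,y)=h(y-x)$ and therefore optimal in the Kantorovich problem. Since the Monge value is bounded below by the Kantorovich value and since $\gamma_T$ is itself induced by the admissible map $T$, we conclude that $T$ solves the Monge problem.

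To handle uniqueness when $h$ is strictly convex, suppose $S$ is another optimal transport map. Then $\eta=\tfrac12\gamma_T+\tfrac12\gamma_S$ is again optimal in the Kantorovich problem. If on a set of positive $\eta\otimes\eta$-measure we had two pairs $((a,S(a)),(b,S(b)))$ with $a<b$ but $S(a)>S(b)$, replacing this pair by the monotone one would strictly decrease the cost by the strict submodular inequality, contradicting optimality. Hence $S$ is non-decreasing $\mu$-a.e., and by the uniqueness statement proved in the first step, $S=T$ $\mu$-a.e.

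The only delicate point is checking the passage from cyclical monotonicity of the two-point type (the above swap inequality) to actual optimality; this is classical (it goes back to the solution of the assignment problem and is in the optimal transport textbooks), and in 1D it is essentially equivalent to the Hardy-Littlewood-Pólya rearrangement inequality, so I would just invoke these standard facts rather than reprove them in detail.
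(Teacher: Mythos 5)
Your proof is correct and is essentially the standard argument (quantile construction $T=F_\nu^{-1}\circ F_\mu$, optimality via submodularity/cyclical monotonicity, uniqueness via strict submodularity); the paper itself gives no proof and simply cites the lecture notes where exactly this argument appears. The only step you leave to the literature --- passing from the two-point swap inequality to genuine optimality of the plan $\gamma_T$ --- is indeed classical and acceptable to quote, so nothing is missing.
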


\noindent Now we state the results concerning the optimality of this monotone map $T$ for the Sobolev norm among the transport maps. 

\begin{prop} Let $I$ be a bounded interval of $\R$. Let $\mu$ be a positive and finite measure on $I$, having a density $f$, and $\nu$ be a positive measure on $\R$ having same mass as $\mu$. Then:
\begin{itemize}
\item Assume $\mu$ is uniform. Then, for any convex and non-decreasing function $h$ on $\R$, both the non-decreasing and the non-increasing maps from $\mu$ to $\nu$ solve the problem
$$ \inf \left\{ \int_I h(|T'(x)|)\dx x \,:\, T_\#\mu = \nu \right\} . $$
\item There exists a constant $\alpha_0 > 1$ so that, if $f$ satisfies $\dfrac{\sup f}{\inf f} \leq \alpha_0$, then either the non-decreasing map or the non-increasing map from $\mu$ to $\nu$ solves
\begin{equation} \inf \left\{ \int_I |T'(x)|^2 \dx x \,:\, T_\#\mu = \nu \right\} .  \label{otderiv1d} \end{equation}
\item On the other hand, for any $\alpha$ large enough, one can find a measure $\mu_\alpha$ having a density $f_\alpha$ satisfying $\dfrac{\sup f_\alpha}{\inf f_\alpha} = \alpha $, and a measure $\nu_\alpha$, for which neither the non-decreasing and the non-increasing transport map from $\mu_\alpha$ to $\nu_\alpha$ are optimal for~\eqref{otderiv1d}.
\end{itemize}
\end{prop}

\noindent The proof of the optimality of the monotone map where $\mu$ is uniform is fully included in~\cite{ls}, and the more general case is a consequence of the results of~\cite[Section~2.1]{lphd}. Concerning the case where $(\sup f)/(\inf f)$ is too large, it is enough to consider the following counter-example: we fix a function $V$ on $[0,1]$, which is equal to the triangle function ({\it i.e.}~$x\mapsto 2x$ on $[0,1/2]$ and $2-2x$ on $[1/2,1]$); and, for $\alpha>0$, we consider a density $f_\alpha$ which takes the value~$1$ on $[1/4,3/4]$, and the value $\alpha$ elsewhere on $[0,1]$. It is then easy to compute the unique non-decreasing map from $\mu_\alpha$ to the image measure $\nu_\alpha := V_\# \mu_\alpha$; denoting by $T_\alpha$ this map, we then check that $\int |V'|^2 < \int |T_\alpha'|^2$ for $\alpha$ large enough, and the same holds for the non-increasing map $U_\alpha$. The same construction has been used in order to build similar counter-examples in other kind of ``regularized'' transport problems (see~\cite{lps}).


\subsection{Definitions and basic results of $\Gamma$-convergence}


We finish this preliminary section by the tools of $\Gamma$-convergence that we will use throughout this paper. All the details can be found, for instance, in the classical Braides's book \cite{braides}. In what follows, $(X,d)$ is a metric space.

\begin{defi} Let $(F_n)_n$ be a sequence of functions $X \mapsto \bar\R$. We say that $(F_n)_n$ $\Gamma$-converges to $F$, and we write $F_n \xrightarrow[n]{\Gamma} F$ if, for any $x \in X$, we have
\begin{itemize}
\item for any sequence $(x_n)_n$ of $X$ converging to $x$,
$$ \liminf\limits_n F_n(x_n) \geq F(x) \qquad \text{($\Gamma$-liminf inequality);}$$
\item there exists a sequence $(x_n)_n$ converging to $x$ and such that
$$ \limsup\limits_n F_n(x_n) \leq F(x) \qquad \text{($\Gamma$-limsup inequality).} $$
\end{itemize} \end{defi}

This definition is actually equivalent to the following equalities for any $x \in X$:
$$ F(x) = \inf\left\{ \liminf\limits_n F_n(x_n) : x_n \to x \right\} = \inf\left\{ \limsup\limits_n F_n(x_n) : x_n \to x \right\} $$
The function $x \mapsto  \inf\left\{ \liminf\limits_n F_n(x_n) : x_n \to x \right\}$ is called $\Gamma$-liminf of the sequence $(F_n)_n$, and the other one its $\Gamma$-limsup. A useful result is the following (which, for instance, implies that a constant sequence of functions does not $\Gamma$-converge to itself in general):

\begin{prop} The $\Gamma$-liminf and the $\Gamma$-limsup of a sequence of functions $(F_n)_n$ are both lower semi-continuous on $X$. \end{prop}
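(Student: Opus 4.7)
The plan is to prove lower semi-continuity of both the $\Gamma$-liminf, which I denote $G$, and the $\Gamma$-limsup, denoted $H$, by a standard diagonal extraction argument. Fix $x \in X$ together with a sequence $(x_k)_k$ with $x_k \to x$; I must show $G(x) \leq \liminf_k G(x_k)$ and similarly for $H$. Passing to a subsequence, I may assume that $G(x_k) \to \ell := \liminf_k G(x_k)$, and $\ell$ may be taken finite (otherwise the inequality is trivial).

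For each $k$, the infimum in the definition of $G(x_k)$ furnishes a sequence $(y^k_n)_n$ in $X$ with $y^k_n \to x_k$ as $n \to \infty$ and $\liminf_n F_n(y^k_n) \leq G(x_k) + 1/k$. By the definition of $\liminf$, there must be infinitely many $n$ for which $F_n(y^k_n) \leq G(x_k) + 2/k$. I now extract diagonally: I choose a strictly increasing sequence of indices $n(k)$ satisfying both $d(y^k_{n(k)}, x_k) < 1/k$ and $F_{n(k)}(y^k_{n(k)}) \leq G(x_k) + 2/k$, which is possible since each condition is satisfied by infinitely many $n$ (so their intersection remains cofinal). Define a sequence $(z_m)_m$ by setting $z_{n(k)} = y^k_{n(k)}$ for each $k$, and by completing the remaining indices with arbitrary points converging to $x$. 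Then $z_m \to x$, because on the distinguished indices $d(z_{n(k)}, x) \leq d(y^k_{n(k)}, x_k) + d(x_k, x) \to 0$. Applying the definition of $G(x)$ to this sequence yields
\begin{equation*} G(x) \;\leq\; \liminf_m F_m(z_m) \;\leq\; \liminf_k F_{n(k)}(y^k_{n(k)}) \;\leq\; \liminf_k \bigl(G(x_k) + 2/k\bigr) \;=\; \ell. \end{equation*}

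The proof for $H$ proceeds in exactly the same way, with $\liminf_n$ replaced by $\limsup_n$ in the recovery property: the defining infimum yields $(y^k_n)_n$ converging to $x_k$ with $\limsup_n F_n(y^k_n) \leq H(x_k) + 1/k$, which implies $F_n(y^k_n) \leq H(x_k) + 2/k$ for all $n$ sufficiently large (depending on $k$). The same diagonal construction then produces $(z_m)_m$ converging to $x$ with $\limsup_m F_m(z_m) \leq \liminf_k H(x_k)$, so that $H(x) \leq \liminf_k H(x_k)$. The only delicate step in both cases is the diagonal extraction itself, where the indices $n(k)$ must be chosen large enough that $y^k_{n(k)}$ approaches $x$ while still respecting the pointwise bound on $F_{n(k)}(y^k_{n(k)})$; this is the main (and classical) obstacle, and it is resolved precisely by intersecting the two cofinal sets of admissible indices. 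Everything else is a formal consequence of the definitions.
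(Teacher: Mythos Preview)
The paper does not actually prove this proposition; it is stated as a standard fact with a reference to Braides's monograph. Your argument for the $\Gamma$-liminf $G$ is correct: the crucial point, which you use, is that $\liminf_m F_m(z_m) \leq \liminf_k F_{n(k)}(z_{n(k)})$ regardless of what happens on the non-distinguished indices, so completing the sequence with arbitrary points converging to $x$ is harmless.

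Your treatment of the $\Gamma$-limsup $H$, however, has a genuine gap. You assert that ``the same diagonal construction'' yields $\limsup_m F_m(z_m) \leq \liminf_k H(x_k)$, but this fails if the remaining indices are filled arbitrarily: the $\limsup$ over the full sequence is \emph{at least} the $\limsup$ over any subsequence, so the values $F_m(z_m)$ on the filled-in indices can drive the $\limsup$ up without any control. (For a concrete failure, take $F_m(0)=1$, $F_m(y)=0$ for $y\neq 0$, $x=0$, $x_k=1/k$; filling the gaps with $z_m=0$ gives $\limsup_m F_m(z_m)=1$ while $H(x_k)=0$.) The fix is to exploit the stronger information you already noted, namely that $F_n(y^k_n) \leq H(x_k)+2/k$ holds for \emph{all} $n$ sufficiently large. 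Choose $n(k)$ strictly increasing so that both $d(y^k_n,x_k)<1/k$ and $F_n(y^k_n)\leq H(x_k)+2/k$ hold for every $n\geq n(k)$, and then define $z_m=y^k_m$ for $n(k)\leq m<n(k+1)$ --- a block definition rather than a sparse one. With this construction every index $m\geq n(1)$ is covered, $z_m\to x$, and $F_m(z_m)\leq H(x_k)+2/k$ on each block, whence $\limsup_m F_m(z_m)\leq \ell$ as needed.
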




\noindent The main interest of $\Gamma$-convergence is its consequences in terms of convergence of minima:

\begin{theo} \label{convminima} Let $(F_n)_n$ be a sequence of functions $X \to \bar\R$ and assume that $F_n \xrightarrow[n]{\Gamma} F$. Assume moreover that there exists a compact and non-empty subset $K$ of $X$ such that
$$ \forall n\in N, \; \inf_X F_n = \inf_K F_n $$
(we say that $(F_n)_n$ is equi-mildly coercive on $X$). Then $F$ admits a minimum on $X$ and the sequence $(\inf_X F_n)_n$ converges to $\min F$. Moreover, if $(x_n)_n$ is a sequence of $X$ such that
$$ \lim_n F_n(x_n) = \lim_n (\inf_X F_n)  $$
and if $(x_{\phi(n)})_n$ is a subsequence of $(x_n)_n$ having a limit $x$, then $ F(x) = \inf_X F $. \end{theo}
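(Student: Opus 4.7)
The approach is the classical two-sided inequality for $\Gamma$-convergence: use the $\Gamma$-limsup to bound $\limsup_n \inf_X F_n$ from above by $\inf_X F$, then exploit equi-mild coercivity together with the $\Gamma$-liminf to produce a point of $X$ where $F$ attains a value not exceeding $\liminf_n \inf_X F_n$. No compactness is needed for the easy direction; the coercivity hypothesis is used precisely to go the other way.

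First I would fix $y \in X$ with $F(y) < +\infty$ (otherwise the inequality is trivial) and apply the $\Gamma$-limsup property: choose a sequence $y_n \to y$ with $\limsup_n F_n(y_n) \leq F(y)$. Since $\inf_X F_n \leq F_n(y_n)$, passing to the limsup and then taking the infimum over $y$ yields
\[
\limsup_n \, \inf_X F_n \;\leq\; \inf_X F.
\]
This step is essentially formal and uses neither compactness nor lower semi-continuity.

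The core step uses the equi-mild coercivity. For each $n$, the equality $\inf_X F_n = \inf_K F_n$ allows me to select $x_n \in K$ with $F_n(x_n) \leq \inf_X F_n + 1/n$. By compactness of $K$, a subsequence $x_{\phi(n)}$ converges to some $x \in K$. Applying the $\Gamma$-liminf inequality to this subsequence gives
\[
F(x) \;\leq\; \liminf_n F_{\phi(n)}(x_{\phi(n)}) \;=\; \liminf_n \, \inf_X F_{\phi(n)} \;\leq\; \liminf_n \, \inf_X F_n,
\]
so that $\inf_X F \leq F(x) \leq \liminf_n \inf_X F_n$. Combining with the previous step, every inequality is an equality: $F$ attains its infimum at $x$, and $\inf_X F_n \to \min_X F$.

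Finally, for the statement about an arbitrary minimizing sequence $(x_n)$: by the first part, $\lim_n \inf_X F_n = \min_X F$, so $F_n(x_n) \to \min_X F$. If a subsequence $x_{\phi(n)}$ converges to $x$, the $\Gamma$-liminf inequality gives $F(x) \leq \liminf_n F_{\phi(n)}(x_{\phi(n)}) = \min_X F$, hence $F(x) = \min_X F$. The only delicate point in the whole argument is making sure one works with the subsequence indexed by $\phi$ when applying the $\Gamma$-liminf inequality (which is stated for sequences, but the property is stable under extraction since any subsequence of $F_n$ still $\Gamma$-converges to $F$); this is the step most likely to hide a notational slip, but there is no real mathematical obstacle.
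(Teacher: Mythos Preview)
Your argument is the standard one and is essentially correct; note that the paper does not actually prove this theorem but simply recalls it from Braides's book, so there is no ``paper's own proof'' to compare against.

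There is, however, one genuine slip in your chain of inequalities. You write
\[
\liminf_n \, \inf_X F_{\phi(n)} \;\leq\; \liminf_n \, \inf_X F_n,
\]
but for an arbitrary subsequence the inequality goes the \emph{other} way: $\liminf_n a_{\phi(n)} \geq \liminf_n a_n$. The fix is immediate: either first extract a subsequence along which $\inf_X F_{\phi(n)}$ converges to $\liminf_n \inf_X F_n$ and \emph{then} use compactness of $K$ to extract a further convergent subsequence of the near-minimizers; or observe that your chain already gives $\inf_X F \leq F(x) \leq \liminf_n \inf_X F_{\phi(n)} \leq \limsup_n \inf_X F_n \leq \inf_X F$ (using your first step for the last inequality), which forces all terms to be equal and in particular shows that every subsequential limit of $(\inf_X F_n)$ equals $\inf_X F$. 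Either route closes the gap with no new ideas.
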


\noindent We finish with the following result, which allows to focus on the $\Gamma$-limsup inequality only on a dense subset of $X$ under some assumptions:

\begin{prop} \label{gammadense} Let $(F_n)_n$ be a sequence of functionals and $F$ be a functional $X \to \bar\R$. Assume that there exists a dense subset $Y \subset X$ such that:
\begin{itemize}
\item for any $x \in X$, there exists a sequence $(x_n)_n$ of $Y$ such that $x_n \to x$ and $F(x_n) \to F(x)$;
\item the $\Gamma$-limsup inequality holds for any $x \in Y$.
\end{itemize}
Then it holds for any $x$ belonging to the whole $X$. \end{prop}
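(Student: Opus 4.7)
The proof is a standard diagonal argument, the kind which is ubiquitous in $\Gamma$-convergence theory. The plan is as follows: fix $x \in X$, and construct a recovery sequence $(y_n)_n$ in $X$ with $y_n \to x$ and $\limsup_n F_n(y_n) \leq F(x)$, using the two hypotheses to trade off approximation of $x$ (via $Y$) against recovery sequences (valid inside $Y$).

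First I would invoke the first hypothesis to produce a sequence $(x_k)_k \subset Y$ with $x_k \to x$ and $F(x_k) \to F(x)$. Then, since the $\Gamma$-limsup inequality holds at every $x_k \in Y$, for each $k$ there exists a sequence $(x_{k,n})_n \subset X$ such that $x_{k,n} \to x_k$ as $n \to \infty$ and $\limsup_n F_n(x_{k,n}) \leq F(x_k)$. This gives a doubly-indexed array whose ``column limits'' tend to $F(x)$ and whose entries $x_{k,n}$ cluster at $x_k$, which itself tends to $x$.

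The third step is a diagonal extraction. For each fixed $k$, one can find an index $N(k)$ such that for all $n \geq N(k)$, both $d(x_{k,n}, x_k) \leq 1/k$ and $F_n(x_{k,n}) \leq F(x_k) + 1/k$ (using the definition of $\limsup$ for the latter). Setting $k(n)$ to be the largest $k$ with $N(k) \leq n$ (so $k(n) \to \infty$), I would define $y_n := x_{k(n),n}$. Then $d(y_n, x) \leq d(x_{k(n),n}, x_{k(n)}) + d(x_{k(n)}, x) \leq 1/k(n) + d(x_{k(n)}, x) \to 0$, and $\limsup_n F_n(y_n) \leq \limsup_n (F(x_{k(n)}) + 1/k(n)) = F(x)$, which is exactly the desired $\Gamma$-limsup inequality at $x$.

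The only mildly delicate point is making the diagonal extraction rigorous — this is really just the classical lemma that a doubly-indexed sequence $(a_{k,n})$ with $\lim_n a_{k,n} = a_k$ and $\lim_k a_k = a$ admits an extraction $n \mapsto k(n) \to \infty$ with $\lim_n a_{k(n),n} = a$, applied simultaneously to distances and functional values. No genuine obstacle arises beyond bookkeeping: the hypothesis of ``recovery in energy by elements of $Y$'' is precisely what compensates for the fact that $F$ itself need not be lower semicontinuous, so that chaining two approximations (from $X$ to $Y$ via the first bullet, then from $Y$ to a recovery sequence via the second) produces a valid recovery sequence from $X$.
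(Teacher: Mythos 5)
Your argument is correct: this is the standard diagonal (or ``density'') argument for extending the $\Gamma$-limsup inequality from a dense class with energy approximation, and the paper itself states Proposition \ref{gammadense} without proof, referring to Braides's book, so there is no authorial proof to diverge from. The one point worth making explicit in a written-up version is the bookkeeping you flag yourself: choose $N(k)$ strictly increasing so that $k(n)=\max\{k : N(k)\le n\}$ is well defined and tends to $+\infty$, and handle the trivial cases $F(x)=+\infty$ (nothing to prove) and $F(x_k)=\pm\infty$ (replace $F(x_k)+1/k$ by a level $-k$ when $F(x_k)=-\infty$) separately, since the functionals take values in $\bar\R$. With those remarks the proof is complete.
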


\section{Generalities and density of Sobolev transport maps}

In what follows, we consider two regular enough domains $\Omega$, $\Omega'$ and two measures $\mu \in \mathcal{P}(\Omega)$, $\nu \in \mathcal{P}(\Omega')$ with positive and bounded from below densities $f$, $g$; we assume moreover that the class of maps $T \in H^1(\Omega)$ sending $\mu$ onto $\nu$ is non-empty (for instance, the assumptions of the Dacorogna-Moser's result are enough). The functional that we will study is defined, for $\eps > 0$, by
$$ J_\eps \,:\, T \mapsto \int_\Omega |T(x)-x|\dx\mu(x) + \eps \int_\Omega |DT(x)|^2 \dx x $$
and we denote by $J$ the corresponding functional when $\eps =0$ (which is, thus, the classical Monge's transport energy); moreover, we extend $J_\eps$, $J$ to the whole $L^2(\Omega)$ by setting $J_\eps(T)=J(T)=+\infty$ for a map $T$ which is not a transport map from $\mu$ to $\nu$.

As usual in transport theory, we consider as a setting for our variational problems  the set of transport plans $\gamma$ which are probabilities on the product space $\Omega\times\Omega'$ with given marginals $(\pi_x)_\#\gamma=\mu$ and $(\pi_y)_\#\gamma=\nu$ and all the $\Gamma$-limits that we consider in that follows are considered with respect to the weak convergence of plans as probability measures. However, due to our choices of the functionals that we minimize, most of the transport plan that we consider will be actually induced by transport maps, {\it i.e.}~$\gamma_T = (\id\times T)_\#\mu$ with $T_\#\mu=\nu$. These maps are valued in $\Omega'$, which is bounded, and are hence bounded. We could also consider different notions of convergence, in particular based on the pointwise convergence of these plans, and we will actually do it often. For simplicity, we will use the convergence in $L^2(\Omega;\Omega')$ (but, since these functions are bounded, this is equivalent to any other $L^p$ convergence with $p<\infty$).  As the following lemma (which will be also technically useful later) shows, this convergence is equivalent to the weak convergence in the sense of measures of the transport plans:

\begin{lem} \label{plansandmaps} Assume that $\Omega$, $\Omega'$ are compact domains and $\mu$ is a finite non-negative measure on $\Omega$. Let $(T_n)_n$ be a sequence of maps $\Omega \to \Omega'$. Assume that there exists a map $T$ such that $\gamma_{T_n} \rightharpoonup \gamma_T$ in the weak sense of measures. Then $T_n \to T$ in $L^2_\mu(\Omega)$.

Conversely, if $T_n \to T$ in $L^2_\mu(\Omega)$, then we have $\gamma_{T_{n}} \rightharpoonup \gamma_T$ 
in the weak sense of measures. \end{lem}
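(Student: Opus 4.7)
The plan is to treat the two directions separately, with the non-trivial direction being the implication from weak convergence of plans to $L^2$ convergence of maps. The key technical device is to reduce to a \emph{continuous} test function on $\Omega\times\Omega'$ that plays the role of $|y-T(x)|^2$, which we cannot use directly because $T$ is merely measurable.

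For the first direction, assume $\gamma_{T_n}\rightharpoonup \gamma_T$ weakly. Fix $\delta>0$. By Lusin's theorem applied to the measurable map $T:\Omega\to\Omega'$, one can find a compact set $K\subset \Omega$ with $\mu(\Omega\setminus K)<\delta$ such that $T|_K$ is continuous; by Tietze's extension theorem I extend it to a continuous map $\tilde T : \Omega \to \R^d$ with values in a fixed bounded set containing $\Omega'$. Then the function $\phi(x,y) := |y-\tilde T(x)|^2$ is continuous and bounded on the compact set $\Omega\times\Omega'$, so testing the weak convergence against $\phi$ yields
$$\int_\Omega |T_n(x)-\tilde T(x)|^2 \dx\mu(x) \longrightarrow \int_\Omega |T(x)-\tilde T(x)|^2 \dx\mu(x).$$
Since $\tilde T=T$ on $K$, the right-hand side is bounded by $C\delta$, where $C$ depends only on $\operatorname{diam}(\Omega')$. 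The triangle inequality then gives
$$\limsup_{n\to\infty} \int_\Omega |T_n-T|^2 \dx\mu \le 2 \limsup_{n\to\infty} \int_\Omega |T_n-\tilde T|^2 \dx\mu + 2\int_\Omega |T-\tilde T|^2 \dx\mu \le 4C\delta,$$
and letting $\delta\to 0$ concludes the proof.

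For the converse, let $\phi:\Omega\times\Omega'\to \R$ be continuous and bounded; since $\Omega\times\Omega'$ is compact, $\phi$ is uniformly continuous. If $T_n\to T$ in $L^2_\mu(\Omega)$, then $T_n\to T$ in $\mu$-measure, so from every subsequence one can extract a further subsequence $T_{n_k}$ converging $\mu$-a.e.\ to $T$. By continuity of $\phi$, $\phi(x,T_{n_k}(x))\to \phi(x,T(x))$ for $\mu$-a.e.\ $x$, and by dominated convergence (the integrands being uniformly bounded by $\|\phi\|_\infty$),
$$\int_\Omega \phi(x,T_{n_k}(x))\dx\mu(x) \longrightarrow \int_\Omega \phi(x,T(x))\dx\mu(x).$$
Since the limit is independent of the chosen subsequence, the whole sequence converges, proving $\gamma_{T_n}\rightharpoonup \gamma_T$.

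The only real subtlety is in the first direction: one must replace the discontinuous ``ideal'' test function $|y-T(x)|^2$ by a genuinely continuous one, which is where Lusin's theorem enters. The rest is a standard approximation argument combined with dominated convergence along subsequences.
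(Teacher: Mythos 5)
Your proof is correct, but the first (and harder) direction takes a genuinely different route from the paper's. The paper tests the weak convergence of plans against the functions $(x,y)\mapsto \phi(x)\cdot y$ with $\phi\in C_b(\Omega)$ to get weak convergence $T_n\rightharpoonup T$ in $L^2_\mu$, then against $(x,y)\mapsto|y|^2$ to get $\|T_n\|_{L^2_\mu}\to\|T\|_{L^2_\mu}$, and concludes strong convergence from the Hilbert-space fact that weak convergence plus convergence of norms implies strong convergence. You instead build a continuous surrogate $\tilde T$ for $T$ via Lusin and Tietze, test against the genuinely continuous function $|y-\tilde T(x)|^2$, and close with a triangle inequality and $\delta\to 0$. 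Both arguments are sound and both use the same ingredients in the background (finiteness of $\mu$, boundedness of $\Omega'$, regularity of Borel measures on compact metric spaces). The paper's version is shorter and exploits the Hilbert structure, though it silently uses the density of $C_b(\Omega)$ in $L^2_\mu$ together with the uniform $L^\infty$ bound on the $T_n$ to upgrade testing against continuous $\phi$ to genuine weak $L^2_\mu$ convergence; your version avoids that density step and the Radon--Riesz property, is quantitative in $\delta$, and transfers verbatim to $L^p_\mu$ for any $1\le p<\infty$. Your treatment of the converse direction (a.e.\ convergence along subsequences, dominated convergence, uniqueness of the limit) is identical to the paper's.
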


\begin{proof} If $\gamma_{T_n} \rightharpoonup \gamma_T$ and $\phi \in C_b(\Omega)$ is a vector-valued function, we have
$$\int_\Omega \phi(x) \cdot T_n(x) \dx\mu(x) = \int_{\Omega\times \Omega'} \phi(x) \cdot y \dx\gamma_{T_n}(x,y) \to \int_{\Omega\times \Omega'}\phi(x)\cdot y \dx\gamma_T(x,y) = \int_{\Omega} \phi(x) \cdot T(x) \dx\mu(x) $$
which proves that $T_n \rightharpoonup T$ weakly in $L^2_\mu(\Omega)$. On the other hand,
$$ \int_\Omega |T_n(x)|^2 \dx\mu(x) = \int_{\Omega\times\Omega'} |y|^2 \dx\gamma_{T_n}(x,y) \to \int_{\Omega\times\Omega'} |y|^2\dx\gamma_T(x,y) = \int_\Omega |T(x)|^2 \dx\mu(x) $$
$$ \text{thus} \qquad ||T_n||_{L^2_\mu} \to ||T||_{L^2_\mu} $$
and the convergence $T_n \to T$ is actually strong.

Conversely, assume that $T_n \to T$ in $L^2_\mu(\Omega)$ and let $(n_k)_k$ be such that the convergence $T_{n_k}(x) \to T(x)$ holds for a.e.\@~$x\in\Omega$, then for any $\phi \in C_b(\Omega\times\Omega')$ we have
$$ \int_{\Omega\times\Omega'} \phi(x,y) \dx\gamma_{T_{n_k}}(x,y) = \int_\Omega \phi(x,T_{n_k}(x)) \dx\mu(x) \to \int_\Omega \phi(x,T(x))\dx\mu(x) = \int_{\Omega\times\Omega'} \phi(x,y) \dx\gamma_T(x,y). $$
This proves $\gamma_{T_{n_k}} \rightharpoonup \gamma_T$, but, the limit being independent of the subsequence, we easily get the convergence of the whole sequence.
\end{proof}

\noindent Since the set of transport plans between $\mu$ to $\nu$ is compact for the weak topology in the set of measures on $\Omega\times\Omega'$, a consequence of Lemma \ref{plansandmaps} is that the equi-coercivity needed in Theorem~\ref{convminima} will be satisfied in all the $\Gamma$-convergence results that follows. Therefore, we will not focus on it anymore and still will consider that these results imply the convergence of minima and of minimizers.

\subsection{Statements of the zeroth and first order $\Gamma$-convergences}

\paragraph{Zeroth order $\Gamma$-limit.} The first step consists in checking that $J_\eps \gammato J$. Here we must consider that $J_\eps$ is extended to transport plan by setting $+\infty$ on those transport plans which are not of the form $\gamma=\gamma_T$ for $T\in H^1$, and that $J$ is defined as usual as $J(\gamma)=\int |x-y|\dx\gamma$ for transport plans. The proof of the $\Gamma$-convergence uses Theorem~\ref{denssobolev} on density of Sobolev transports, that we prove below in Paragraph~\ref{proofdensity}. This density result holds for H\"older and bounded from below densities, and for a large class of domains that we define as follows:

\begin{defi} \label{lipdom} We call {\em Lipschitz polar domain} any open bounded subset $\Omega$ of $\R^d$ having form
$$ \Omega = \left\{ x \in \R^d \,:\, |x-x_0| < \gamma\left( \frac{x-x_0}{|x-x_0|}\right) \right\} $$
for some $x_0 \in \Omega$ and a Lipschitz function $\gamma:S^{d-1}\to (0,+\infty)$. In particular, such a domain $\Omega$ is star-shaped with Lipschitz boundary. \end{defi}

\begin{prop}[Zeroth order $\Gamma$-limit] \label{zerogammalim} Assume that $\Omega$, $\Omega'$ are both Lipschitz polar domains and that $f$, $g$ are both $C^{0,\alpha}$ and bounded from below. Then $J_\eps \gammato J$ as $\eps \to 0$. \end{prop}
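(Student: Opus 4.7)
The plan is to verify the two $\Gamma$-convergence inequalities separately, reducing the nontrivial limsup direction to a standard application of Proposition \ref{gammadense} combined with the Sobolev density theorem \ref{denssobolev}.

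For the $\Gamma$-liminf, I would first observe that $J_\eps \geq J$ pointwise on transport plans: on plans not of the form $\gamma_T$ with $T \in H^1$ we have $J_\eps = +\infty$ by definition, while on such $\gamma_T$ we have $J_\eps(\gamma_T) - J(\gamma_T) = \eps \int_\Omega |DT|^2 \geq 0$. Since $(x,y) \mapsto |y-x|$ is continuous and bounded on the compact set $\bar\Omega \times \overline{\Omega'}$, the functional $\gamma \mapsto J(\gamma) = \int |y-x|\dx\gamma$ is in fact continuous (not merely lower semi-continuous) for the weak convergence of plans. Hence for any $\gamma_\eps \rightharpoonup \gamma$ one immediately gets $\liminf_{\eps \to 0} J_\eps(\gamma_\eps) \geq \liminf_{\eps \to 0} J(\gamma_\eps) = J(\gamma)$.

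For the $\Gamma$-limsup, I would apply Proposition \ref{gammadense} with the dense subset $Y = \{\gamma_T \,:\, T \in H^1(\Omega),\ T_\#\mu = \nu\}$. On $Y$ the recovery sequence is trivial: for $T \in H^1$, take $T_\eps \equiv T$, so $\gamma_{T_\eps} = \gamma_T$ and $J_\eps(T) = J(T) + \eps \int_\Omega |DT|^2 \to J(T)$ as $\eps \to 0$. The continuity of $J$ noted above guarantees the $F(x_n) \to F(x)$ requirement of Proposition \ref{gammadense} as soon as one has \emph{weak} density of $Y$ in the set of all transport plans from $\mu$ to $\nu$.

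The only nontrivial point is thus to establish this density, which I would do in two steps. First, since $\mu$ is atomless, the plans $\gamma_T$ induced by an arbitrary Borel transport map are weakly dense in the full set of transport plans between $\mu$ and $\nu$, a classical consequence of the rearrangement/discretisation arguments of Ambrosio--Pratelli. Second, within the class of transport maps, Theorem \ref{denssobolev} (to be proved in the next paragraph) provides $L^2(\Omega)$-density of Lipschitz, hence Sobolev, transport maps, and Lemma \ref{plansandmaps} converts this $L^2$-density into weak density of the associated plans. A diagonal extraction combines the two steps. The main obstacle is concentrated entirely in Theorem \ref{denssobolev}; once it is available the present proposition follows essentially mechanically from Proposition \ref{gammadense}.
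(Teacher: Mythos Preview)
Your proof is correct and follows essentially the same route as the paper's: the $\Gamma$-liminf is obtained from $J_\eps\geq J$ together with the continuity of $J$ for the weak convergence of plans, and the $\Gamma$-limsup is reduced via Proposition~\ref{gammadense} to the density of Sobolev transport maps (Theorem~\ref{denssobolev}), combined with the well-known density of maps among plans. You have merely spelled out explicitly the two-step density argument and the invocation of Lemma~\ref{plansandmaps} that the paper leaves implicit.
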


\begin{proof} The $\Gamma$-liminf inequality is trivial (we have $J_\eps \geq J$ by definition, and $J$ is continuous for the weak convergence of plans), and the $\Gamma$-limsup inequality is a direct consequence of the Prop.\@ \ref{gammadense} and of the density of the set of Sobolev transports for the $L^2$-convergence. \end{proof}

\paragraph{First order $\Gamma$-limit.} We state it as follows, with this time a short proof:

\begin{prop}[First order $\Gamma$-limit] \label{gammalim} Assume simply that $f$, $g$ are bounded from below on $\Omega$, $\Omega'$ and that $\Omega$ has Lipschitz boundary. Then the functional $\dfrac{J_\eps-W_1}{\eps}$ $\Gamma$-converges, when $\eps\to 0$, to
$$ \mathcal H : T \mapsto\begin{cases}\int_\Omega |DT(x)|^2 \dx x & \text{if } T \in \O_1(\mu,\nu)\cap H^1(\Omega) \\ +\infty & \text{otherwise}, \end{cases}$$ 
where, again, $\mathcal H$ is extend to plans which are not induced by maps by $+\infty$. \end{prop}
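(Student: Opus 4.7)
The plan is to prove the $\Gamma$-limsup inequality essentially by using the constant recovery sequence, and the $\Gamma$-liminf inequality by a direct decomposition into the Monge part and the Dirichlet part, using the weak lower semicontinuity of each. I expect no serious obstacle: this is exactly why the authors call the proof ``almost trivial''. The main conceptual point is to use Lemma \ref{plansandmaps} so that $L^2$-convergence and weak plan convergence can be used interchangeably, in order to transfer the constraint $T_\#\mu=\nu$ and the optimality $J(T)=W_1$ to the limit.

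For the $\Gamma$-limsup inequality at a given $T\in L^2(\Omega;\Omega')$, I would simply pick the \emph{constant} sequence $T_\eps\equiv T$. If $\mathcal{H}(T)=+\infty$ there is nothing to prove, so assume $T\in\O_1(\mu,\nu)\cap H^1(\Omega)$; then $J(T)=W_1$ and
$$
\frac{J_\eps(T)-W_1}{\eps}=\frac{J(T)-W_1}{\eps}+\int_\Omega|DT|^2 \,\dx x=\int_\Omega|DT|^2\,\dx x=\mathcal H(T),
$$
so $\limsup_\eps F_\eps(T)=\mathcal H(T)$, giving equality.

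For the $\Gamma$-liminf inequality, I would take any sequence $T_\eps\to T$ in $L^2(\Omega)$ and assume without loss of generality that $\liminf_\eps F_\eps(T_\eps)<+\infty$, since the opposite case is trivial. Passing to a subsequence realizing the liminf, each $T_\eps$ must be an $H^1$ transport map from $\mu$ to $\nu$ (otherwise $J_\eps(T_\eps)=+\infty$), and the key decomposition is
$$
F_\eps(T_\eps)=\frac{J(T_\eps)-W_1}{\eps}+\int_\Omega|DT_\eps|^2\,\dx x,
$$
where both summands are non-negative (the first by optimality of $W_1$). In particular, the Dirichlet term $\int_\Omega|DT_\eps|^2$ stays bounded, and $J(T_\eps)\to W_1$ at rate $O(\eps)$.

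It remains to identify $T$ and bound $\mathcal H(T)$ from above. By Lemma \ref{plansandmaps}, the $L^2$-convergence $T_\eps\to T$ implies weak convergence of the induced plans $\gamma_{T_\eps}\rightharpoonup\gamma_T$, which passes the marginal constraint to the limit so that $T_\#\mu=\nu$. The functional $J$ is continuous with respect to $L^1_\mu$ (and hence $L^2_\mu$) convergence because $|\,|T_\eps-x|-|T-x|\,|\le|T_\eps-T|$, so $J(T)=\lim_\eps J(T_\eps)=W_1$, giving $T\in\O_1(\mu,\nu)$. The uniform $H^1$-bound on $(T_\eps)$ combined with the strong $L^2$-convergence to $T$ yields $T\in H^1(\Omega)$ with $\int_\Omega|DT|^2\le\liminf_\eps\int_\Omega|DT_\eps|^2$ by weak lower semicontinuity of the Dirichlet energy. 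Chaining these gives
$$
\mathcal H(T)=\int_\Omega|DT|^2\,\dx x \le \liminf_{\eps\to 0}\int_\Omega|DT_\eps|^2\,\dx x \le \liminf_{\eps\to 0}F_\eps(T_\eps),
$$
which is the required inequality and completes the proof.
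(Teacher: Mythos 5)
Your proof is correct and follows essentially the same route as the paper: the constant recovery sequence for the $\Gamma$-limsup, and for the $\Gamma$-liminf the decomposition of $\frac{J_\eps(T_\eps)-W_1}{\eps}$ into the non-negative Monge excess and the Dirichlet term, combined with the $L^2$-continuity of the Monge cost (to get $T\in\O_1(\mu,\nu)$) and weak $H^1$ lower semicontinuity of $\int_\Omega|DT|^2$. Your write-up is in fact slightly more careful than the paper's, in that it makes explicit the transfer of the constraint $T_\#\mu=\nu$ via Lemma \ref{plansandmaps} and the identification of the weak $H^1$ limit with $T$.
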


\begin{proof}
{\it $\Gamma$-limsup inequality.} 
If $T \in \O_1(\mu,\nu)$, then by choosing $T_\eps = T$ for any $\eps$ we obtain automatically $\dfrac{J_\eps(T_\eps)-W_1}{\eps} = \int_\Omega |DT|^2$ for each $\eps$. It remains to show that if $T \notin \O_1(\mu,\nu)$, then we have $\dfrac{J_\eps(T_\eps)-W_1}{\eps} \to +\infty$ for any sequence $(T_\eps)_\eps$ converging to $T$; but since the map
$$ T \mapsto \int_\Omega |T(x)-x| \dx\mu(x) $$
is continuous for the $L^2$-convergence, we have for such a $(T_\eps)_\eps$
$$ \liminf\limits_\eps \frac{J_\eps(T_\eps)-W_1}{\eps} \geq \liminf\limits_\eps\frac{1}{\eps} \left(\int_\Omega |T(x)-x| \dx\mu(x) - W_1\right) $$
which is $+\infty$ since $T$ is not optimal for the Monge problem. 

 {\it $\Gamma$-liminf inequality.} We can concentrate on sequence of maps $T_\eps$ with equibounded values for $\dfrac{J_\eps(T_\eps)-W_1}{\eps}$, which provides a bound on $\int_\Omega |DT_{\eps}|^2 $. 
Assuming the liminf to be finite, from 
$$C\geq  \frac{J_{\eps}(T_{\eps})-W_1}{\eps} = \frac{1}{\eps} \left(\int_\Omega |T_{\eps}(x)-x|\dx\mu(x)-W_1\right) + \int_\Omega |DT_{\eps}|^2 \geq \int_\Omega |DT_{\eps}|^2 $$
we deduce as above that $T$ must belong to $ \O_1(\mu,\nu)$ and, since the last term is lower semi-continuous with respect to the weak convergence in $H^1(\Omega)$ (which is guaranteed up to subsequences since $(J_{\eps_k}(T_{\eps_k}))_k$ is bounded) we get the inequality we look for.\end{proof}

\noindent Since the $\Gamma$-convergence implies the convergence of minima, we then have
$$ \inf J_\eps = \inf J + \eps \inf \mathcal H + o(\eps) = W_1 + \eps \inf \mathcal H + o(\eps)$$
provided that the infimum is finite, which means that there exists at least one transport optimal map~$T$ also belonging to the Sobolev space $H^1(\Omega)$. In the converse case, and under the assumptions of the zeroth order $\Gamma$-convergence, we have
$$ \inf J_\eps \to W_1 \qquad \text{and} \qquad \frac{\inf J_\eps-W_1}{\eps} \to +\infty $$
which means that the lowest order of convergence of $\inf J_\eps$ to $J$ is smaller as $\eps$. The study of a precise example where this order is $\eps|\log\eps|$ is the object of Section \ref{sectionepslogeps}. \bigskip

\paragraph{What about the selected map ?} The first-order $\Gamma$-convergence and the basic properties of $\Gamma$-limits imply that, if $T_\eps$ minimizes $J_\eps$, then $T_\eps \to T$ which minimizes the Sobolev norm among the set $\O_1(\mu,\nu)$ of optimal transport maps from $\mu$ to $\nu$. This gives a selection principle, {\it via} a secondary variational problem (minimizing something in the class of minimizers), in the same spirit of what we presented for the monotone transport map along each transport ray. A natural question is to find which is this new ``special'' selected map, and whether it can coincide with the monotone one. Thanks to the non-optimality results of this map for the Sobolev cost on the real line, the answer is that they are in general  different. We can look at the following explicit counter-example (where we have however $\O_1(\mu,\nu)\cap H^1(\Omega) \neq \emptyset$).

 Let us set $ \Omega = (0,1)^2$, $\Omega' = (2,3)\times(0,1)$ in $\R^2$. Let $F$, $G$ be two probability densities on the real line, supported in $(0,1)$ and $(2,3)$ respectively; we now consider the densities defined by
$$ f(x_1,x_2)=F(x_1) \qquad \text{and} \qquad g(x_1,x_2)=G(x_1)$$
Then, if $t$ is a transport map from $F$ to $G$ on the real line and $T(x_1,x_2) = (t(x_1),x_2)$, it is easy to check that $T$ sends the density $f$ onto $g$ and if $u(x_1,x_2) = x_1$ we have
$$ |T(x)-x| = |t(x_1)-x_1| = t(x_1)-x_1 = u(T(x_1,x_2))-u(x_1,x_2) $$
This proves that $T$ is optimal and $u$, which is of course 1-Lipschitz, is a Kantorovich potential, so that the maximal transport rays are exactly the segments $[0,3]\times\{x_2\}$, $0 < x_2 < 1$. As a consequence,
$$ T \in \O_1(\mu,\nu) \qquad \iff \qquad T(x_1,x_2) = (t(x_1,x_2),x_2) \quad \text{with} \quad \begin{array}{c} t(\cdot,x_2)_\# F = G \\ \text{for a.e.}~x_2 \end{array} $$
In particular, the monotone transport map along the maximal transport rays is $x \mapsto (t(x_1),x_2)$, where $t$ is the non-decreasing transport map from $F$ to $G$ on the real line. For this transport map $T$, we have
$$ \int_\Omega |DT(x)|^2 \dx x = \int_0^1 t'(x_1)^2 \dx x_1 $$
Now, one can choose $F$, $G$ such that the solution of
\begin{equation} \inf\left\{ \int_0^1 U'(x_1)^2 \dx x_1 \;:\; U_\#F = G \right\} \label{t1d} \end{equation}
is not attained by the increasing transport map from $F$, to $G$. Thus, if $\tilde{t}$ minimizes \eqref{t1d} and $\tilde{T}(x_1,x_2) = \tilde{t}(x_1)$, we have
$$ \int_\Omega |D\tilde{T}(x)|^2 \dx x = \int_0^1 \tilde{t}'(x_1)^2 \dx x_1 < \int_0^1 t'(x_1)^2 \dx x_1 = \int_\Omega |DT(x)|^2 \dx x $$
and $\tilde{T}$ is also an optimal transport map for the Monge problem. 

\subsection{Proof of the density of Lipschitz transports \label{proofdensity}}

In this section, we focus on the proof of the density of the set of Lipschitz transport maps in the set of all the transport maps. Before starting the proof of this result, let us give some quick comments on it.

First of all, for the sake of the applications to the zero-th order $\Gamma$-convergence of the previous section, we only need the density of those maps belonging to the Sobolev space $H^1(\Omega)$; also, we needed density in the set of plans, but since it is well known that transport maps are dense in the set of transport plans, for simplicity we will prove density in the set of maps. Concerning the assumptions on the measure to be H\"older and bounded from above and below, we notice that they are also used in several classical results on regularity of transport maps ({\it cf.}~the Caffarelli's regularity theory, or even the Dacorogna and Moser's result above).

Finally, the assumptions on the domains are only needed for technical reasons; basically, they allow to send them onto the unit ball through a Lipschitz diffeomorphism. This is the aim of the following lemma, that we will use several times:

\begin{lem} \label{lemdiffeo} If $U$ is a Lipschitz polar domain with
$$ U = \left\{ x \in \R^d \,:\, |x-x_0| < \gamma\left( \frac{x-x_0}{|x-x_0|}\right) \right\} $$
then there exists a map $\alpha:U\to B(0,1)$ such that:
\begin{itemize}
\item $\alpha$ is a bi-Lipschitz diffeomorphism from $U$ to $B(0,1)$;
\item $\det D\alpha$ is Lipschitz and bounded from below (thus, $\det D\alpha^{-1}$ is also Lipschitz);
\item for any $x \neq x_0$, $\dfrac{\alpha(x)}{|\alpha(x)|} = \dfrac{x-x_0}{|x-x_0|}$
\end{itemize} \end{lem}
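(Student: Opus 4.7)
After translating I would assume $x_0=0$. Setting $R = \max_\sigma \gamma(\sigma)$ and replacing $U$ by $U/R$ and $\gamma$ by $\gamma/R$, I may moreover assume $\gamma(\sigma)\leq 1$ for every $\sigma$: a final composition of the map built below with the homothety $y\mapsto y/R$ only multiplies the Jacobian by the positive constant $R^{-d}$ and preserves each of the three claimed properties. The construction is then done in polar coordinates as $\alpha(r\sigma)=\phi(r,\sigma)\sigma$, with $\phi$ chosen to be the identity in $r$ near the origin and the pure radial rescaling $r/\gamma(\sigma)$ near $\partial U$. Concretely I fix $\delta\in(0,\tfrac{1}{2}\min\gamma)$ and a smooth non-decreasing cutoff $\psi:[0,+\infty)\to[0,1]$ with $\psi\equiv 0$ on $[0,\delta/2]$, $\psi\equiv 1$ on $[\delta,+\infty)$, and $\psi'(\delta/2)=\psi'(\delta)=0$, and set
$$\phi(r,\sigma)=r\Bigl[1+\psi(r)\Bigl(\frac{1}{\gamma(\sigma)}-1\Bigr)\Bigr].$$
This gives $\phi(0,\sigma)=0$, $\phi(\gamma(\sigma),\sigma)=1$, $\phi(r,\sigma)=r$ for $r\leq\delta/2$, and the required radial preservation $\alpha(x)/|\alpha(x)|=x/|x|$ is automatic.

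For the bi-Lipschitz claim, $\alpha$ coincides with the identity on $\{|x|\leq\delta/2\}$ and with $x\mapsto x/\gamma(x/|x|)$ on $\{|x|\geq\delta\}$; this latter is Lipschitz there because $\sigma(x)=x/|x|$ is Lipschitz on any set bounded away from the origin, $\gamma$ is Lipschitz on $S^{d-1}$, and $\gamma$ is bounded below. The elementary inequality $|x/|x|-y/|y||\leq 2|x-y|/\min(|x|,|y|)$ glues the three pieces into a globally Lipschitz map on $U$. Since $\gamma\leq 1$, one also has $\partial_r\phi=1+(\psi(r)+r\psi'(r))(1/\gamma(\sigma)-1)\geq 1$, so $r\mapsto\phi(r,\sigma)$ is strictly increasing from $0$ to $1$ for every $\sigma$; hence $\alpha$ is a ray-preserving bijection onto $B(0,1)$, and $\alpha^{-1}$, having the analogous near-origin/near-boundary structure, is Lipschitz by the same argument.

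The Jacobian is computed in polar coordinates as
$$\det D\alpha(r\sigma)=\partial_r\phi(r,\sigma)\cdot\Bigl(\frac{\phi(r,\sigma)}{r}\Bigr)^{d-1},$$
which equals $1$ on $\{|x|\leq\delta/2\}$, equals $\gamma(x/|x|)^{-d}$ on $\{|x|\geq\delta\}$, and on the transition shell $\{\delta/2\leq|x|\leq\delta\}$ is a polynomial expression in the smooth Lipschitz quantities $\psi(|x|)$, $|x|\psi'(|x|)$ and $1/\gamma(\sigma(x))$. The endpoint conditions $\psi'(\delta/2)=\psi'(\delta)=0$ ensure continuity of this polar-coordinate formula across both gluings, so combining the piecewise Lipschitz bounds I get $\det D\alpha\in\Lip(U)$; the bound $\gamma\leq 1$ keeps each factor $\geq 1$, hence $\det D\alpha\geq 1$ throughout. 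The Lipschitz regularity of $\det D\alpha^{-1}=1/(\det D\alpha\circ\alpha^{-1})$ then follows from the chain rule, and unwinding the initial rescaling by $R$ yields the lemma in its original form.

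The only really delicate point is the lower bound on $\det D\alpha$ in the transition shell: without the preliminary rescaling to $\gamma\leq 1$, the sign of $1/\gamma-1$ would be uncontrolled and the cross-term $(\psi(r)+r\psi'(r))(1/\gamma(\sigma)-1)$ appearing in $\partial_r\phi$ could drive the Jacobian to zero whenever $\max\gamma/\min\gamma$ is large, forcing a much finer choice of $\psi$. The initial scaling step avoids this difficulty entirely and makes the remainder of the verification essentially mechanical.
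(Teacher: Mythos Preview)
Your proof is correct and follows the same overall strategy as the paper: reduce to $x_0=0$ and $\gamma\leq 1$, then build a ray-preserving map $\alpha(r\sigma)=\phi(r,\sigma)\sigma$ equal to the identity near the origin and stretching rays so as to send $\partial U$ onto the unit sphere, with the polar Jacobian formula $\det D\alpha=\partial_r\phi\,(\phi/r)^{d-1}$ controlling the regularity of the determinant.

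The implementations differ in a way worth noting. The paper keeps $\alpha=\id$ on the direction-dependent region $\{|x|\leq\tfrac12\gamma(x/|x|)\}$ and interpolates on the remaining direction-dependent shell $\{\tfrac12\gamma(e)\leq|x|\leq\gamma(e)\}$ by choosing $\lambda(\cdot,e)$ to be a second-degree polynomial with prescribed values and first derivative at the inner radius. You instead use a \emph{fixed} inner ball $\{|x|\leq\delta/2\}$ and a fixed thin shell $\{\delta/2\leq|x|\leq\delta\}$, interpolating with a smooth cutoff $\psi$, so that on the whole outer region $\{|x|\geq\delta\}$ the map is simply the pure rescaling $x\mapsto x/\gamma(x/|x|)$. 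This buys you a somewhat cleaner verification that $\det D\alpha$ is Lipschitz: on the shell it is a polynomial in the fixed Lipschitz quantities $\psi(|x|)$, $|x|\psi'(|x|)$, $1/\gamma(x/|x|)$, glued continuously across two \emph{fixed} spheres (and the vanishing of $\psi'$ at the endpoints, automatic from smoothness of the cutoff, is precisely what makes the gluing of $\partial_r\phi$ continuous). Your closing remark is also to the point: the preliminary rescaling to $\gamma\leq 1$ is exactly what forces $\partial_r\phi\geq 1$ and $\phi/r\geq 1$ without any delicate choice of $\psi$.

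One place where your write-up is a bit thin is the Lipschitz bound on $\alpha^{-1}$: ``analogous structure'' is correct but deserves one more line. It suffices to note that $\partial_r\phi\geq 1$ gives $|\phi^{-1}(s_1,\sigma)-\phi^{-1}(s_2,\sigma)|\leq|s_1-s_2|$, and that the Lipschitz dependence of $\phi$ on $\sigma$ transfers to $\phi^{-1}$ via this same lower bound; then $\alpha^{-1}(y)=\phi^{-1}(|y|,y/|y|)\,y/|y|$ is Lipschitz on $\{|y|\geq\delta/2\}$ and equals $\id$ on $\{|y|\leq\delta/2\}$.
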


\noindent (We will not use the third property in the proof of Theorem \ref{denssobolev}, but it will be useful later in section~\ref{construction}).

\begin{proof} Up to a translation and a dilation, we can assume $x_0=0$ and $\gamma \leq 1$ on $S^{d-1}$. Now we set
$$ \alpha(x) = \begin{cases} x \qquad \text{if } |x| \leq \dfrac 12 \gamma\left(\dfrac{x}{|x|}\right) \\ \lambda(x)\dfrac{x}{|x|} \qquad \text{otherwise} \end{cases}. $$
for a suitable choice of the function $\lambda: U \to [0,+\infty)$. Assuming that $\lambda$ is Lipschitz, we compute $ D\alpha$ on the region $\left\{|x|> \dfrac 12 \gamma(x/|x|) \right\}$. Here we have $D\alpha = x \otimes \nabla\left(\dfrac{\lambda(x)}{x}\right) + \dfrac{\lambda}{|x|} I_d $ thus, in an orthonormal basis whose first vector is $e = \dfrac{x}{|x|}$,
$$ D\alpha = \left(\begin{array}{llll}
|x| \partial_e \left(\dfrac{\lambda(x)}{|x|}\right) & |x| \partial_{e_2} \left(\dfrac{\lambda(x)}{|x|}\right) & \dots & |x|\left(\partial_{e_n} \dfrac{\lambda(x)}{|x|}\right) \\
0 & 0 & \dots 0 \\
\vdots & \vdots & \dots & \vdots \\
0 & 0 & \dots 0 \\
\end{array} \right) + \frac{\lambda}{|x|} I_n
 $$
$$ \text{and} \qquad \det D\alpha = \left(|x|\partial_e \left(\dfrac{\lambda(x)}{|x|}\right)+\frac{\lambda(x)}{|x|}\right) \left(\frac{\lambda(x)}{|x|}\right)^{n-1}  $$
We write $\lambda = \lambda(r,e)$ where $r = |x|$, which leads to
$$ \det D\alpha = \left(r \partial_r\left(\frac{\lambda}{r}\right)+\frac{\lambda}{r}\right)\left(\frac{\lambda}{r}\right)^{n-1} = \partial_r\lambda \left(\frac{\lambda}{r}\right)^{n-1}$$
At this time, we see that the following conditions on $\lambda$ allow to conclude:
\begin{itemize}
\item $\lambda$ is Lipschitz on the domain $\left\{ \frac{\gamma(x/|x|)}{2} \leq |x| \leq \gamma(x/|x|) \right\}$;
\item if we fix $e \in S^{d-1}$, then $\lambda(\cdot,e)$ is increasing on the interval $\left[\frac{\gamma(e)}{2},\gamma(e)\right]$ ;
\item $\lambda\left(\frac{\gamma(e)}{2},e\right) = \frac{\gamma(e)}{2}$ and $\lambda(\gamma(e),e) = 1$;
\item $\partial_r\lambda\left(\frac{\lambda }{r}\right)^{n-1}$ is Lipschitz and is equal to $1$ for $r=\frac{\gamma(e)}{2}$, which means $\partial_r\lambda\left(\frac{\gamma(e)}{2},e\right) = 2^{n-1}$.
\end{itemize}
To satisfy these conditions, it is enough to choose for $\lambda(\cdot,e)$ a second-degree polynomial function with prescribed values at $\frac{\gamma(e)}{2}$, $\gamma(e)$ and prescribed first derivative at $\frac{\gamma(e)}{2}$.
\end{proof}

\begin{proof}[Proof of Theorem \ref{denssobolev}] In a first time, we assume $\Omega$ and $\Omega'$ to be both equal to the unit square $\left[-\frac{1}{2},\frac{1}{2}\right]^d$; we will generalize later to any pair of Lipschitz polar domains (see Step~4 below). Moreover, we will also identify the cube~$\Omega$ with a Torus, in order to define regularizations by convolution on it. Let then $T:\Omega\to\Omega$ be a transport from $\mu$ to $\nu$; our goal is to build a sequence $(T_n)_n$ of Lipschitz maps such~that
$$ T_n \xrightarrow[n\to+\infty]{L^2(\Omega)} T \qquad \text{and} \qquad \forall n  \in \N, \; (T_n)_\#\mu = \nu $$

{\it \underline{Step 1:} regularization of the transport plan.}

We denote by $\gamma$ the transport plan associated to $T$; let us recall that it is defined by
$$ \iint_{\Omega^2} \phi(x,y) \dx\gamma(x,y) = \int_{\Omega} \phi(x,T(x)) f(x) \dx x $$
for any continuous and bounded function $\phi$ on $\Omega\times\Omega'$.


We then consider a standard sequence of convolution kernels $(\rho_k)_k$, approximating the identity, and define a sequence of measures $(\gamma_k)_k$ by
$$\dx\gamma_k(x,y) = \rho_k(y-T(x)) f(x) \dx y \dx x$$
for each $k$ (here $y-T(x)$ has to be considered in a periodic sense, and this is why we identified $\Omega$ with the torus; in this way, there is no loss of mass in this approximation). By construction, $\gamma_k$ has a density on $\Omega^2$ which is smooth and bounded from below by a positive constant $c_k$. We also notice that:
\begin{itemize}
\item $\gamma_k$ has $\mu$ as first marginal, for any $k$; moreover, its second marginal, that we will denote by $\nu_k$, has also a smooth density which is bounded from below by a positive constant;
\item we have the convergence $\gamma_k \wto \gamma$ in the weak sense of measures; as a consequence, the convergence $\nu_k \wto \nu$ holds as well.
\end{itemize}



{\it \underline{Step 2:} construction of a transport map corresponding to the regularized transport plan.} 

The goal of this step is to build, for each $k$, a family of regular maps $(T^k_h)_h$ sending $\mu$ to $\nu_k$ and so that, up to a suitable diagonal extraction, the convergence $T^k_{h_k} \to T$ holds in $L^2(\Omega)$. We fix $k \in \N$ and a dyadic number $h$, and decompose $\Omega = \bigcup\limits_i Q_i$ into a union of cubes $Q_i$, each with size-length~$2h$, via a regular grid. For each $i$, we set
$$ \mu_i = \mu|_{Q_i} = (\pi_1)_\#(\gamma_k|_{(Q_i\times\Omega)}) \qquad \text{and} \qquad \nu_i^k = (\pi_2)_\#(\gamma_k|_{(Q_i\times\Omega)}) $$
It is clear that these two measures have same mass, the first one having $Q_i$ as support and the second one~$\Omega$. Moreover, the assumptions on~$\mu$ and the remarks above about $\gamma_k$ guarantee that $\mu_i$ and $\nu_i$ have both a density which is Lipschitz and bounded from below. The end of this step consists in building a map $U^k_{i,h}$, from the cube $Q_i$ to $\Omega$, such that:
\begin{itemize}
\item $U_{i,h}^k$ is Lipschitz continuous;
\item $U_{i,h}^k(x) = x$ for $x \in \partial Q_i$ (notice that, although the source and the target set of $U_{i,h}^k$ are different, this makes sense since $Q_i$ is included in $\Omega$);
\item $U_{i,h}^k$ sends the measure $\mu_i$ onto $\nu_i^k$.
\end{itemize}
We then define a global map $T_h^k : \Omega \to \Omega$ by $T_h^k(x) = U_{i,h}^k(x)$ on each $Q_i$. We notice that the first point above guarantees that $T_h^k$ is Lipschitz on each $Q_i$, the second one that it is continuous on each common boundary $Q_i\cap Q_j$ (thus it is globally Lipschitz on $\Omega$), and the third one that it sends $\mu$ to $\nu_k$, as required.

Let us now explain very briefly the construction of such a $U_{i,h}^k$. For the sake of simplicity, we give the details only in the case where $Q_i$ is not an extreme cube of $\Omega$, {\it i.e.}~where $\partial Q_i \cap \partial \Omega$ is empty, and claim that the extreme case can be treated in a similar way.

We denote by $x_i$ the center of $Q_i$ which has radius $h$ (for the infinite norm in $\R^d$), and by $\delta_1$ and $\delta_2$ the two positive numbers such that
$$ \mu( Q_i \setminus B_\infty(x_i,\delta_1)) = \frac{1}{2} \nu_i^k(\Omega\setminus Q_i) $$
$$ \mu(B_\infty(x_1,\delta_1)\setminus B_\infty(x_i,\delta_2)) = \frac{1}{2} \nu_i^k(\Omega\setminus Q_i)  $$
$$\text{and} \qquad \mu(B_\infty(x_i,\delta_2)) = \nu_i^k(Q_i) $$
where $B_\infty$ denotes the unit ball for the norm $||\cdot||_\infty$. Then:
\begin{itemize}
\item with an identical proof as Lemma~\ref{lemdiffeo}, we may find two Lipschitz diffeomorphisms $\psi_1^1$ and $\psi_1^2$, with Lipschitz Jacobian determinants, sending $\bar{Q_i} \setminus B_\infty(x_i,\delta_1)$ and $\bar{\Omega}\setminus Q_i$ to $\bar{B}(0,1)\setminus B(0,1/2)$ (where $B$ denotes this time the unit ball for the Euclidean norm) and having the property $(\psi_2^1)^{-1}\circ\psi_1^1(x) = x$ on the boundary $\partial Q_i$. Applying our version of the Dacorogna-Moser's result (Corollary~\ref{codm}) gives us a fist Lipschitz map~$V_1$ so that
$$ (V_1)|_\# (\mu|_{Q_i\setminus B_\infty(x_i,\delta_1)}) = \frac{1}{2} \nu_i^k|_{\Omega\setminus Q_i} \qquad \text{and} \qquad V_1(x)=x \; \text{on}\; \partial Q_i $$
\item There exists also two diffeomorphisms $\psi_1^2 : \bar{B}_\infty (x_i,\delta_1)\setminus B_\infty(x_i,\delta_2)$ and $\psi_2^2 : \bar\Omega\setminus Q_i \to \bar B(0,1)\setminus B(0,1/2)$, with the property $(\psi_2^2)^{-1}\circ\psi_1^2(x) = V_1(x)$ on $\partial B_\infty (x_i,\delta_1)$;  again, Corollary~\ref{codm} provides a Lipschitz map $V_2$ such that
$$ (V_2)|_\# (\mu|_{B_\infty(x_i,\delta_1) \setminus B_\infty(x_i,\delta_2)}) = \frac{1}{2} \nu_i^k|_{\Omega\setminus Q_i} \qquad \text{and} \qquad V_2(x)=V_1(x) \; \text{on}\; \partial B_\infty(x_i,\delta_1)  $$
\item Similarly, we can find a Lipschitz map $V_3$ such that
$$ (V_3)|_\#(\mu|_{B_\infty(x_i,\delta_2)}) = \nu_i^k|_{Q_i} \qquad \text{and} \qquad V_3(x) = V_2(x)\; \text{on}\; \partial B_\infty (x_i,\delta_2) $$
\end{itemize}
The maps $V_1$, $V_2$, $V_3$ being Lipschitz and coinciding on the interfaces, the global map defined by
$$ U_{i,h}^k = \left\{\begin{array}{ll} V_1 & \text{on } Q_i\setminus B_\infty(x_i,\delta_1)\\  V_2 & \text{on } B_\infty(x_i,\delta_1) \setminus B_\infty(x_i,\delta_2) \\ V_3 & \text{on } B_\infty(x_i,\delta_2) \end{array} \right. $$
is itself Lipschitz; moreover, the properties of the image measures of $V_1$, $V_2$, $V_3$ imply that $U_{i,h}^k$ sends, as required, the measure $\mu_i$ onto $\nu_i^k$

It then remains to check that, with this construction, $T_{h_k}^k \to T$ in $L^2(\Omega)$ for a suitable sequence $(h_k)_k$. We will prove that, in fact, we have $W_1(\gamma_{T_{h_k}^k},\gamma_T) \to 0$, where $W_1$ is the Wasserstein distance on the set $\mathcal{P}(\Omega^2)$ and $\gamma_{T_{h,k}^k}$ is the transport plan associated to the map $T_{h,k}^k$. Since the Wasserstein distance metrizes the convergence of transport plans, and thanks to Lemma~\ref{plansandmaps} above, we will conclude that the $L^2$-convergence that we want holds.

Let $u$ be a $1$-Lipschitz function $\Omega^2 \to \R$. Let us compute
$$ \int_{\Omega^2} u(x,y) \dx\gamma_{T_h^k}(x,y) - \int_{\Omega^2} u(x,y) \dx\gamma_k(x,y) = \sum\limits_i \int_{Q_i\times\Omega} u(x,y) (\dx\gamma_{T_h^k}-\dx\gamma_k)(x,y) $$
Since $\gamma_k$ and $\gamma_{T_h^k}$ have both the same second marginal on each $Q_i\times \Omega$,
$$ \int_{Q_i\times\Omega} u(x_i,y) (\dx\gamma_{T_h^k}(x,y)-\dx\gamma_k(x,y)) = 0$$
where, for each $i$, $x_i$ denotes the center of $Q_i$. Thus 
\begin{multline*} \int_{\Omega^2} u(x,y) \dx\gamma_{T_h^k}(x,y) - \int_{\Omega^2} u(x,y) \dx\gamma_k(x,y) = \sum\limits_i \int_{Q_i\times\Omega} (u(x,y)-u(x_i,y)) (\dx\gamma_{T_h^k}-\dx\gamma_k)(x,y)
\\\leq \sum\limits_i \int_{Q_i\times\Omega} |x-x_i| (\dx\gamma_{T_h^k}-\dx\gamma_k)(x,y) \end{multline*}
where in the last inequality we used the fact that $u$ is supposed to be $1$-Lipschitz. We deduce
$$  \int_{\Omega^2} u(x,y) \dx\gamma_{T_h^k}(x,y) - \int_{\Omega^2} u(x,y) \dx\gamma_k(x,y) \leq h \iint_{\Omega^2} (\dx\gamma_{T_h^k}-\dx\gamma_k) \leq 2h $$
This inequality holds for any $1$-Lipschitz function $u:\Omega^2\to\R$. Thanks to the duality formulation of the Monge problem, we deduce that $k$, $W_1(\gamma_k,\gamma_{T_h^k}) \leq h$ for any $k$. Since we know that $\gamma_k \wto \gamma_T$ as $k\to+\infty$, we also have $W_1(\gamma_k,\gamma_T) \to 0$. It is then enough to set $h_k = 1/{2^k}$, so that $W_1(\gamma_{T_{h_k}^k},\gamma_T) \to 0$ as required.

{\it \underline{Step 3:} rearranging $T_h^k$.} 

The goal of this step is to compose each map $T_h^k$ with a map $U_k$ sending $\nu_k$ to $\nu$; the obtained map $T_k := U_k\circ T_h^k$ will then send $\mu$ to $\nu$, and we must also chose $U_k$ so that $T_k \to T$. We use the classical regularity result of Caffarelli (see \cite[Theorem~4.4]{vil}): if 
\begin{itemize}
\item $X$, $Y$ are two bounded open sets of $\R^d$, uniformly convex and with $C^2$ boundary;
\item $f \in C^{0,\alpha}(\bar{X})$, $g \in C^{0,\alpha}(\bar{Y})$ are two probability densities
\end{itemize}
then the optimal transport map from $f$ to $g$ belongs to $C^{1,\alpha}(\bar{X})$. The assumption on the source domain is of course not satisfied here, thus we use Lemma~\ref{lemdiffeo} to get a bi-Lipschitz map $\alpha:\Omega\to B(0,1)$ such that $\det D\alpha$ is also Lipschitz. We then denote by
$$ \bar{g} = \alpha_\# g \qquad \text{and} \qquad \bar{g}_k = \alpha_\# g_k $$
where $g_k$ is the density of $\nu_k$ (we know that $g_k$ is Lipschitz and bounded from below). The fact that $\det D\alpha$ is Lipschitz and bounded guarantees that $\bar{g}$ is H\"older and $\bar{g}_k$ is Lipschitz.

We know that $g_k \rightharpoonup g$ in the weak sense of measures, thus $\bar{g}_k \rightharpoonup \bar{g}$ as well, and
$$ W_2(\bar{g}_k,\bar{g}) = \int_{B(0,1)} |U_k(x)-x|^2 \bar{g}_k(x) \dx x \to 0  $$
where $U_k$ is the optimal transport map from $\bar{g}_k$ to $\bar{g}$ for the quadratic cost; the Caffarelli's regularity result guarantees $U_k$ to belong to $C^{1,\alpha}(\bar{B}(0,1))$, and we deduce from the convergence $W_2(\bar{g}_k,{g}) \to 0$ that $U_k(x) \to x$ for almost any $x \in B(0,1)$. Now if we consider
$$ \tilde{T}_k = \alpha^{-1}\circ U_k \circ \alpha \circ T_h^k  $$
then we can check that $\tilde{T}_k \to T$ in $L^2(\Omega)$, and we have $(\tilde{T}_k)_\#\mu = \nu$ for each $k$ by construction. The proof is complete in the case $\Omega=\Omega'=\left[-\frac{1}{2},\frac{1}{2}\right]^d$.

{\it \underline{Step 4:} generalization to any pair of domains.} 

The result being proven in the particular case of the unit cube, the last step consists in generalizing it to a generic pair $(\Omega,\Omega')$ of Lipschitz polar domains. Given such a pair and a transport map $T$ from $\mu$ to $\nu$, we consider two Lipschitz diffeomorphisms $\alpha_1:\Omega\to[0,1]^d$, $\alpha_2:\Omega'\to[0,1]^d$ as in the Lemma \ref{lemdiffeo}. The regularity of $\det D\alpha_1$, $\det D\alpha_2$ guarantees that the image measures $(\alpha_1)_\#\mu$, $(\alpha_2)_\#\nu$ have both $C^{0,\alpha}$ regularity, thus we are able to find a sequence $(U_k)_k$ of transport maps from $(\alpha_1)_\#\mu$ to $(\alpha_2)_\#\nu$ converging for the $L^2([0,1]^d)$-norm to $(\alpha_2)\circ T \circ (\alpha_1)^{-1}$, and it is now easy to check that $T_k = (\alpha_2)^{-1}\circ U_k\circ\alpha_1$ sends $\mu$ to $\nu$ and converges to $T$. \end{proof}

\section{An example of approximation of order $\eps|\log\eps|$ \label{sectionepslogeps}}

As we said in the above section, the convergence $\frac{J_\eps-W_1}{\eps} \to \mathcal H$ allows to know the behavior of $\inf J_\eps$ and of any family $(T_\eps)_\eps$ of minimizers in the case where $\inf\mathcal H<+\infty$. This section is devoted to the study of an example where this assumption fails ({\it i.e.}~where any optimal transport map for the Monge problem is not Sobolev).

\subsection{Notations and structure of the optimal maps}

In the rest of this paper, we set $d = 2$ and we will denote by $(r,\theta)$ the usual system of polar coordinates in $\R^2$. Our source and target domains will be respectively
$$ \Omega = \left\{ x = (r,\theta) : 0 < r < 1, 0 < \theta < \frac{\pi}{2} \right\} $$
$$ \text{and} \qquad \Omega' = \left\{ x = (r,\theta) : R_1(\theta) < r < R_2(\theta), 0 < \theta < \frac{\pi}{2} \right\} $$
where $R_1$, $R_2$ are two Lipschitz functions $\left[0,\frac{\pi}{2}\right] \to (0,+\infty)$ with $\inf R_1 > 1$ and $\inf R_2 > \sup R_1$; we also suppose $R_1$ to be such that $R_1'$ is a function of bounded variation. Notice that we can choose $R_1$, $R_2$ such that the target domain $\Omega'$ is convex (for instance if the curves $r=R_1(\theta)$, $r=R_2(\theta)$ are actually two lines in the quarter plane). We assume that $f$ and $g$ are two Lipschitz densities on $\bar{\Omega}$, $\bar{\Omega'}$, bounded from above and below by positive constants, with the following hypothesis:
\begin{equation} \forall \theta \in \left(0,\frac{\pi}{2}\right), \, \int_0^1 f(r,\theta) \, r \dx r = \int_{R_1(\theta)}^{R_2(\theta)} g(r,\theta) \, r \dx r \label{egdensity} \end{equation}
which means that, for any $\theta$, the mass (with respect to $f$) of the segment joining the origin to the boundary of $\Omega$ and with angle $\theta$ is equal to the mass (with respect to $g$) of the segment with same angle joining the ``above'' and ``below'' boundaries of $\Omega'$ ({\it i.e.}~the curves $r=R_1(\theta)$ and $r=R_2(\theta)$) - see Figure~\ref{figomega} above.

Then the structure of the optimal maps for the Monge cost is given by the following:

\begin{prop} \label{prop1} Under these assumptions on $\Omega$, $\Omega'$, $f$, $g$, the Euclidean norm is a Kantorovich potential and the maximal transport rays are the segments joining $0$ to $(R_2(\theta),\theta)$. Consequently,
$$ T \in \O_1(\mu,\nu) \quad \iff \quad \left\{ \begin{array}{l} T_\# f = g \\[1mm] T(x) = \phi(x)\dfrac{x}{|x|} \; \text{for a.e.}~x \in \Omega  \end{array} \right. $$
for some function $\phi \in \Omega \to (\inf R_1,+\infty)$. \end{prop}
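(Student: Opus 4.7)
The plan is to produce one explicit optimal transport of the ray-preserving form, extract from it the fact that $u(x) = |x|$ is a Kantorovich potential, and then read off the transport-ray structure from $u$, so that the general optimality characterization recalled in Section~1 completes the proof.

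First I would build an admissible map $T_0(r,\theta) = (t_\theta(r), \theta)$ in polar coordinates, where for a.e.\ $\theta$, $t_\theta$ is the monotone 1D transport between the measures $r\, f(r,\theta)\,dr$ on $(0,1)$ and $r\, g(r,\theta)\,dr$ on $(R_1(\theta), R_2(\theta))$. Hypothesis \eqref{egdensity} says exactly that these two measures have the same mass, so $t_\theta$ exists; measurability of $\theta \mapsto t_\theta$ is routine via cumulative distribution functions; and the Jacobian $r$ in $dx = r\,dr\,d\theta$ makes $(T_0)_\#\mu = \nu$ a direct computation. Since $T_0(x)$ lies on the open ray from the origin through $x$ and is strictly farther than $x$ (because $\inf R_1 > 1 \geq |x|$), we have $|T_0(x)-x| = |T_0(x)| - |x|$ pointwise; integrating against $\mu$ and using $(T_0)_\#\mu = \nu$ gives $\int_\Omega |T_0(x)-x|\,d\mu(x) = \int_{\Omega'}|y|\,d\nu(y) - \int_\Omega |x|\,d\mu(x)$. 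Since $u(x) = |x|$ is $1$-Lipschitz on $\R^d$, the Kantorovich duality formula gives the reverse inequality, so both are equalities: $T_0$ is optimal and $u = |\cdot|$ is a Kantorovich potential.

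Once this is in hand, the transport rays follow from the equality case of the triangle inequality: $u(y)-u(x) = |y-x|$ becomes $|y| - |x| = |y-x|$, which forces $0$, $x$, $y$ to be aligned with $y = \lambda x$, $\lambda \geq 1$. The maximal transport ray of angle $\theta$ is then the full segment from $0$ to $(R_2(\theta),\theta)$: every interior point lies on an open transport ray whose endpoints can be chosen in $\Omega$ and $\Omega'$, and the segment cannot be extended past either endpoint without leaving $\bar\Omega \cup \bar{\Omega'}$. Applying the characterization of optimal maps from Section~1 (a map is optimal iff for $\mu$-a.e.\ $x$, $T(x)$ belongs to the same maximal transport ray as $x$ with the correct orientation) yields the claim: $T(x) = \phi(x)\,x/|x|$ with $R_1(\theta) < \phi(x) < R_2(\theta)$, in particular $\phi > \inf R_1$. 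The only real obstacle is the first step --- producing any admissible ray-preserving map --- but \eqref{egdensity} is engineered precisely so that this reduces to the elementary 1D Monge problem.
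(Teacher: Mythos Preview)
Your proposal is correct and follows essentially the same route as the paper: construct a ray-preserving transport using the one-dimensional maps $t_\theta$ guaranteed by \eqref{egdensity}, verify that $u=|\cdot|$ is a Kantorovich potential via the equality $|T_0(x)-x|=|T_0(x)|-|x|$, and then read off the transport-ray structure and the characterization of $\O_1(\mu,\nu)$. The only cosmetic difference is that you pass through the integrated duality inequality whereas the paper invokes the pointwise criterion $u(T(x))-u(x)=|T(x)-x|$ directly; the content is the same.
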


\begin{proof} For $\theta \in \left(0,\frac{\pi}{2}\right)$, we denote by $t(\cdot,\theta)$ a one-dimensional transport map from the measure $r \mapsto rf(r,\theta)$ to the measure $r \mapsto rg(r,\theta)$ (such a transport map exists since these two measures have same mass thanks to the equality \eqref{egdensity}).
It is then easy to check that the map
$$ T : x = (r,\theta) \in \Omega \to t(r,\theta) \frac{x}{|x|} $$
is a transport map from $f$ to $g$, and if we set $u=|\cdot|$, $u$ is of course $1$-Lipschitz and
$$ u(T(x))-u(x) = \left|t(r,\theta)\frac{x}{|x|}\right|-|x| = (|t(r,\theta)|-|x|) \frac{x}{|x|} =
\left|(t(r,\theta)-|x|)\frac{x}{|x|}\right|  = |T(x)-x| \qquad \text{for any } x\in \Omega.  $$
We deduce that that $u = |\cdot|$ is a Kantorovich potential. Consequently, a segment $[x,y]$ is a transport ray if and only if
$$ u(y)-u(x) = |y-x| \qquad {\it i.e.} \qquad |y-x|=|y|-|x|$$
Thus, we have $y = \lambda x$ for a positive $\lambda$; in such a case, $y$ and $x$ belong to the same line passing through the origin. In other words, the transport rays are included in the lines passing through the origin. Moreover, a transport map $T$ belongs to $\O_1(\mu,\nu)$ if and only if, for a.e.~$x~\in\Omega$, $ |T(x)-x| = |T(x)|-|x|$ which again means that $T(x) = \phi(x)\dfrac{x}{|x|}$ for some positive function $\phi$. \end{proof}

\noindent As we said in the introduction, this implies that any optimal map $T$ must present a singularity in $0$: formally, if we denote by $\phi(r,\theta)$ the norm of $T(x)$ for $x=(r,\theta) \in \Omega$, keeping in mind that $T(x)$ has same angle as $x$, we obtain that $0$ is sent to the whole curve $\theta\mapsto\phi(0,\theta)$ (we will see that this is actually true for the limit of a sequence of minimizers, see paragraph~\ref{parpresquegamma} below). In particular:

\begin{co} Under the above assumptions on $\Omega$, $\Omega'$, $\mu$, $\nu$, we have $ \O_1(\mu,\nu) \cap H^1(\Omega) = \emptyset $ \end{co}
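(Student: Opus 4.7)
I argue by contradiction: suppose there exists $T \in \O_1(\mu,\nu) \cap H^1(\Omega)$. By Proposition~\ref{prop1}, $T$ is forced to have the form $T(x) = \phi(x) \frac{x}{|x|}$ with $\phi(x) > \inf R_1 > 1$ almost everywhere, so the ``angular part'' $x/|x|$ is a unit vector field with a vortex-type singularity at the origin, multiplied by a scalar bounded below by a positive constant. The whole point is that this forces $|DT|^2$ to behave like $1/r^2$ near the origin, which is not integrable against the area element $r\,dr\,d\theta$.

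To make this precise, I work in polar coordinates $(r,\theta)\in(0,1)\times(0,\pi/2)$ with the orthonormal moving frame $(e_r, e_\theta)$. Using $\partial_\theta e_r = e_\theta$, the identity $T(r,\theta) = \phi(r,\theta)\, e_r(\theta)$ gives, almost everywhere,
\[ \partial_r T = (\partial_r\phi)\, e_r, \qquad \partial_\theta T = (\partial_\theta \phi)\, e_r + \phi\, e_\theta, \]
and since the frame is orthonormal,
\[ |DT|^2 = |\partial_r T|^2 + \frac{1}{r^2}|\partial_\theta T|^2 = (\partial_r \phi)^2 + \frac{(\partial_\theta \phi)^2 + \phi^2}{r^2} \geq \frac{\phi^2}{r^2}. \]
Integrating against $r\,dr\,d\theta$ and using $\phi \geq \inf R_1 > 0$:
\[ \int_\Omega |DT|^2\, dx \;\geq\; \int_0^{\pi/2}\!\!\int_0^1 \frac{\phi^2}{r}\,dr\,d\theta \;\geq\; (\inf R_1)^2 \cdot \frac{\pi}{2}\cdot \int_0^1 \frac{dr}{r} \;=\; +\infty, \]
contradicting $T\in H^1(\Omega)$. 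Hence $\O_1(\mu,\nu)\cap H^1(\Omega)=\emptyset$.

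The only point that genuinely requires care — and the place where I would be most careful — is justifying the polar-coordinate gradient formula for a merely $H^1$ map $T$ rather than a smooth one. This is standard: the change of variables $x=(r\cos\theta,r\sin\theta)$ shows that $\tilde T(r,\theta):=T(r\cos\theta,r\sin\theta)$ belongs to the $r$-weighted Sobolev space on $(0,1)\times(0,\pi/2)$, and the chain rule applied pointwise a.e.\ gives both the decomposition of $\partial_r T$ and $\partial_\theta T$ and the identity $|DT|^2 = |\partial_r T|^2 + r^{-2}|\partial_\theta T|^2$. Once this is granted, the contradiction is just the logarithmic divergence of $\int_0^1 dr/r$; no finer information about $\phi$ is needed beyond its strict positivity, which is itself a consequence of the structural form forced by Proposition~\ref{prop1}.
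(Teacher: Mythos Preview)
Your proof is correct and follows essentially the same approach as the paper: both invoke Proposition~\ref{prop1} to write $T(x)=\phi(r,\theta)\,x/|x|$ with $\phi\geq\inf R_1>0$, compute $|DT|^2$ in polar coordinates to extract the term $\phi^2/r^2$, and obtain the logarithmic divergence $\int_0^1 dr/r=+\infty$. Your added remark on justifying the polar-coordinate gradient identity for an $H^1$ map is a point the paper leaves implicit.
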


\begin{proof} Let $T \in \O_1(\mu,\nu)$. By Prop.~\ref{prop1}, we have $T(x) = \phi(r,\theta)\dfrac{x}{|x|}$ for $x = (r,\theta)$, where $\phi$ is a real-valued function; the fact that $T$ sends $\mu$ onto $\nu$ implies $T(x) \in \Omega'$ for any $x$, thus $\phi$ is bounded from below on $\Omega$ by the lower bound of $R_1$. We now compute the Jacobian matrix of $T$. Denoting by $x^\perp$ the image of~$x$ by the rotation with angle $\pi/2$, we have in the basis $\left(\frac{x}{|x|},\frac{x^\perp}{|x|}\right)$:
$$ DT(x) = \frac{x}{|x|}\otimes\nabla \phi(x)-\frac{\phi(x)}{|x|}I_d  = \left( \begin{array}{cc} \partial_r\phi & 0 \\[2mm] \dfrac{\partial_\theta\phi}{r} & \dfrac{\phi}{r} \end{array} \right) $$
$$\text{thus} \qquad \int_\Omega |DT(x)|^2 \dx x \geq \int_0^1\int_0^{\frac{\pi}{2}} \frac{\phi(r,\theta)^2}{r^2} \, r \dx\theta\text{d}r \geq \int_0^{1} \frac{\pi}{2}\left( \inf R_1\right)^2 \frac{\text{d}r}{r} = +\infty \qedhere $$\end{proof}

\subsection{Heuristics} \label{sectheu}

In this paragraph, we give a preliminary example of analysis of the behavior of $J_\eps(T_\eps)$ when $\eps \to 0$ and $T_\eps$~approaches an optimal map $T$ for the Monge problem; this will not lead directly to a rigorous proof of the general result, but gives an idea of which quantities will appear.

Assume that $T \in \O_1(\mu,\nu)$ with $T(x) = \phi(r,\theta) \dfrac{x}{|x|}$, and let us build an approximation $(T_\eps)_\eps$ defined~by
$$ T_\eps(x) = \left\{ \begin{array}{ll} S(x) & \text{if } x \in \Omega_\delta \\ T(x) & \text{otherwise} \end{array}\right. $$
where $\delta$ will be fixed depending on $\eps$, $\Omega_\delta = B(0,\delta) \cap \Omega$ and $S$ will be build to send $(f\cdot\leb^d)|_{\Omega_\delta}$ onto the same image measure that the original $T$ has on $\Omega_\delta$ ($S$ will actually depend both on $\delta$ and on $\eps$, but we omit this dependence for the sake of simplicity of notations). In this case, we have
$$ J_\eps(T_\eps) - W_1 = \int_\Omega |T_\eps(x)-x| f(x) \dx x - W_1 + \eps \, \int_{\Omega_\delta} |DS|^2 + \eps\,\int_{\Omega\setminus \Omega_\delta} |DT|^2  $$
Since $T$ is optimal for the Monge problem and coincides with $T_\eps$ outside of $\Omega_\delta$, we have
$$ \int_{\Omega} |T_\eps(x)-x|f(x)\dx x - W_1 = \int_\Omega (|T_\eps(x)-x|-|T(x)-x|) f(x) \dx x = \int_{\Omega_\delta}(|S(x)-x|-|T(x)-x|) f(x) \dx x $$
We now claim that
$$ \int_{\Omega_\delta} |T(x)-x| f(x) \dx x = \int_{\Omega_\delta} (|S(x)|-|x|) f(x) \dx x $$
Indeed, we still have the equality $|T(x)-x| = |T(x)|-|x|$, and the image measures of $(f\cdot\leb^d)|_{\Omega_\delta}$ by $T$ and $S$ are the same. As a consequence,
$$ \int_{\Omega} |T_\eps(x)-x|f(x)\dx x - W_1  = \int_{\Omega_\delta} (|S(x)-x|-|S(x)|+|x|) f(x) \dx x $$
and, by the triangle inequality, $|S(x)-x|-|S(x)|+|x| \leq |(S(x)-x)-S(x)|+|x| \leq 2|x|$ so that
$$ \int_{\Omega} |T_\eps(x)-x|f(x)\dx x - W_1  \leq \int_{\Omega_\delta} 2|x| \dx x \leq 2\int_0^{\frac{\pi}{2}} \int_0^\delta r^2 \dx r \dx \theta \leq \frac{\pi\delta^3}{3}  $$
In order to estimate the norm of the Jacobian matrix $DT$ outside of $\Omega_\delta$, we recall that, in the basis~$\left(\dfrac{x}{|x|},\dfrac{x}{|x|}^{\!\perp}\right)$,
$$ DT(x) =  \left( \begin{array}{cc} \partial_r\phi & 0 \\[2mm] \dfrac{\partial_\theta\phi}{r} & \dfrac{\phi}{r} \end{array} \right) $$
$$ \text{so that} \qquad \int_{\Omega\setminus \Omega_\delta} |DT(x)|^2 \dx x = \int_\delta^1 \int_0^{\frac{\pi}{2}} \left( \frac{\phi(r,\theta)^2+\partial_\theta\phi(r,\theta)^2}{r} + r \partial_r\phi(r,\theta)^2 \right) \dx \theta \dx r $$ 
We now focus on the asymptotics of these integrals, which involve the partial derivatives of~$\phi$. First of all, we notice that, for each $\theta\in(0,\pi/2)$, the one-dimensional map $\phi(\cdot,\theta)$ sends the density $rf(\cdot,\theta)$ to the density $rg(r,\theta)$. Since the first of these densities is bounded from above and the second one from below, one can expect that the partial derivative~$\partial_r\phi(\cdot,\theta)$ is bounded (using the fact that the first density actually vanishes around~$0$, we even could prove that $\partial_r\phi(r,\theta) \to 0$ as~$r\to~0$). Then,
\begin{equation} \int_0^{\frac{\pi}{2}}\int_\delta^1 \partial_r\phi(r,\theta)^2 r \dx r \dx \theta \leq C \label{intdrphi} \end{equation}
where $C$ is a constant independent of $\delta$. On the other hand,
$$ \int_\delta^1 \int_0^{\frac{\pi}{2}}  \frac{\phi(r,\theta)^2+\partial_\theta\phi(r,\theta)^2}{r} \dx \theta \dx r = \int_\delta^1 ||\phi(r,\cdot)||_{H^1(0,\pi/2)}^2 \,\frac{\text{d}r}{r} $$
In this last integral, we make the change of variable $r = \delta^t$, which gives
$$ \int_\delta^1 \int_0^{\frac{\pi}{2}}  \frac{\phi(r,\theta)^2+\partial_\theta\phi(r,\theta)^2}{r} \dx \theta \dx r = |\log\delta| \int_0^1 ||\phi(\delta^t,\cdot)||_{H^1(0,\pi/2)}^2 \dx t $$
Assuming moreover that $||\phi(\delta^t,\cdot)||_{H^1(0,\pi/2)}^2 \to ||\phi(0,\cdot)||_{H^1(0,\pi/2)}^2$ as $\delta \to 0$, we formally get
\begin{equation} \eps\, \int_{\Omega\setminus \Omega_\delta} |DT|^2 = \eps|\log\delta| ||\phi(0,\cdot)||_{H^1(0,\pi/2)}^2 + \grando\limits_{\eps,\delta \to 0}(\delta^3+\eps) \label{intoutomegadelta} \end{equation}
It then remains to estimate the $L^2$-norm of the Jacobian matrix of $S$ on $\Omega_\delta$. Let us recall that $S$ has to be built so that $T_\eps$, defined on the whole $\Omega$, is still a transport map from $\mu$ to $\nu$ with finite Sobolev norm; thus, the map $S$, defined on $\Omega_\delta$, must send $\Omega_\delta$ onto its original image $S(\Omega_\delta)$ in a regular way while keeping the constraint on the image measures:
$$ S_\#(\mu|_{\Omega_\delta}) = T_\#(\mu|_{\Omega_\delta}) $$
Moreover, the regularity of the global map $T_\eps$ implies a compatibility condition at the boundary:
$$ S(x) = T(x) \qquad \text{for } |x|=\delta $$
Thanks to the Dacorogna-Moser's result, we are indeed able to build such a map $S$. Moreover, the diameter of $\Omega_\delta$ is $\sqrt{2}\delta$; on the other hand, $T(\Omega_\delta)$ contains the whole curve $\theta \mapsto \phi(0,\theta)$, so that its diameter is bounded from below by a positive constant independent of $\delta$. Thus, the best estimate that one can expect~is
$$ \Lip S \leq \frac{C}{\delta} $$
For a reasonable transport map $T$, one can show that such a map $S$ can be found with moreover $S(x)=T(x)$ for $|x|=\delta$ (see the paragraph \ref{construction} below). In this case, the global map $T_\eps$ still sends $\mu$ to $\nu$ and we~have
\begin{equation} \int_{\Omega_\delta} |DT|^2 \leq \int_{\Omega_\delta} \left(\frac{C}{\delta}\right)^2 \leq C^2 \label{intsomegadelta} \end{equation}
Combining \eqref{intdrphi}, \eqref{intoutomegadelta}, \eqref{intsomegadelta} leads then to
$$ J_\eps(T_\eps)-W_1 = \eps|\log\delta| ||\phi(0,\cdot)||_{H^1(0,\pi/2)} + \grando\limits_{\eps,\delta\to 0}(\delta^3+\eps)  $$
If we choose $\delta = \eps^{1/3}$, we obtain
$$ J_\eps(T_\eps) = W_1 + \eps|\log\eps| \frac{1}{3}||\phi(0,\cdot)||^2_{H^1(0,\pi/2)} + \grando\limits_{\eps\to 0}(\eps) $$
In particular:
\begin{itemize}
\item the first order of convergence of $J_\eps(T_\eps)$ to $W_1$ is not anymore $\eps$, but $\eps|\log\eps|$;
\item the first significant term only involves the behavior of $\phi$ around $0$, which is the common singularity of all the optimal transport maps (and the only crossing point of all the transport rays). Precisely, the asymptotics suggests that $T_\eps \to T$, where $T$ at $r=0$ minimizes $||\phi(r,\cdot)||_{H^1}$.
\end{itemize}

\subsection{Main result and consequences \label{parpresquegamma}}

The analysis in the above paragraph suggests to introduce the minimal value of $||\phi(0,\cdot)||_{H^1(0,\pi/2)}$ among the functions $\phi$ such that
$$ x \mapsto \phi(r,\theta)\frac{x}{|x|} $$
is a transport map from $\mu$ to $\nu$. In particular, for such a $\phi$ and for any $x \in \Omega$, the point $\phi(r,\theta) \dfrac{x}{|x|}$ still belongs to the target domain $\Omega'$; thus, its value at $r=0$ verifies
$$ \text{for a.e.}~\theta \in (0,\pi/2), \quad R_1(\theta) \leq \phi(0,\theta) \leq R_2(\theta) $$
We will thus set
$$ K = \min\left\{ \int_0^{\frac{\pi}{2}} (\phi(\theta)^2+\phi'(\theta)^2) \dx\theta \,:\, \phi \in H^1\left(0,\frac{\pi}{2}\right), \, R_1(\theta) \leq \phi(\theta) \leq R_2(\theta) \right\} $$
and call $\Phi$ the function which realizes the minimum (it is unique since the Sobolev norm is strictly convex and since the constraints $R_1\leq \phi\leq R_2$ define a convex set).

We notice also that, since, for any real-valued function $\phi$ with $R_1 \leq \phi$, we have
$$ ||\phi\wedge (\sup R_1)||^2_{H^1} \leq ||\phi||_{H^1}^2  $$
and since $\sup R_1$ is smaller than $\inf R_2$, we may remove, from the problem defining $K$, the constraint $\phi\leq R_2$; in other words, the following equality holds (and be used in the sequel):
$$ K = \min\left\{ \int_0^{\frac{\pi}{2}} (\phi(\theta)^2+\phi'(\theta)^2) \dx\theta \,:\, \phi \in H^1\left(0,\frac{\pi}{2}\right), \, R_1(\theta) \leq \phi(\theta) \right\} $$

Following the expansion we found in Paragraph~\ref{sectheu}, we are interested in the behavior, as $\eps\to 0$, of the functional
$$F_\eps:T\mapsto \frac{1}{\eps}\left( J_\eps(T_\eps)-W_1-\frac{K}{3}\eps|\log\eps| \right)$$

Let us recall quickly the statement of Theorem~\ref{maintheo}. We introduce the following notations:
$$ G:\phi\in H^1(0,\pi/2)\mapsto ||\phi||_{H^1(0,\pi/2)}^2-K $$
$$ \text{and} \qquad F(T) = \left\{ \begin{array}{l} +\infty \qquad \text{if } T \notin \O_1(\mu,\nu) \\[2mm] \displaystyle\int_0^1 G(\phi(r,\cdot)) \dfrac{\text{\textnormal{d}}r}{r} + \displaystyle\int_0^1 ||\partial_r\phi(r,\cdot)||_{L^2}^2 \, r \dx r \qquad \text{if } T \in \O_1(\mu,\nu), T(x) = \phi(r,\theta)\dfrac{x}{|x|} \end{array} \right. $$
The three statements we are interested in are the following:
\begin{enumerate}
\item For any family of maps $(T_\eps)_\eps$ such that $(F_\eps(T_\eps))_\eps$ is bounded, there exists a sequence $\eps_k \to 0$ and a map $T$ such that $T_{\eps_k} \to T$ in~$L^2(\Omega)$.
\item There exists a constant $C$, depending only on the domains $\Omega$, $\Omega'$ and of the measures $f$, $g$, so that, for any family of maps $(T_\eps)_{\eps>0}$ with $T_\eps \to T$ as $\eps \to 0$ in $L^2(\Omega)$, we~have
$$ \liminf\limits_{\eps \to 0} F_\eps(T_\eps) \geq F(T)-C $$
\item Moreover, there exists at least one family $(T_\eps)_{\eps > 0}$ such that $(F_\eps(T_\eps))_\eps$ is indeed bounded. \end{enumerate}
The proof of this result is given below in Section~\ref{mainproof}. We finish this section by some comments, and by the consequences on the behavior of $(J_\eps)_\eps$ and their minimizers.

\paragraph{A conjecture on the $\Gamma$-limit.} Notice that we have not stated here a complete $\Gamma$-convergence result, but we only provide an estimate on the $\Gamma$-liminf, and the existence of a sequence with equibounded energy. Actually, we conjecture that the $\Gamma$-limit of the sequence $F_\eps$ is exactly of the form $F-C$, for a suitable constant $C$ depending on the shape of $\Omega'$, and on $f(0)$ (again, the main important region is that around $x=0$ in $\Omega$, which must be sent on the curve~$\Phi$). We will give more details on this conjecture and on the value of the constant $C$ at the end of Paragraph \ref{gammaliminfest}).

However, we do not prove this result here; the estimate that we are really able to prove is enough to get interesting consequences on the minima and the minimizers of $F_\eps$.

\paragraph{Consequences on the minimal value of $J_\eps$.} If we apply the Theorem~\ref{maintheo} to a sequence $(T_\eps)_\eps$ where each $T_\eps$ minimizes $J_\eps$ (which is equivalent with minimizing $F_\eps$), we obtain that the sequence $(F_\eps(T_\eps))_\eps$ is bounded and
$$ \inf J_\eps = F_\eps(T_\eps) = W_1(\mu,\nu)+\frac{K}{3}\eps|\log\eps|+O(\eps) $$
We recover both the order $\eps|\log\eps|$ and the constant $K$ which appeared at the end of the above paragraph. Notice that a full knowledge of the $\Gamma$-limit would allow to compute the constant in the term $O(\eps)$.

\paragraph{Consequences on the behavior of $(T_\eps)_\eps$.} The qualitative consequences of Theorem~\ref{maintheo} come essentially from the following property of the functions $F$:

\begin{prop} \label{PhiL2fort} Let $T \in \O_1(\mu,\nu)$, $T(x) = \phi(r,\theta)\dfrac{x}{|x|}$, such that $F(T) < +\infty$. Then $r \mapsto \phi(r,\cdot)$ is continuous from $[0,1]$ to $L^2(0,\pi/2)$, and we have $\phi(0,\cdot)=\Phi$. \end{prop}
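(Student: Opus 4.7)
The plan is to exploit the two consequences of $F(T) < +\infty$ given by Lemma~\ref{phimoinsphi}, namely
$$ \int_0^1 ||\phi(r,\cdot)-\Phi||_{L^2}^2 \, \frac{\dx r}{r} \leq F(T) < +\infty \quad \text{and} \quad \int_0^1 ||\partial_r\phi(r,\cdot)||_{L^2}^2 \, r \dx r \leq F(T) < +\infty, $$
where the first uses $G(\phi(r,\cdot)) \geq ||\phi(r,\cdot)-\Phi||_{H^1}^2 \geq ||\phi(r,\cdot)-\Phi||_{L^2}^2$. The second estimate implies (via Cauchy-Schwarz on any compact subinterval of $(0,1]$) that $r\mapsto \partial_r\phi(r,\cdot)$ is locally integrable on $(0,1]$ as an $L^2(0,\pi/2)$-valued map, so $\phi$ has an absolutely continuous representative $r\in(0,1]\mapsto \phi(r,\cdot)\in L^2(0,\pi/2)$, which we work with.

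The first step is to establish continuity on $(0,1]$. For $0 < r_1 < r_2 \leq 1$, the vector-valued identity $\phi(r_2,\cdot)-\phi(r_1,\cdot) = \int_{r_1}^{r_2} \partial_r\phi(s,\cdot) \dx s$ together with Minkowski's inequality in $L^2(0,\pi/2)$ and Cauchy-Schwarz (with weight $s$ against $1/s$) yields the crucial estimate
$$ ||\phi(r_2,\cdot)-\phi(r_1,\cdot)||_{L^2}^2 \leq \log(r_2/r_1) \int_{r_1}^{r_2} ||\partial_r\phi(s,\cdot)||_{L^2}^2 \, s \dx s, \qquad (*) $$
and the right-hand side tends to zero as $r_1\to r_2$ since the integral is bounded by $F(T)$ and the logarithm vanishes. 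This gives continuity on $(0,1]$.

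The core obstacle is continuity at $r=0$: the estimate $(*)$ alone does not provide a Cauchy criterion as $r_1,r_2 \to 0$, because $\log(r_2/r_1)$ need not be small. The workaround is a mean-value argument. Fix $\eta>0$ and choose $\delta>0$ so small that both
$$ \int_0^{2\delta} ||\phi(s,\cdot)-\Phi||_{L^2}^2 \frac{\dx s}{s} < \eta \quad \text{and} \quad \int_0^{2\delta} ||\partial_r\phi(s,\cdot)||_{L^2}^2 \, s \dx s < \eta. $$
For any $r\in(0,\delta)$, averaging the first integrand over $(r,2r)$ with respect to the measure $\dx s/s$ (of total mass $\log 2$) produces some $s_r\in(r,2r)$ with $||\phi(s_r,\cdot)-\Phi||_{L^2}^2 \leq \eta/\log 2$. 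Applying $(*)$ between $r$ and $s_r$, for which $\log(s_r/r) \leq \log 2$, gives $||\phi(r,\cdot)-\phi(s_r,\cdot)||_{L^2}^2 \leq \eta\log 2$. The triangle inequality then produces
$$ ||\phi(r,\cdot)-\Phi||_{L^2} \leq \sqrt{\eta/\log 2} + \sqrt{\eta\log 2} \xrightarrow[\eta\to 0]{} 0, $$
which shows $\phi(r,\cdot)\to \Phi$ in $L^2(0,\pi/2)$ as $r\to 0$. Combined with the continuity on $(0,1]$, this gives the continuity of $r\mapsto \phi(r,\cdot)$ on $[0,1]$ with $\phi(0,\cdot)=\Phi$.
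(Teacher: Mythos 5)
Your proof is correct, and its skeleton coincides with the paper's: both arguments extract from $F(T)<+\infty$ the two bounds $\int_0^1 G(\phi(r,\cdot))\,\frac{\dx r}{r}<\infty$ and $\int_0^1 ||\partial_r\phi(r,\cdot)||_{L^2}^2\, r\dx r<\infty$, invoke Lemma~\ref{phimoinsphi} (legitimately, since $T$ valued in $\Omega'$ forces $R_1\leq\phi(r,\cdot)$) to turn the first into $\int_0^1 ||\phi(r,\cdot)-\Phi||_{L^2}^2\,\frac{\dx r}{r}<\infty$, and establish the same Cauchy--Schwarz oscillation estimate $(*)$; continuity on $(0,1]$ is then obtained identically. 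Where you genuinely diverge is the endgame at $r=0$. The paper substitutes $r=e^{-t}$ and argues that $t\mapsto ||\phi(e^{-t},\cdot)-\Phi||_{L^2}^2$ is both integrable on $(0,+\infty)$ and has a modulus of continuity inherited from the oscillation estimate, hence must vanish at infinity. You instead run a dyadic averaging argument: in each interval $(r,2r)$ the weighted integrability produces a radius $s_r$ where $||\phi(s_r,\cdot)-\Phi||$ is quantitatively small, and the \emph{localized} form of $(*)$ (with the integral of $||\partial_r\phi(s,\cdot)||^2_{L^2}\, s\,\dx s$ taken over $(r,s_r)$ rather than bounded crudely by the global constant) transports that smallness back to $r$ at cost $\eta\log 2$. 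The two mechanisms are equivalent in strength, but yours is slightly cleaner: by keeping the right-hand side of $(*)$ localized you get an explicit, uniform bound $||\phi(r,\cdot)-\Phi||_{L^2}\leq\sqrt{\eta/\log 2}+\sqrt{\eta\log 2}$ for all $r<\delta(\eta)$, and you avoid the change of variables and the continuity-plus-$L^1$ compactness-type reasoning of the paper's version.
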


\noindent Combined with Theorem~\ref{maintheo}, Proposition \ref{PhiL2fort} implies that that if $(T_\eps)_\eps$ is a family of maps with $(F_\eps(T_\eps))_\eps$ bounded, then it has, up to a subsequence, a limit $T = \phi(r,\theta)\frac{x}{|x|}$, where $\phi(r,\theta)$ is continuous with respect to $r$ and has $\Phi(\theta)$ as limit as $r\to 0$. In other words, $T$ sends $0$ onto the curve $r = \Phi(\theta)$ which has the best $H^1$-norm among the curves with values in the target domain $\Omega'$. This is in particular true if each $T_\eps$ minimizes $J_\eps$ (thus $F_\eps$). \smallskip

In order to prove Proposition~\ref{PhiL2fort}, the following lemma will be needed:

\begin{lem} \label{phimoinsphi} The function $\Phi$ is Lipschitz on $(0,\pi/2)$ and, for any $\phi \in H^1(0,\pi/2)$ verifying $R_1 \leq \phi$, we have
$$G(\phi) = ||\phi||^2_{H_1}-||\Phi||^2_{H_1} \geq ||\phi-\Phi||_{H^1}^2 $$ \end{lem}


\begin{proof} From the Euler-Lagrange equation associated to the problem which defines $\Phi$, we infer that $\Phi'' = \Phi$ on the set of points where $R_1 < \Phi < R_2$; this, together with the information that $R_1$ and $R_2$ are both Lipschitz, implies that $\Phi$ is Lipschitz as well. On the other hand, denoting by $\mathcal{C}$ the (convex) set of Sobolev functions which are between $R_1$ and $R_2$, we notice that $\Phi$ is actually the orthogonal projection of $0$ onto $\mathcal{C}$ in the Hilbert space $H^1(0,\pi/2)$; in particular,
\begin{equation} \forall \phi \in \mathcal{C}, \, \langle \Phi,\phi-\Phi \rangle \geq 0 \label{projh1} \end{equation}
If now $\phi \in \mathcal{C}$, then
$$ G(\phi) - ||\phi-\Phi||_{H^1}^2 = ||\phi||_{H^1}^2-||\Phi||_{H^1}^2- ||\phi-\Phi||_{H^1}^2 = 2\, \langle \Phi,\phi-\Phi \rangle $$
which is non-negative thanks to the inequality \eqref{projh1}. \end{proof}

\begin{proof}[Proof of Prop.~\ref{PhiL2fort}] The assumption on $T$ implies that the integrals
$$\displaystyle\int_0^1 G(\phi(r,\cdot)) \,\frac{\text{d}r}{r} \qquad \text{and} \qquad \displaystyle\int_0^1 ||\partial_r\phi(r,\cdot)||_{L^2(0,\pi/2)}^2 \, r \dx r$$
are both controlled by some finite constant $A$. Now we have for $\theta \in (0,\pi/2)$:
$$ |\phi(r_1,\theta)-\phi(r_2,\theta)| = \int_{r_1}^{r_2} \partial_r\phi(r,\theta) \dx r \leq \left(\int_{r_1}^{r_2} \partial_r\phi(r,\theta)^2 \, r \dx r \right)^{1/2} \, \left(\int_{r_1}^{r_2} \frac{\text{d}r}{r} \right)^{1/2} $$
$$\text{thus} \qquad \int_0^{\pi/2} |\phi(r_1,\theta)-\phi(r_2,\theta)|^2 \dx\theta \leq \left(\int_{r_1}^{r_2} ||\partial_r\phi(r,\cdot)||_{L^2(0,\pi/2)}^2 \, r \dx r\right) \left( \int_{r_1}^{r_2} \frac{\text{d}r}{r} \right) $$
\begin{equation} \text{so that} \qquad ||\phi(r_1,\cdot)-\phi(r_2,\cdot)||_{L^2(0,\pi/2)}^2 \dx\theta \leq A\log\frac{r_2}{r_1} \label{loglip} \end{equation}
This proves the continuity of $r \mapsto \phi(r,\cdot)$.

On the other hand, thanks to the Lemma~\ref{phimoinsphi}, we have
$$ A \geq \int_0^1 G(\phi(r,\cdot)) \frac{\text{d}r}{r} \geq \int_0^1 ||\phi(r,\cdot)-\Phi||_{L^2(0,\pi/2)}^2 \frac{\text{d}r}{r} $$
By setting $r = e^{-t}$, we obtain
$$ A \geq \int_0^{+\infty} ||\phi(e^{-t},\cdot)-\Phi||_{L^2}^2 \dx t $$
But, for $t_1 < t_2 \in (0,+\infty)$, we have
$$ \begin{array}{ll} \Big| ||\phi(e^{-t_1},\cdot)-\Phi||_{L^2}^2-||\phi(e^{-t_2},\cdot)-\Phi||_{L^2}^2 \Big| & = \Big|\langle \phi(e^{-t_1},\cdot)-\phi(e^{-t^2},\cdot), \Phi \rangle_{L^2}\Big| \\[3mm]
& \leq ||\phi(e^{-t_1},\cdot)-\phi(e^{-t_2},\cdot)||\,||\Phi|| \\[1mm]
& \leq A\log\dfrac{e^{-t_2}}{e^{-t_1}} = A(t_2-t_1) \end{array} $$
where the last inequality comes from \eqref{loglip}. Thus, the function $t \mapsto ||\phi(e^{-t},\cdot)-\Phi||_{L^2}^2$ is Lipschitz and belongs to $L^1(0,+\infty)$. This implies that it vanishes at $+\infty$, so that $\phi(r,\cdot) \to \Phi$ in $L^2(0,\pi/2)$ as $r \to 0$. \end{proof}

\section{Proof of Theorem \ref{maintheo} \label{mainproof}}

\subsection{$\Gamma$-liminf estimate \label{gammaliminfest}}

First of all, given a family $(T_\eps)_\eps$ of transport maps, let us write precisely the expression of $F_\eps(T_\eps)$. We~have
$$F_\eps(T_\eps) = \frac{1}{\eps} \left(\int_\Omega|T_\eps(x)-x|f(x)\dx x - W_1\right) + \int_\Omega |DT_\eps(x)|^2\dx x - \frac{K}{3}|\log\eps|  $$
We decompose $T_\eps$ into radial and tangential components
$$ T_\eps(x) = \phi_\eps(r,\theta)\frac{x}{|x|}+\psi_\eps(r,\theta)\frac{x}{|x|}^{\!\perp} $$
and compute
$$ DT_\eps = \frac{x}{|x|}\otimes\nabla\phi_\eps(x)+\frac{\phi_\eps(x)}{|x|}\left(I_d-\frac{x}{|x|}\otimes\frac{x}{|x|}\right)+\frac{x}{|x|}^{\!\perp}\otimes\nabla\psi_\eps(x)+\frac{\psi_\eps(x)}{|x|}\left(R-\frac{x}{|x|}^{\!\perp}\otimes\frac{x}{|x|}\right) $$
where $R$ denotes the rotation with angle $\pi/2$ and we still set $x^\perp = Rx$. Thus, the matrix of $DT_\eps$ in the basis~$\left(\dfrac{x}{|x|},\dfrac{x}{|x|}^{\!\perp}\right)$~is
$$ DT_\eps(x) = \left(\begin{array}{cc} \partial_r\phi_\eps & \partial_r\psi_\eps \\[2mm] \dfrac{\partial_\theta\phi_\eps-\psi_\eps}{r} & \dfrac{\phi_\eps+\partial_\theta\psi_\eps}{r} \end{array} \right) $$
so that
$$ |DT_\eps|^2 = \partial_r\phi_\eps^2+\partial_r\psi_\eps^2+\frac{(\partial_\theta\phi_\eps-\psi_\eps)^2}{r^2}+\frac{(\phi_\eps+\partial_\theta\psi_\eps)^2}{r^2} $$
Setting $\delta =\eps^{1/3}$, we get
$$ \int_\Omega |DT_\eps|^2 = \int_{\Omega_\delta} |DT_\eps|^2 + \int_\delta^1 (||\partial_r\phi_\eps(r,\cdot)||_{L^2}^2+||\partial_r\psi_\eps||_{L^2}^2) \, r \dx r + \int_\delta^1 (||\partial_\theta\phi_\eps-\psi_\eps||_{L^2}^2+||\phi_\eps+\partial_\theta\psi_\eps||_{L^2}^2) \,\frac{\text{d}r}{r} $$
On the other hand, we already know that
$$ \int_{\Omega} |T_\eps(x)-x|f(x)\dx x - W_1 = \int_{\Omega} (|T_\eps(x)-x|-|T_\eps(x)|+|x|) f(x) \dx x $$
and notice that, by definition of $\delta$,
$$ \frac{K}{3}|\log\eps| = K|\log\delta| = K\int_\delta^1 \frac{\text{d}r}{r} $$
Finally, the complete expression of $F_\eps$ is the following:
\begin{multline*} F_\eps(T_\eps) = \frac{1}{\eps} \int_\Omega (|T_\eps(x)-x|-|T_\eps(x)|+|x|) f(x) \dx x + \int_{\Omega_\delta} |DT_\eps|^2 \\ + \int_\delta^1 (||(\partial_\theta\phi_\eps-\psi_\eps)(r,\cdot)||_{L^2}^2+||(\phi_\eps+\partial_\theta\psi_\eps)(r,\cdot)||_{L^2}^2 - K) \,\frac{\text{d}r}{r} + \int_\delta^1 (||\partial_r\phi_\eps(r,\cdot)||_{L^2}^2+||\partial_r\psi_\eps||_{L^2}^2) \, r \dx r  \end{multline*}
thus, if we denote by $H(\phi,\psi) = \displaystyle\int_0^{\pi/2} (\phi'(\theta)-\psi(\theta))^2+(\phi(\theta)+\psi'(\theta))^2 \dx\theta$ for $\phi,\psi \in H^1(0,\pi/2)$, we have
\begin{multline} F_\eps(T_\eps) = \frac{1}{\eps} \int_\Omega (|T_\eps(x)-x|-|T_\eps(x)|+|x|) f(x) \dx x + \int_{\Omega_\delta} |DT_\eps|^2 \\ + \int_\delta^1 (H(\phi_\eps(r,\cdot),\psi_\eps(r,\cdot)) - K) \,\frac{\text{d}r}{r} + \int_\delta^1 (||\partial_r\phi_\eps(r,\cdot)||_{L^2}^2+||\partial_r\psi_\eps||_{L^2}^2) \, r \dx r \label{fepsteps} \end{multline}
The following lemma collects some properties of the function $H$.

\begin{lem} The function $H$, defined on $H^1(0,\pi/2)^2$, satisfies the following properties:
\begin{itemize}
\item $H$ is lower semi-continuous with respect to the strong $L^2$-convergence;
\item Assume that $(\phi,\psi)$ satisfies, for any $\theta$,
$$ \phi(\theta)\hat{x}(\theta)+\psi(\theta)\hat{x}^\perp(\theta) \in \Omega' $$
where $\hat{x}(\theta) = (\cos\theta,\sin\theta)$. We denote by $\tilde{\phi}(\theta) = \max(\phi(\theta),R_1(\theta))$. Then we have the inequality
\begin{equation} H(\phi,\psi) \geq K+\frac{1}{2}||\tilde{\phi}-\Phi||_{L^2}^2-B||\psi||_{L^2(0,\pi/2)}^{2/3} \label{deuxtiers} \end{equation}
for some positive constant $B$ which only depends on $\Omega'$.
\end{itemize} \end{lem}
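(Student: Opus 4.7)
If we introduce $T(\theta):=\phi(\theta)\hat x(\theta)+\psi(\theta)\hat x^\perp(\theta)$, then using $\hat x'=\hat x^\perp$ and $(\hat x^\perp)'=-\hat x$ we find $T'(\theta)=(\phi'-\psi)\hat x+(\phi+\psi')\hat x^\perp$, hence
$$H(\phi,\psi)=\int_0^{\pi/2}|T'(\theta)|^2\,d\theta.$$
In other words $H$ is the squared $L^2$-norm of the derivative of the curve $T\colon[0,\pi/2]\to\Omega'$, while $H(\tilde\phi,0)=\int(\tilde\phi^2+\tilde\phi'^2)$ is the same quantity for the purely radial curve $\tilde\phi\hat x$, and $K$ is its minimum under $\tilde\phi\geq R_1$.

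\textbf{Part 1.} If $(\phi_n,\psi_n)\to(\phi,\psi)$ in $L^2\times L^2$, the associated curves $T_n$ converge to $T$ in $L^2([0,\pi/2];\R^2)$. The $L^2$-lower semi-continuity of $T\mapsto\|T'\|_{L^2}^2$ is then classical: if $\liminf_n\|T_n'\|_{L^2}^2=+\infty$ there is nothing to prove; otherwise, a subsequence is bounded in $H^1$ and converges weakly there to $T$, and weak lower semi-continuity of the norm concludes.

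\textbf{Part 2.} I would use the polar decomposition $T=R\,\hat x(\alpha)$ with $\beta:=\alpha-\theta$. The constraint $T\in\Omega'\subset\{0<\alpha<\pi/2\}$ forces $|\beta|<\pi/2$, whence $\phi=R\cos\beta\geq 0$; moreover $R\geq\inf R_1=:m>0$, and $|\sin\beta|=|\psi|/R$ combined with $|\sin\beta|\geq(2/\pi)|\beta|$ gives the pointwise bound $|\beta|\leq(\pi/(2m))|\psi|$. In polar coordinates $|T'|^2=R'^2+R^2(1+\beta')^2$, so
$$H=\int(R^2+R'^2)\,d\theta+2\int R^2\beta'\,d\theta+\int R^2\beta'^2\,d\theta.$$
I then introduce the radial competitor $\tilde R(\theta):=\max(R(\theta),R_1(\theta))\geq R_1$, which is admissible for the problem defining $K$. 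By Lemma~\ref{phimoinsphi}, $\int(\tilde R^2+\tilde R'^2)\geq K+\|\tilde R-\Phi\|_{L^2}^2$, and the triangle-type inequality $\|\tilde R-\Phi\|^2\geq\tfrac12\|\tilde\phi-\Phi\|^2-\|\tilde R-\tilde\phi\|^2$ reduces the proof to the two estimates (a) $\int(\tilde R^2+\tilde R'^2)\leq H+B_0\|\psi\|_{L^2}^{2/3}$ and (b) $\|\tilde R-\tilde\phi\|_{L^2}^2\leq B_1\|\psi\|_{L^2}^{2/3}$ with $B_0,B_1$ depending only on $\Omega'$. Estimate (b) is immediate: $\tilde R-\tilde\phi\leq R-\phi=\psi^2/(R+\phi)\leq\psi^2/m$ (using $\phi\geq 0$), whence $\|\tilde R-\tilde\phi\|_{L^2}^2\leq C\|\psi\|_\infty^2\|\psi\|_{L^2}^2$; since $\|\psi\|_{L^2}$ is a priori bounded by the diameter of $\Omega'$, one upgrades $\|\psi\|_{L^2}^2$ to $C'\|\psi\|_{L^2}^{2/3}$.

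\textbf{Main obstacle.} Estimate (a) is the technical heart of the proof. Integrating the cross term by parts gives $2\int R^2\beta'=2[R^2\beta]_0^{\pi/2}-4\int RR'\beta$. The boundary term requires a pointwise bound on $\beta$, which I would obtain from the 1D Gagliardo-Nirenberg inequality $\|\beta\|_\infty\leq C(\|\beta\|_{L^2}^{1/2}\|\beta'\|_{L^2}^{1/2}+\|\beta\|_{L^2})$, while the interior term is controlled by $|\int RR'\beta|\leq\|R'\|_{L^2}\|R\beta\|_{L^2}\leq M\sqrt{H}\,\|\beta\|_{L^2}$. Both rely on the a priori estimates $\|R'\|_{L^2}\leq\sqrt{H}$, $\|\beta\|_{L^2}\leq C\|\psi\|_{L^2}$ and $\|\beta'\|_{L^2}\leq C\sqrt{H+1}$, the last one being derived from the Lagrange identity $\phi'^2+\psi'^2=R'^2+R^2\beta'^2$ together with the bound $\|\phi'\|_{L^2}^2+\|\psi'\|_{L^2}^2\leq 2H+C$ (itself obtained by AM-GM from the elementary inequality $(\phi+\psi')^2\geq\psi'^2/2-\phi^2$). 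The resulting product $\|\psi\|_{L^2}^{1/2}H^{1/4}$ is then split via Young's inequality with conjugate exponents $(4/3,4)$ into $\varepsilon H+C_\varepsilon\|\psi\|_{L^2}^{2/3}$, and it is precisely the exponent $p=4/3$ that produces the $2/3$ of the statement. The delicate point is orchestrating the several Young inequalities so that the cumulative $\varepsilon$-coefficient of $H$ on the right remains strictly less than $1$; once this is arranged, absorbing the $\varepsilon H$ contributions into the left yields (a) with coefficient $1$ on $H$, and the combination with (b) and Lemma~\ref{phimoinsphi} gives the announced inequality.
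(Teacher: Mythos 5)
Your identity $H(\phi,\psi)=\int_0^{\pi/2}|T'|^2$ for the curve $T=\phi\hat x+\psi\hat x^\perp$ is correct and makes Part 1 cleaner than the paper's argument (which extracts separate $H^1$ bounds on $\phi_n,\psi_n$ from the algebraic structure of $H$ before invoking convexity). Your polar route for Part 2 is also genuinely different: the paper never introduces $R,\beta$, but expands $H=\|\phi\|_{H^1}^2+\|\psi\|_{H^1}^2-4\int\psi\phi'-2[\phi\psi]_0^{\pi/2}$ and splits into the cases $\|\phi'\|_{L^2}$ large/small. The skeleton of your reduction to (a) and (b), and the estimate (b), are fine. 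But there is a genuine gap exactly where you locate the difficulty.

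The claim that ``absorbing the $\eps H$ contributions into the left yields (a) with coefficient $1$ on $H$'' is false. From $\int(\tilde R^2+\tilde R'^2)\leq(1+\eps)H+C_\eps\|\psi\|_{L^2}^{2/3}$ you only obtain $H\geq\frac{1}{1+\eps}\bigl(K+\|\tilde R-\Phi\|_{L^2}^2\bigr)-C\|\psi\|_{L^2}^{2/3}$, which falls short of $K$ by the fixed amount $\frac{\eps}{1+\eps}K$. This deficit cannot be hidden in $-B\|\psi\|_{L^2}^{2/3}$, which vanishes as $\psi\to0$; and the exact constant $K$ is essential downstream, since the lemma is integrated against $\text{d}r/r$ over $(\delta,1)$ and any constant deficit would produce $-c|\log\delta|\to-\infty$. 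The repair is to absorb the Young remainders only into terms of $H$ that are \emph{not} needed for the comparison with $K$: the $\eta\|\beta'\|_{L^2}^2$ coming from the boundary term $[R^2\beta]_0^{\pi/2}$ goes into the retained term $\int R^2\beta'^2\geq(\inf R_1)^2\|\beta'\|_{L^2}^2$, while for the interior term one notes that the claimed inequality is trivial unless $H\leq K+\pi(\sup R_2)^2$, after which $|\int RR'\beta|\leq C\|R'\|_{L^2}\|\psi\|_{L^2}\leq C\sqrt{H}\,\|\psi\|_{L^2}\leq C'\|\psi\|_{L^2}^{2/3}$ directly. This a priori truncation of $H$ (or some equivalent case distinction) is precisely the role of the paper's dichotomy on $\|\phi'\|_{L^2}$, and is also where the factor $\frac12$ in front of $\|\tilde\phi-\Phi\|^2$ originates; without it your argument does not close.

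A second, smaller omission: passing from $\int(R^2+R'^2)$ to $\int(\tilde R^2+\tilde R'^2)$ with $\tilde R=\max(R,R_1)$ is not free. Writing $k=(R_1-R)^+$ (which is pointwise $O(|\psi|)$), the derivative excess is $\int(\tilde R'^2-R'^2)=\int(2R_1'k'-k'^2)$ on $\{k>0\}$, and $\int R_1'k'$ is \emph{not} controlled by $\|k\|_\infty$ via Cauchy--Schwarz; one must integrate by parts, $\int R_1'k'=[R_1'k]-\int R_1''k$, which is where the standing assumption that $R_1'$ has bounded variation enters (this is the paper's estimate $|\langle\tilde\phi,h\rangle_{H^1}|\leq B_2\|h\|_\infty$). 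Your proposal treats the replacement $R\mapsto\tilde R$ as if only the zero-order part mattered; this step needs to be spelled out.
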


\begin{proof}{ }

{\it \underline{Step 1:} the semi-continuity of $H$.} We take a sequence $(\phi_n,\psi_n)_n$ converging to some $(\phi,\psi)$ for the $L^2$-norm. Up to subsequences, we can assume that
$$ \liminf\limits_{n\to+\infty} H(\phi_n,\psi_n) = \lim\limits_{n\to+\infty} H(\phi_n,\psi_n) $$
and we also assume that $(H(\phi_n,\psi_n))_n$ is bounded. Now we remark that
$$ H(\phi_n,\psi_n) = ||\phi_n'-\psi_n||_{L^2}^2+||\phi_n+\psi_n'||_{L^2}^2 \geq (||\phi_n'||_{L^2}-||\psi_n||_{L^2})^2+(||\phi_n'||_{L^2}-||\psi_n||_{L^2}^2)^2 $$
$$ \text{thus}\qquad ||\phi'_n||_{L^2} \leq \sqrt{H(\phi_n,\psi_n)}+||\psi_n||_{L^2} \qquad \text{and} \qquad ||\psi'_n||_{L^2} \leq \sqrt{H(\phi_n,\psi_n)}+||\phi_n||_{L^2}$$
We deduce that $(\phi_n)_n$, $(\psi_n)_n$ are bounded in $H^1(0,\pi/2)$ so that the convergence $(\phi_n,\psi_n)\to(\phi,\psi)$ actually holds, up to a subsequence, weakly in $H^1(0,\pi/2)$. Now the convexity of $(\phi,\phi',\psi,\psi')\mapsto(\phi'-\psi)^2+(\phi+\psi')^2$ implies that $H$ is lower semi-continuous with respect to the weak convergence in $H^1(0,\pi/2)$, which allows to conclude.

Now we pass to the proof of the inequality \eqref{deuxtiers}. We begin by a kind of ``sub-lemma'' which will be useful several times in the proof:

{\it \underline{Step 2:} preliminary estimates.} We claim that:
\begin{itemize}
\item for any $t\in(0,\pi/2)$, we have the inequality
\begin{equation} 0\leq h(t)\leq B_1|\psi(t)| \label{hpsi} \end{equation}
for some constant $B_1$ depending only on $\Omega'$;
\item we have the inequality
\begin{equation} \left|\langle \tilde{\phi},h \rangle_{H^1}\right| \leq B_2||h||_\infty \label{estimscal} \end{equation}
for some constant $B_2$ depending only on $\Omega'$ (recall here that $\tilde\phi = \max(R_1,\phi)$;
\item these two inequalities lead to the estimate
\begin{equation} ||\phi||_{H^1}^2 \geq K+||\tilde{\phi}-\Phi||_{H^1}^2 - B_3||\psi||_\infty \label{phih1} \end{equation}
for some constant $B_3$ depending only on $\Omega'$.
\end{itemize}

\noindent First, we remark that the constraint $\phi(\theta)\hat{x}(\theta)+\psi(\theta)\hat{x}^\perp(\theta) \in \Omega'$ 
implies
$$ R_1(\theta')^2 < \phi(\theta)^2+\psi(\theta)^2 < R_2(\theta')^2 \qquad \text{where} \qquad \theta'=\theta+\arcsin\dfrac{\psi(\theta)}{\sqrt{\phi(\theta)^2+\psi(\theta)^2}} $$
Thus, we have
$$ \begin{array}{ll} h(\theta) & = R_1(\theta)-\phi(\theta) \\[1mm]
 & = R_1(\theta)-R_1(\theta')+R_1(\theta')-\phi(\theta) \\[1mm]
 & \leq (\Lip R_1) |\theta-\theta'| + \sqrt{\phi^2(\theta)+\psi^2(\theta)}-\phi(\theta) \\[1mm]
 & \leq (\Lip R_1) \arcsin\dfrac{|\psi(\theta)|}{R_1(\theta')} + |\psi(\theta)| \\[1mm]
 & \leq \left(\dfrac{\pi}{2}\dfrac{\Lip R_1}{\inf R_1} +1 \right) \, |\psi(\theta)| \end{array}$$
which is \eqref{hpsi} with $B_1 = \dfrac{\pi}{2}\dfrac{\Lip R_1}{\inf R_1} +1$.

Second, we recall that $h=(R_1-\phi)^+$, thus $\phi+h=R_1$ on any point where $h \neq 0$. This leads to
$$ \left|\int_0^{\pi/2} (\phi+h)h\right| = \left| \int_0^{\pi/2} R_1 h \right| \leq \frac{\pi}{2} (\sup R_1) ||h||_\infty   $$
$$ \text{and} \qquad \left|\int_0^{\pi/2} (\phi+h)'h'\right| = \left|\int_0^{\pi/2} R_1'h'\right| = \left| [R_1'h]_0^{\pi/2}-\int_0^{\pi/2} R_1''h\right| \leq (2\sup R_1'+||R_1''||_1) ||h||_\infty $$
We get \eqref{estimscal} with $B_2=\left(\dfrac{\pi}{2}\sup R_1 + 2\Lip R_1 + ||R_1''||_{L^1}\right)$.

Third, we write
\begin{equation} ||\phi||_{H^1}^2 = ||\tilde{\phi}||_{H^1}^2 + ||h||_{H^1}^2+2\langle\tilde{\phi},h\rangle \label{phiphitildeh} \end{equation}
Since $\tilde{\phi} \geq R_1$ on $(0,\pi/2)$ and thanks to the Lemma \ref{phimoinsphi}, we have $||\tilde{\phi}||_{H^1}^2 \geq ||\tilde{\phi}-\Phi||_{H^1}^2+K$. On the other hand, by using \eqref{hpsi} and \eqref{estimscal}, we have
$$ \langle\tilde{\phi},h\rangle \geq -B_2||h||_\infty \geq -B_1B_2||\psi||_\infty $$
We insert into \eqref{phiphitildeh} and skip $||h||_{H^1}^2$ since it is non-negative to get
$$ ||\phi||_{H^1}^2 \geq K+||\tilde{\phi}-\Phi||_{H^1}^2 - 2B_1B_2||\psi||_\infty  $$
thus \eqref{phih1} holds with $B_3=2B_1B_2$.

{\it \underline{Step 3:} the inequality \eqref{deuxtiers} holds if $||\phi'||_{L^2}$ is large enough.} We start from
$$ H(\phi,\psi) = ||\phi||_{H^1}^2+||\psi||_{H^1}^2 + 2 \int_0^{\pi/2} \phi\psi'-2\int_0^{\pi/2}\psi\phi' 
 = ||\phi||_{H^1}^2+||\psi||_{H^1}^2-4\int_0^{\pi/2} \psi\phi' - 2[\phi\psi]_{0}^{\pi/2} $$
First, the condition on $(\phi,\psi)$ implies that $||\phi||_\infty,||\psi||_\infty \leq \sup{R_2}$ so that
$$ |[\phi\psi]_{0}^{\pi/2}| \leq 2(\sup{R_2})^2 $$
On the other hand,
$$ \left|\int_0^{\pi/2} \psi\phi'\right| \leq ||\psi||_\infty \sqrt{\frac{\pi}{2}} ||\phi'||_{L^2} \leq \sup{R_2} \sqrt{\frac{\pi}{2}} ||\phi'||_{L^2} $$
This leads to
$$ H(\phi,\psi) \geq ||\phi||_{H^1}^2+||\psi||_{H^1}^2 - 4\sup{R_2} \sqrt{\frac{\pi}{2}} ||\phi'||_{L^2} - 4(\sup{R_2})^2 $$
$$ \geq \frac{1}{2}||\phi||_{H^1}^2+\left(\frac{1}{2}||\phi'||_{L^2}^2-4\sup{R_2}\sqrt{\frac{\pi}{2}} ||\phi'||_{L^2} - 4(\sup{R_2})^2\right) $$
By using \eqref{phih1}, we obtain
$$  H(\phi,\psi) \geq \frac{1}{2}(K+||\tilde{\phi}-\Phi||_{H^1}^2-B_3||\psi||_\infty)+\left(\frac{1}{2}||\phi'||_{L^2}^2-4\sup{R_2}\sqrt{\frac{\pi}{2}} ||\phi'||_{L^2} - 4(\sup{R_2})^2\right)$$
and, since $|\psi| \leq \sqrt{\phi^2+\psi^2} \leq R_1$, we have
$$ H(\phi,\psi) \geq \frac{1}{2}(K+||\tilde{\phi}-\Phi||_{H^1})+\left(\frac{1}{2}||\phi'||_{L^2}^2-4\sup{R_2}\sqrt{\frac{\pi}{2}} ||\phi'||_{L^2} - \left(4(\sup{R_2})^2+\frac{B_3}{2}\sup R_2\right)\right)  $$
The announced estimate \eqref{deuxtiers} holds as soon as the term in brackets is greater that $\dfrac{K}{2}$, which is true provided that $||\phi'||_{L^2} \geq B_4$ where $B_4$ is the largest root of the polynom
$$ \frac{1}{2}X^2-4\sup~R_2\sqrt{\frac{\pi}{2}}X-\left(4(\sup{R_2})^2+\frac{B_3}{2}\sup R_2+\frac{K}{2}\right) $$
and $B_4$ only depends of $\Omega'$.

{\it \underline{Step 4:} case $||\phi'||_{L^2} \leq B_4$.} In this case, we still have
$$ H(\phi,\psi) = ||\phi||_{H^1}^2+||\psi||_{H^1}^2 -4\int_0^{\pi/2} \psi\phi' - 2[\phi\psi]_{0}^{\pi/2} $$
$$ \text{with} \qquad \left|\int_0^{\pi/2} \psi\phi'\right| \leq ||\psi||_{\infty}\sqrt{\frac{\pi}{2}} ||\phi'||_{L^2} \leq \sqrt{\frac{\pi}{2}} B_4 ||\psi||_\infty $$
$$ \text{and} \qquad |[\phi\psi]_{0}^{\pi/2}| \leq 2||\phi||_\infty||\psi||_\infty \leq 2\sup{R_2} ||\psi||_\infty  $$
This leads to
$$ H(\phi,\psi) \geq ||\phi||_{H^1}^2+||\psi||_{H^1}^2 - \left(\sqrt{\frac{\pi}{2}} B_4+2\sup{R_2}\right)||\psi||_\infty $$
$$ \geq K+||\tilde{\phi}-\Phi||_{H^1}^2+||\psi||_{H^1}^2-B_5||\psi||_\infty $$
where we have again used \eqref{phih1} and set $B_5=\left(\sqrt{\frac{\pi}{2}} B_4+2\sup{R_2}\right)+B_3$, which only depends on $\Omega'$.

It now remains to estimate $||\psi||_{H^1}^2-B_5||\psi||_\infty$ from below with $-||\psi||_{L^2}^{2/3}$. The condition on $(\phi,\psi)$ implies that $\psi(0) \geq 0$ and $\psi(\pi/2) \leq 0$, so that there exists $t_0$ such that $\psi(t_0) = 0$. We then have
$$ \psi^2(t) = \int_{t_0}^t \ddt (\psi^2) = \int_{t_0}^t 2\psi\psi' \leq 2||\psi||_{L^2}||\psi'||_{L^2} \quad \text{thus} \quad ||\psi||_\infty \leq \sqrt{2} \sqrt{||\psi||_{L^2}||\psi'||_{L^2}} $$
We use the Young inequality
$$ab \leq \frac{(\alpha a)^p}{p}+\frac{(b/\alpha)^q}{q}  \qquad \text{for } \frac{1}{p}+\frac{1}{q} = 1 \text{ and } \alpha > 0$$
with $p=4$, $q=4/3$, $a = \sqrt{||\psi'||_{L^2}}$ and $b = \sqrt{||\psi||_{L^2}}$, to get
$$ ||\psi||_\infty \leq \frac{\sqrt{2}\alpha^4}{4} ||\psi'||_{L^2}^2 + \frac{3\sqrt{2}}{4\alpha^{4/3}} ||\psi||_{L^2}^{2/3}  $$
We deduce
$$ ||\psi'||_{H^1}^2-B_5||\psi||_\infty \geq \left(1-\frac{\sqrt{2}B_5\alpha^4}{4}\right)||\psi'||_{L^2}-\frac{3\sqrt{2}B_5}{4\alpha^{4/3}}||\psi||_{L^2}^{2/3}  $$
By choosing $\alpha$ so that $\dfrac{\sqrt{2}B_5\alpha^4}{4} = 1$, we obtain
$$||\psi||_{H^1}^2 -B_5||\psi||_\infty \geq -B||\psi||_{L^2}^{2/3} $$
where $B = \dfrac{3\sqrt{2}B_5}{4\alpha^{4/3}}$ depends only on $\Omega'$ and $K$. This achieves the proof.\end{proof}

\noindent We will also need the following estimate on the first term of the expression \eqref{fepsteps}.

\begin{lem} Let $T$ be a transport map from $\mu$ to $\nu$. We write
$$ T(x) = \phi(x) \frac{x}{|x|} + \psi(x) \frac{x}{|x|}^{\!\perp} $$
Then, for a.e.~$x$,
\begin{equation} |T(x)-x|-|T(x)|+|x| \geq A |x| \psi^2(x) \label{xpsi2} \end{equation}
for some constant $A$ which only depends on $\Omega'$.\end{lem}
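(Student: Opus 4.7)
The plan is to derive a single closed-form identity for $|T(x)-x|-|T(x)|+|x|$ in terms of $r=|x|$, $\phi(x)$, $\psi(x)$, and then use $T(x)\in\Omega'$ to bound from above the denominator that appears. The crucial structural feature is that the left-hand side vanishes when $\psi=0$ (because then $T(x)$ lies on the ray from the origin through $x$ and the triangle inequality is saturated), so we expect it to factor as $\psi^2$ times an expression bounded in terms of the geometry of $\Omega'$.

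Set $a=|T(x)-x|$ and $b=|T(x)|$. The decomposition of $T(x)$ in the orthonormal basis $(x/|x|,(x/|x|)^\perp)$ gives $a^2=(\phi-r)^2+\psi^2$ and $b^2=\phi^2+\psi^2$. Writing each as a difference of squares yields
$$a-(\phi-r)=\frac{\psi^2}{a+\phi-r}\qquad\text{and}\qquad b-\phi=\frac{\psi^2}{b+\phi}.$$
Adding these,
$$a+b+r-2\phi=\psi^2\left[\frac{1}{a+\phi-r}+\frac{1}{b+\phi}\right].$$
Separately, from $a^2-b^2=r(r-2\phi)$ we get $a-b+r=\dfrac{r}{a+b}(a+b+r-2\phi)$, which combined with the previous identity gives the master formula
$$|T(x)-x|-|T(x)|+|x|=\frac{r\psi^2}{a+b}\left[\frac{1}{a+\phi-r}+\frac{1}{b+\phi}\right].$$

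Both reciprocals are positive when $\psi\neq0$. Indeed, since $x$ and $T(x)$ both have polar angle in $(0,\pi/2)$, $\phi=T(x)\cdot x/|x|>0$, and $b\geq \inf R_1>1\geq r$ yields $b+\phi>0$; for $\psi\neq0$ one has strictly $a>|\phi-r|$, whence $a+\phi-r>0$. (When $\psi=0$, the master identity reads $0=0$ and the inequality is trivial.) To conclude, I retain only the second reciprocal and use $T(x)\in\Omega'$ together with $|x|\leq1$: $b\leq \sup R_2$, $|\phi|\leq b$, and $a\leq b+r\leq\sup R_2+1$, so that
$$|T(x)-x|-|T(x)|+|x|\;\geq\;\frac{r\psi^2}{(a+b)(b+\phi)}\;\geq\;\frac{r\psi^2}{(2\sup R_2+1)(2\sup R_2)},$$
which is exactly \eqref{xpsi2} with $A=1/((2\sup R_2+1)(2\sup R_2))$, depending only on $\Omega'$. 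The only delicate point is the positivity of the denominators; this is handled cleanly by the geometric hypotheses $\inf R_1>1\geq \sup_{x\in\Omega}|x|$ and by both domains sitting in the open first quadrant. No finer estimate is needed since the quadratic vanishing in $\psi$ is already built into the algebraic identity.
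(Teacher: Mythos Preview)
Your proof is correct and follows essentially the same route as the paper. Both arguments reduce to the identity $a-b+r=\dfrac{r}{a+b}(a+b+r-2\phi)$ and then bound the numerator from below by $b-\phi=\dfrac{\psi^2}{b+\phi}$, yielding the same constant $A=\dfrac{1}{(2\sup R_2+1)(2\sup R_2)}$; your ``master formula'' simply packages the two pieces $a-(\phi-r)$ and $b-\phi$ as an exact sum before dropping the first, whereas the paper drops $a-(\phi-r)\geq0$ directly from the numerator.
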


\begin{proof} We compute:
$$ |T(x)-x|-|T(x)|+|x| = \frac{|T(x)-x|^2-|T(x)|^2}{|T(x)-x|+|T(x)|} + |x| $$
We have $|T(x)-x|^2 = (\phi(x)-|x|)^2+\psi(x)^2$ and $|T|^2=\phi^2+\psi^2$, so that
$$ |T(x)-x|-|T(x)|+|x| = \frac{|x|^2-2|x|\phi(x)}{|T(x)-x|+|T(x)|}+|x| = |x|\,\frac{|x|+|T(x)-x|+|T(x)|-2\phi(x)}{|T(x)-x|+|T(x)|}$$
We remark that
$$ |T(x)-x|-\phi(x)+|x| = \sqrt{(\phi(x)-|x|)^2+\psi(x)^2}-(\phi(x)-|x|) \geq 0 $$
$$ \text{thus} \quad |T(x)-x|-|T(x)|+|x| \geq |x| \frac{|T(x)|-\phi(x)}{|T(x)-x|+|T(x)|} = |x| \frac{|T(x)|^2-\phi(x)^2}{(|T(x)-x|+|T(x)|)(|T(x)|+\phi(x))}  $$
Since $x \in \Omega$ and $T(x) \in \Omega'$, we have
$$ |T(x)-x|+|T(x)| \leq 2|T(x)|+|x| \leq 2\sup{R_2} + 1 $$
$$ \text{and} \qquad |T(x)|+\phi(x) \leq 2|T(x)| \leq 2\sup{R_2} $$
On the other hand, $|T(x)|^2-\phi(x)^2 = \psi(x)^2$. This leads to the result with $ A = \dfrac{1}{(2\sup{R_2}+1)(2\sup{R_2})}  $ \end{proof}

\noindent The estimate \eqref{xpsi2} leads to
$$ \int_{\Omega}(|T_\eps(x)-x|-|T_\eps(x)|+|x|) f(x) \dx x \geq A\inf{f}\, \int_0^1 ||\psi_\eps(r,\cdot)||_{L^2}^2 \, r \dx r $$
and the estimate \eqref{deuxtiers} to
$$ \int_\delta^1 (H(\phi_\eps(r,\cdot),\psi_\eps(r,\cdot))-K) \frac{\text{d}r}{r} \geq  \int_\delta^1 \left(-B||\psi_\eps(r,\cdot)||_{L^2}^{2/3}+\frac{1}{2}||\tilde{\phi}_\eps(r,\cdot)-\Phi||_{H^1}^2\right) \, \frac{\text{d}r}{r}  $$
where we again have set $\tilde{\phi}_\eps=\max(R_1,\phi_\eps)$. By inserting into \eqref{fepsteps}, we have
\begin{multline} F_\eps(T_\eps) \geq \frac{A\inf{f}}{\eps} \int_0^1 ||\psi_\eps(r,\cdot)||_{L^2}^2 \, r^2 \dx r - B\int_\delta^1||\psi_\eps(r,\cdot)||_{L^2}^{2/3} \frac{\text{d}r}{r} 
\\ + \frac{1}{2}\,\int_\delta^1 (H(\phi_\eps(r,\cdot),\psi_\eps(r,\cdot))-K+B||\psi_\eps(r,\cdot)||_{L^2}^{2/3}) \,\frac{\text{d}r}{r} + \int_\delta^1 ||\partial_r\phi_\eps(r,\cdot)||_{L^2}^2 \, r \dx r \label{fepstepsbis} \end{multline}
Let us denote by $X_\eps = \dfrac{1}{\eps} \displaystyle\int_0^1 ||\psi_\eps(r,\cdot)||_{L^2}^2 r^2 \dx r$. By the H\"older inequality applied with respect to the measure with density $1/r$ on $(\delta,1)$, we have
$$ \int_\delta^1||\psi_\eps(r,\cdot)||_{L^2}^{2/3} \,\frac{\text{d}r}{r} = \int_\delta^1(||\psi_\eps(r,\cdot)||_{L^2}^2 r^3)^{1/3} \, \frac{1}{r} \, \frac{\text{d}r}{r} \leq \left(\int_\delta^1 ||\psi_\eps||_{L^2}^2 r^3 \,\frac{\text{d}r}{r} \right)^{1/3} \, \left(\int_\delta^1 \frac{1}{r^{4/3}} \,\frac{\text{d}r}{r} \right)^{3/4}  $$
$$ \text{with} \qquad  \int_\delta^1 ||\psi_\eps||_{L^2}^2 r^3 \,\frac{\text{d}r}{r} \leq \eps X_\eps \quad \text{and} \quad \int_\delta^1 \frac{1}{r^{4/3}} \,\frac{\text{d}r}{r} = \frac{3}{4}\left(\frac{1}{\delta^{4/3}}-1\right) \leq \frac{3}{2\delta^{4/3}}  $$
which leads to
$$ \int_\delta^1||\psi_\eps(r,\cdot)||_{L^2}^{2/3} \,\frac{\text{d}r}{r} \leq (\eps X_\eps)^{1/3} \left(\frac{3}{2\delta^{4/3}}\right)^{3/4} =  \sqrt{\frac{3\sqrt{3}}{2\sqrt{2}}}\, X_\eps^{1/3}  $$
since $\delta = \eps^{1/3}$. We insert into \eqref{fepstepsbis} to obtain
\begin{multline} F_\eps(T_\eps) \geq (A\inf{f}) X_\eps -B' \,X_\eps^{1/3} \\
+ \int_\delta^1 (H(\phi_\eps(r,\cdot),\psi_\eps(r,\cdot))-K+B||\psi_\eps(r,\cdot)||_{L^2}^{2/3}) \,\frac{\text{d}r}{r} + \int_\delta^1 ||\partial_r\phi_\eps(r,\cdot)||_{L^2}^2 \, r \dx r \label{fepsteps2} \end{multline}
where $B' = \sqrt{\frac{3\sqrt{3}}{2\sqrt{2}}} \, B$.

Let us assume that $(F_\eps(T_\eps))_\eps$ is bounded by a positive constant $M$. This implies that $(X_\eps)_\eps$ is bounded by some constant $M'$ (otherwise the term $(A\inf{f}) X_\eps -B' \,X_\eps^{1/3}$ would be unbounded as $\eps\to 0$, and the other term is positive), thus
$$ \int_0^1 ||\psi_\eps(r,\cdot)||_{L^2}^2 r^2 \dx r \leq M'\eps $$
and $\psi_\eps \to 0$ a.e.~on $\Omega$. Since $(X_\eps)_\eps$ and $(F_\eps(T_\eps))_\eps$ are bounded, \eqref{fepsteps2} provides
$$ \int_\delta^1 (H(\phi_\eps(r,\cdot),\psi_\eps(r,\cdot)-K+B||\psi_\eps(r,\cdot)||_{L^2}^{2/3}) \,\frac{\text{d}r}{r} + \int_\delta^1 ||\partial_r\phi_\eps(r,\cdot)||_{L^2}^2 \, r \dx r \leq M'' $$
for some constant $M''$ which does not depend on $\eps$. We now use the estimate \eqref{deuxtiers} to get
$$ \frac{1}{2}\int_\delta^1 ||\tilde{\phi}_\eps-\Phi||_{H^1}^2\, \frac{\text{d}r}{r} + \int_\delta^1 ||\partial_r\phi_\eps(r,\cdot)||_{L^2}^2 \, r \dx r \leq M'' $$
We thus have a $L^2$-loc bound on $\partial_r\phi_\eps$ and on $\partial_\theta\tilde{\phi}_\eps$, but since $\tilde{\phi}(r,\theta)=\max(R_1(\theta),\phi(r,\theta))$, the bound on $\partial_r\phi_\eps$ implies a bound on $\partial_r\tilde{\phi}_\eps$. Therefore, the family $(\tilde{\phi}_\eps)_\eps$ is bounded in $H^1_{loc}(\Omega)$, and then there exists $\eps_k \to 0$ and $\tilde{\phi}$ such that $\tilde{\phi}_\eps \to \tilde{\phi}$ a.e.~on $\Omega$. But we recall that the estimation \eqref{hpsi} still holds and provides
$$ |\phi_{\eps_k}-\tilde{\phi}_{\eps_k}| \leq B_1|\psi_\eps| \to 0 $$
which leads to $\phi_{\eps_k} \to \tilde{\phi}$ a.e.~on $\Omega$. If we set now $T(x)=\tilde{\phi}\dfrac{x}{|x|}$, we have proven that $T_{\eps_k} \to T$ a.e.~on~$\Omega$; since $|T_\eps| \leq \sup R_2$ for any $\eps$, this convergence also holds in $L^2(\Omega)$. This proves the first statement of Theorem~\ref{maintheo}.

Assume now that $(T_\eps)_\eps$ is a family of transport maps converging to some $T$ for the $L^2$-norm on $\Omega$. We deduce from \eqref{fepsteps2} that, if we set $C=- \inf\limits_{X>0} (A\inf{f}X^3-B'X)$, which only depends on $\Omega'$, we have
\begin{equation} F_\eps(T_\eps) \geq -C+\int_\delta^1 (H(\phi_\eps(r,\cdot),\psi_\eps(r,\cdot))-K+B||\psi_\eps(r,\cdot)||_{L^2}^{2/3}) \,\frac{\text{d}r}{r} + \int_\delta^1 ||\partial_r\phi_\eps(r,\cdot)||_{L^2}^2 \, r \dx r \label{fepstepsC} \end{equation}
Assuming that $(F_\eps(T_\eps))_\eps$ is bounded, the above computations give a $H^1$-loc bound for $(\phi_\eps)_\eps$, thus
$$ \liminf\limits_{\eps \to 0} \int_\delta^1 ||\partial_r\phi_\eps(r,\cdot)||_{L^2}^2 \, r \dx r \geq \int_0^1 ||\partial_r\phi(r,\cdot)||_{L^2}^2 \, r \dx r $$
since this functional is lower semi-continuous for the weak convergence in $H^1(\Omega)$. On the other hand, the semi-continuity of $H$ provides
$$ \liminf\limits_{k \to +\infty} (H(\phi_\eps(r,\cdot),\psi_\eps(r,\cdot))-K+B||\psi_\eps(r,\cdot)||_{L^2}^{2/3}) \geq G(\phi(r,\cdot)) $$
for a.e.~$r\in (0,1)$, and the estimate \eqref{deuxtiers} shows also that
$$ H(\phi_\eps(r,\cdot),\psi_\eps(r,\cdot))-K+B||\psi_\eps(r,\cdot)||_{L^2}^{2/3} \geq 0 $$
We thus can apply the Fatou's lemma to get from \eqref{fepstepsC}
$$ \liminf_{\eps \to 0} \geq -C+\int_0^1 G(\phi(r,\cdot)) \,\frac{\text{d}r}{r} + \int_0^1 ||\partial_r\phi(r,\cdot)||_{L^2}^2 \, r \dx r  $$
as announced.

\paragraph{Remark.} If we choose to set $\delta = \lambda\eps^{1/3}$, where $\lambda$ has to be precised (and it could possibly depend on $\eps$), the expression~\eqref{fepsteps} becomes
\begin{multline} F_\eps(T_\eps) = \frac{1}{\eps} \int_{\Omega_\delta} (|T_\eps(x)-x|-|T(x)|+|x|) f(x) \dx x + \int_{\Omega_\delta} |DT_\eps|^2 - K\log\lambda \\
+ \frac{1}{\eps} \int_{\Omega\setminus\Omega_\delta} (|T_\eps(x)-x|-|T(x)|+|x|) f(x) \dx x - B\int_\delta^1 ||\psi_\eps(r,\cdot)||_{L^2}^{2/3} \, r \dx r \\
+ \int_\delta^1 (H(\phi_\eps(r,\cdot),\psi_\eps(r,\cdot))-K+B||\psi_\eps(r,\cdot)||_{L^2}^{2/3}) \frac{\text{d}r}{r}+\int_\delta^1 ||\partial_r\phi_\eps(r,\cdot)||_{L^2}^2 \, r \dx r \label{fepstepster} \end{multline}
By using the above estimates \eqref{deuxtiers} and \eqref{xpsi2}, we get that the second line is this time bounded from below~by
$$ A (\inf f) X_\eps - \frac{B'}{\lambda} (X_\eps)^{1/3} $$
which is itself bounded from below by $-C_\lambda = \inf\{ A(\inf f )X-\frac{B'}{\lambda} X^{1/3}\}$; we notice that $C_\lambda$ goes to $0$ as $\lambda\to+\infty$. Let us now compute the first line of \eqref{fepstepster}:
\begin{multline*} \frac{1}{\eps} \int_{\Omega_\delta} (|T_\eps(x)-x|-|T(x)|+|x|) f(x) \dx x + \int_{\Omega_\delta} |DT_\eps|^2 - K\log\lambda
\\ = \frac{\delta^2}{\eps} \int_{\Omega} (|U_\eps(y)-\delta y|-|U_\eps(y)|+|\delta y|) f(\delta y) \dx y + \int_\Omega |DU_\eps|^2 -K\log\lambda \end{multline*}
where we have set $x=\delta y$ and $U_\eps(y) = T_\eps(\delta y)$. Now we use the following expansion
$$ |U_\eps(y)-\delta y|-|U_\eps(y)| = \delta \left(|y|-y\cdot \frac{U_\eps(y)}{|U_\eps(y)|}\right)+o(\delta^2) $$
and recall that $\delta = \lambda \eps^{1/3}$, to get
\begin{multline*} \frac{\delta^2}{\eps} \int_{\Omega} |U_\eps(y)-\delta y|-|U_\eps(y)|+|\delta y| f(\delta y) \dx y + \int_\Omega |DU_\eps|^2 -K\log\lambda \\ 
= \int_\Omega \lambda^3\left(|y|-y\cdot \frac{U_\eps(y)}{|U_\eps(y)|}\right) f(\delta y) \dx y + \int_\Omega |DU_\eps|^2-K\log\lambda + o\left(\lambda^4\eps^{4/3}\right)  \end{multline*}
By choosing, for instance, $\lambda = \eps^{-1/4}$, we get that $\lambda^4\eps^{4/3}\to 0$ and $\lambda^3 f(\delta y) \sim \lambda^3 f(0)$ as $\eps\to 0$. Formally, the leading term of the last expansion is then bounded from below by
$$ - C'_\lambda = \inf\left\{ f(0) \int_{\Omega_1} \lambda^3\left(|y|-y\cdot \frac{U(y)}{|U(y)|}\right)\dx y + \int_{\Omega_1} |DU|^2-K\log\lambda\;:\; U:\Omega_1\to\Omega' \right\}$$
(note the use of the domain $\Omega_1$, which is nothing but a rescaling of the small domain $\Omega_\delta$ close to the origin).
Recalling that $C_\lambda \to 0$, we then claim, since the third line of \eqref{fepstepster} is lower semi-continuous,
$$ \liminf\limits_{\eps \to 0} F_\eps \geq F(T)-\lim_{\lambda\to+\infty} C'_\lambda. $$
We actually conjecture that the $\Gamma$-limit is exactly of this form, possibly modifying the definition of $C'_\lambda$ adding additional constraints on $U$. Indeed, the condition $U\in \Omega'$ is of course necessary, but we can also expect that conditions on the outer boundary of $\Omega_1$, i.e. for $|y|=1$, could be imposed, so as to glue the behavior of $U$ on $\Omega_1$, which corresponds to the behavior of $T$ on $\Omega_\delta$ with the rest of the domain $\Omega$. In particular we expect that the result should be obtained by adding the boundary condition $U(y)=\Phi(\theta)$ for every $y$ on the unit circle, where $\theta$ denotes the angle of $y$ in polar coordinates. These considerations support the conjecture that we mentioned just after the statement of Theorem~\ref{maintheo}.

\subsection{Construction of family of transport maps with equi-bounded energy \label{construction}}

The last point of the proof of Theorem \ref{maintheo} consists in building a family of maps $(T_\eps)_\eps$ such that $(F_\eps(T_\eps))_\eps$ is bounded. The sketch of the proof is the following: starting from a fixed transport map $T = \phi \, \dfrac{x}{|x|}$ satisfying with $\phi(0,\cdot) = \Phi$ (that we call ``the original $T$'' in the following) and which is regular enough except around the origin, we build each $T_\eps$ by modifying $T$ only on $\Omega_\delta$.

\paragraph{Step 1: construction of the original transport map.} We set $T(x) = \phi(r,\theta) \dfrac{x}{|x|}$, where $\phi$ is built as follows:
\begin{itemize}
\item $\phi(0,\theta) = \Phi(\theta)$, and $\phi(\cdot,\theta)$ is increasing and sends the one-dimensional measure $\mu_\theta$ (the starting measure $\mu$ concentrated on the transport ray with angle $\theta$) onto $\nu_\theta/2$ (where $\nu_\theta$ is the target measure on the same transport ray), until the radius $\rho_1$ such that $\phi(\rho_1,\theta) = R_2(\theta)$;
\item starting from this radius $\rho_1$, $\phi(\cdot,\theta)$ is decreasing with the same source and target measure, until the radius $\rho_2$ such that, again, $\phi(\rho_2,\theta) = \Phi(\theta)$. Therefore, on the interval $(\rho_1,\rho_2)$, $\phi(\cdot,\theta)$ sends $\mu_\theta$ onto $\nu_\theta|_{(\Phi(\theta),R_2(\theta))}$;
\item on the last interval (if it is non-empty, which corresponds to $\Phi(\theta) > R_1(\theta)$), $\phi$ is still decreasing and sends $\mu_\theta$ onto $\nu_\theta|_{(R_1(\theta),\Phi(\theta)}$
\end{itemize}

\noindent Precisely, we fix $\theta$ and the expressions of $\mu_\theta$, $\nu_\theta$ are
$$ \dx\mu_\theta(r) = rf(r,\theta) \dx r \qquad \text{and} \qquad \dx\nu_\theta(r) = rg(r,\theta) \dx r $$
which have both same mass on $(0,1)$ and $(R_1(\theta),R_2(\theta))$ respectively. Now we define successively $\rho_1(\theta)$ and $\rho_2(\theta)$ by
$$ \int_{0}^{\rho_1(\theta)} \dx\mu_\theta = \int_{\Phi(\theta)}^{R_2(\theta)} \frac{1}{2}\dx\nu_\theta \qquad \text{and} \qquad \int_{\rho_1(\theta)}^{\rho_2(\theta)} \dx\mu_\theta = \int_{\Phi(\theta)}^{R_2(\theta)} \frac{1}{2}\dx\nu_\theta$$
which are proper definitions thanks to the intermediate value theorem, and imply
$$ \int_{\rho_2(\theta)}^1 \dx\mu_\theta = \int_0^{\Phi(\theta)} \dx\nu_\theta $$
Thus, we have the equality between masses:
$$ \mu_\theta(0,\rho_1(\theta)) = \mu_\theta(\rho_1(\theta),\rho_2(\theta)) = \frac{1}{2}\nu_\theta(\Phi(\theta),1)  \quad \text{and} \quad \mu_\theta(\rho_2(\theta),1) = \nu_\theta(0,\Phi(\theta)) $$
and the measures $\mu_\theta$, $\nu_\theta$ are absolutely continuous on these intervals. We now define the function $\phi(\cdot,\theta)$ as being:
\begin{itemize}
\item on the interval $(0,\rho_1(\theta))$, the unique increasing map $(0,\rho_1(\theta) \to (\Phi(\theta),1)$ sending $\mu_\theta$ onto $\frac{1}{2}\nu_\theta$;
\item on the interval $(\rho_1(\theta),\rho_2(\theta))$, the unique decreasing map $(\rho_1(\theta),\rho_2(\theta)) \to  (\Phi(\theta),1)$ sending $\mu_\theta$ onto $\frac{1}{2}\nu_\theta$;
\item on the interval $(\rho_2(\theta),1)$ (if this interval is not empty), the unique decreasing map $(\rho_2(\theta),1) \to (0,\Phi(\theta))$ sending $\mu_\theta$ to $\nu_\theta$
\end{itemize}
It is easy to check that $\phi(\cdot,\theta)$, defined on the whole $(0,1)$, sends globally $\mu_\theta$ onto $\nu_\theta$. As a consequence, the two-dimensional valued function
$$ T : x=(r,\theta) \in \Omega \mapsto \phi(r,\theta)\frac{x}{|x|} \in \Omega'  $$
is a transport map from $\mu$ to $\nu$.

\paragraph{Step 2: estimates on $\phi$ around the origin.} As above, the principle consists in modifying $T$ on $\Omega_\delta=\Omega \cap B(0,\delta)$, with $\delta = \eps^{1/3}$. In order to obtain again a transport map from $\mu$ to $\nu$, we will have to build a map $S: \Omega_\delta \to T(\Omega_\delta)$ satisfying image-measure constraints; moreover, we also will must guarantee enough regularity on~$S$. For this last point, the following properties of~$T$ will be useful:

\begin{prop} There exists some positive constants $c,C$ depending only on $\Omega'$, $f$, $g$ such that, for $r$ small enough:
\begin{itemize}
\item for any $\theta\in (0,\pi/2)$, 
\begin{equation}  cr^2 \leq \phi(r,\theta)-\Phi(\theta) \leq C r^2  \label{alphadelta1} \end{equation}
\item the function $\phi(r,\cdot)$ is Lipschitz, and
\begin{equation} \Lip (\phi(r,\cdot)-\Phi) \leq C r^2 \label{alphadelta2} \end{equation}
\end{itemize}\end{prop}


\begin{proof} From the fact that $\Phi \leq \sup R_1$ and from the definition of $\rho_1$, we have
$$ \frac{\rho_1(\theta)^2}{2} \sup f \geq \int_0^{\rho_1(\theta)} s f(r,\theta) \dx s = \frac{1}{2}\int_{\Phi(\theta)}^{R_2(\theta)} sg(s,\theta)\dx s \geq \frac{(\inf R_2)^2-(\sup R_1)^2}{4} \inf g $$
which implies that $\rho_1$ is bounded from below by a positive constant $\delta_0$; we will prove the estimates that we want for $r\leq \delta_0$.

First of all, for any $r \leq \delta_0$ (in particular, such an $r$ is smaller than any $\rho_1(\theta)$), the function $\phi(r,\cdot)$ satisfies
$$ \frac{1}{2} \int_{\Phi(\theta)}^{\phi(r,\theta)} \dx\nu_\theta = \int_0^r \dx\mu_\theta $$
Since the respective densities of $\mu_\theta$, $\nu_\theta$ are $s\mapsto sf(s,\theta)$ and $s\mapsto sg(s,\theta)$, we infer
$$ (\inf g) (\inf R_1) (\phi(r,\theta)-\Phi(\theta)) \leq \int_{\Phi(\theta)}^{\phi(r,\theta)} \dx\nu_\theta \leq (\sup g)(\sup R_2) (\phi(r,\theta)-\Phi(\theta))  $$
$$ \text{and} \qquad \frac{r^2}{2}\inf f \leq \int_0^r \dx\mu_\theta \leq \frac{r^2}{2} \sup f   $$
These inequalities immediately leads to \eqref{alphadelta1}.

Let us now introduce the following functions, which are the cumulative distribution functions of $\mu_\theta$ and~$\nu_\theta/2$:
$$ \tilde F(r,\theta) = \int_0^r \dx\mu_\theta = \int_0^r sf(s,\theta)\dx s \qquad \text{and} \qquad \tilde G(r,\theta) = \frac{1}{2}\int_{R_1(\theta)}^r \dx\nu_\theta = \frac{1}{2}\int_{R_1(\theta)}^r sg(s,\theta)\dx s$$
We notice that, for $r\leq \delta_0$, the function $\theta\mapsto \phi(r,\theta)$ is defined by
\begin{equation} \tilde F(r,\theta) = \tilde G(\phi(r,\theta),\theta) - \tilde G(\Phi(\theta),\theta) \label{phirtheta} \end{equation}
In other words, $\phi(r,\theta)$ is the image by the inverse of the map $R\mapsto \tilde G(R,\theta)$ of the point $F(r,\theta)+G(\Phi(\theta),\theta)$. But, from the definition of $G$, we infer that it is $C^1$ w.r.t.~its first variable and its derivative is bounded from below; moreover, all the maps that we consider are Lipschitz w.r.t.~$\theta$. By the inverse function theorem, we deduce that $\phi(r,\cdot)$ is not only well-defined but also Lipschitz. Then, computing the derivative of the equality~\eqref{phirtheta} (which is possible for a.e.~$\theta$ since all the functions are Lipschitz w.r.t.~$\theta$) gives
$$ \partial_2 \tilde F(r,\theta) = \partial_1 \tilde G(\phi(r,\theta),\theta) \partial_2 \phi(r,\theta) + \partial_2\tilde G (\phi(r,\theta),\theta) - \partial_1 \tilde G(\Phi(\theta),\theta) \Phi'(\theta) - \partial_2 \tilde G(\Phi(r,\theta),\theta) $$
which leads to (we omit the dependence of $\phi$, $\Phi$ on $r,\theta$ for the simplicity of notations):
\begin{equation} \partial_1\tilde G (\phi,\theta) (\partial_2\phi - \Phi') = \partial_2\tilde F (r,\theta) + \partial_1 \Phi' (\partial_1 \tilde G (\Phi,\theta) - \partial_1 \tilde G (\phi,\theta)) - (\partial_2 \tilde G (\phi,\theta) - \partial_2 \tilde G (\Phi,\theta)) \label{partialtildeG} \end{equation}
By using the definitions of $\tilde F$, $\tilde G$, we now notice that:
\begin{itemize}
\item first, $ 0 \leq  \partial_2 \tilde F (r,\theta) = \displaystyle\int_0^r s\partial_2 f(s,\theta) \dx s \leq \dfrac{r^2}{2} \Lip f $ ;
\item second, since $\partial_1 \tilde G (r,\theta) = \dfrac{1}{2}r g(r,\theta)$ and from the fact that $g$ and $\Phi$ are Lipschitz,
$$ |\partial_1 \Phi' (\partial_1 \tilde G (\Phi,\theta) - \partial_1 \tilde G (\phi,\theta))| \leq \frac{1}{2} (\Lip \Phi)(\sup R_2)(\Lip g)|\Phi(\theta)-\phi(r,\theta)|  $$
which is controlled by $r^2$ thanks to the estimate \eqref{alphadelta1};
\item third, $\partial_2 \tilde G(r,\theta) = \dfrac{1}{2} \displaystyle\int_{R_1(\theta)}^r s\partial_2 g(s,\theta) \dx s - \dfrac{1}{2} R_1'(\theta) R_1(\theta) g(R_1(\theta),\theta)  $ thus
$$ |\partial_2 \tilde G (\phi,\theta) - \partial_2 \tilde G (\Phi,\theta)| \leq \frac{1}{2} \int_{\Phi(\theta)}^{\phi(r,\theta)} s|\partial_2 g(s,\theta)|\dx s \leq \frac{1}{2} (\sup R_2)(\Lip g)(\phi(r,\theta)-\Phi(\theta))  $$
which is, again thanks to the estimate \eqref{alphadelta1}, controlled by $r^2$.
 \end{itemize}
By inserting the three above estimates in \eqref{partialtildeG}, and keeping in mind that $\partial_1 G$ is bounded from below, it is clear that we get $|\partial_\theta (\phi(r,\theta)-\Phi(\theta))| \leq C r^2$, for any $\theta$ where this derivative exists; this achieves the proof of the estimate \eqref{alphadelta2}.
\end{proof}

\paragraph{Step 3: perturbation of the optimal $T$.} In what follows, we denote by
$$ \Omega_\delta = \Omega_1 \cap B(0,\delta) = \{x=(r,\theta) : 0 < r < \delta \text{ and } 0 < \theta < \pi/2\} $$
$$ \text{and} \qquad \Omega'_\delta = T(\Omega_\delta) = \left\{x=(r,\theta) : \Phi(\theta) < r < \phi(\delta,\theta) \text{ and } 0 < \theta < \pi/2 \right\} $$
We now denote by:
\begin{itemize}
\item $S_1 : x \in \Omega_\delta \mapsto \dfrac{x}{\delta} \in \Omega_1$ and $f_\delta(x) = f(\delta x)$. Notice that $f_\delta = \dfrac{1}{\delta^2} (S_1)_\# (f|_{\Omega_\delta})$;
\item $\Omega_2$ is the rectangle $(0,1) \times (0,\pi/2)$ and
$$ S_2 : (\lambda,\theta) \in \Omega_2 \mapsto x = (\Phi(\theta)+\lambda(\phi(\delta,\theta)-\Phi(\theta)),\theta) \in \Omega_\delta' $$
where $x$ is here written in polar coordinates. We also denote by $g_\delta = \dfrac{1}{\delta^2} \left((S_2)^{-1}\,_\# g|_{\Omega_\delta'}\right)$.
\end{itemize}
As above, $S_1$ and $S_2$ actually depend on $\delta$ but we omit the index $\delta$ for the sake of simplicity of notations. We have of course $\inf f \leq f_\delta(x) \leq \sup f$ for any $x \in \Omega_1$, and $\Lip f_\delta \leq \delta \Lip f$. On the other hand, since $S_2$ is Lipschitz and one-to-one, the Monge-Amp\`ere equation provides
$$ \det DS_2(\lambda,\theta) = \frac{g_\delta(\lambda,\theta)}{g(S_2(\lambda,\theta))} $$
\begin{equation} \text{thus} \qquad g_\delta(\lambda,\theta) = (\phi(\delta,\theta)-\Phi(\theta)) \frac{1}{\delta^2} g(S_2(\lambda,\theta)) \label{gdelta} \end{equation}
We deduce from \eqref{alphadelta1} and \eqref{gdelta} that $c \leq g_\delta \leq C$ for $c,C$ positive and independent of $\delta$. Moreover, \eqref{gdelta} provides
$$ \Lip g_\delta \leq \sup (\phi_\delta-\phi) \frac{1}{\delta^2} \Lip (g\circ S_2) + \Lip(\phi_\delta-\Phi) \frac{1}{\delta^2} \sup g $$
Again, thanks to \eqref{alphadelta1} and \eqref{gdelta} and using that $\Lip S_2$ is uniformly bounded with respect to $\delta$, we get $\Lip g_\delta \leq C$ independent of~$\delta$.

We now claim that there exists two bi-Lipschitz diffeomorphisms $\psi_1$, $\psi_2$ sending $\bar{\Omega_1}$, $\bar{\Omega_2}$ to the unit ball and satisfying the condition $(\psi_2)^{-1}\circ \psi_1(x) = \theta$ while $x$ has $(1,\theta)$ as polar coordinates. The construction that we propose is the following (see Figure~\ref{fig2} below):
\begin{itemize}
\item apply translation and rotation to transform $\Omega_1$ into the domain $U_1$ below;
\item consider the rectangle $U_2$, whose ``upper-right'' and ``lower-right'' corners coincide with the corners of $U_1$: applying Lemma~\ref{lemdiffeo} gives two maps $\alpha$, $\beta$ from $U_1$, $U_2$ to the unit ball which are Lipschitz, with Lipschitz determinant and preserve angles: in other words, any point of the right-boundary of $U_1$ with angle $\theta'$ will be sent by $\beta^{-1}\circ\alpha$ to a point of the right vertical boundary of $U_2$ with same angle $\theta'$. For $x \in \Omega_1$, we call then $\psi_1(x)$ the image by $\alpha$ of the corresponding point of $U_1$;
\item moreover, it is clear that the maps which associates to any $\theta \in (0,1)$ the angle $\theta'$ of the point of $U_1$ corresponding to the point $(1,\theta)$ of $\Omega_1$, is $C^\infty$ with $C^\infty$ inverse. We may then consider a new map $U_2 \mapsto \Omega_2$, which is affine w.r.t.~the horizontal coordinate, and so that any element of the right-side of $U_2$ with angle $\theta'$ is mapped to the element of the right-side of $\Omega_2$ with vertical coordinate $\theta$. It is clear that this map, denoted by $\tilde\beta$ is a $C^\infty$-diffeomorphism; denoting by $\psi_2 = \tilde\beta^{-1}\circ\beta$, we get the required results, namely that $\psi_2$ is bi-Lipschitz with Lipschitz Jacobian determinant and, by construction, the boundary condition on $(\psi_2)^{-1}\circ\psi_1$ is satisfied.
\end{itemize}

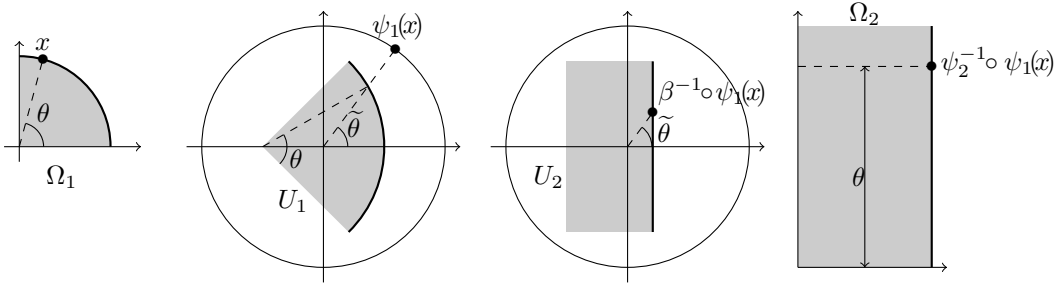
\begin{figure}[h] 
\begin{center}
\begin{tikzpicture}[xmin=-5.5,xmax=8,ymin=-2,ymax=-2,scale=0.8]

  \fill[gray!40] (-3.5,0) arc (0:90:1.5) -- (-5,0) -- cycle ;
  \draw[thick] (-3.5,0) arc (0:90:1.5) ;
  \draw [->] (-5.25,0) -- (-3,0) ;
  \draw [->] (-5,-0.25) -- (-5,1.75) ;
  \draw[dashed] (-5,0) -- (-4.61177143235,1.44888873943) ;
  \draw (-4.61177143235,1.44888873943) node {\small $\bullet$} ;
  \draw (-4.61177143235,1.44888873943) node[above] {$x$};
  \draw (-4.6,0.25) node[above]{$\theta$};
  \draw (-4.6,0) arc (0:75:0.4) ;
  \draw (-4.3,-0.5) node {$\Omega_1$};

  \draw (0,0) circle (2cm) ;
  \fill[gray!40] (0.414213562,-1.414213562) arc (-45:45:2) --  (-1,0) -- cycle ;
  \draw [->] (-2.25,0) -- (2.25,0) ;
  \draw [->] (0,-2.25) -- (0,2.25) ; 
  \draw[dashed] (-1,0) -- (0.732050808,1) ;
  \draw[dashed] (0,0) -- (1.18138100675,1.61379646582) ;
  \draw (1.18138100675,1.61379646582)  node {\small $\bullet$} ;
  \draw (1.23,1.62) node[above] {$\psi_1\!(\!x\!)$};
  
  \draw[thick] (0.414213562,1.414213562) arc (45:-45:2) ;
  \draw (-0.71715728752,-0.28284271247) arc (-45:30:0.4) ;
  \draw (-0.45,-0.5) node[above]{$\theta$};
  \draw (0.4,0) arc (0:53.7939689:0.4) ;
  \draw (0.5,0) node[above]{$\tilde\theta$};
  \draw (-0.5,-0.9) node {$U_1$};

  \draw (5,0) circle (2cm) ;
  \fill[gray!40] (4,-1.414213562) rectangle (5.414213562,1.414213562) ;
  \draw[thick] (5.414213562,1.414213562) -- (5.414213562,-1.414213562) ;
  \draw [->] (2.75,0) -- (7.25,0) ;
  \draw [->] (5,-2.25) -- (5,2.25) ; 
  \draw[dashed] (5,0) -- (5.414213562,0.565826248) ;
  \draw (5.414213562,0.565826248) node {\small $\bullet$} ;
  \draw (6.4,0.9) node {$\beta^{-1}\!\!\circ\!\psi_1\!(\!x\!)$} ;
  \draw (5.4,0) arc (0:53.7939689:0.4) ;
  \draw (5.6,0.3) node {$\tilde\theta$};
  \draw (3.7,-0.5) node {$U_2$};

  \fill[gray!40] (7.8,-2) -- (10,-2) -- (10,2) -- (7.8,2) -- cycle;
  \draw [->] (7.8,-2) -- (10.25,-2) ;
  \draw [->] (7.8,-2) -- (7.8,2.25) ;
  \draw[dashed] (7.8,1.33) -- (10,1.33);
  \draw (10,1.33) node {\small$\bullet$} ;
  \draw (11.1,1.4) node {$\psi_2^{-1}\!\!\circ\psi_1\!(\!x\!)$} ;
  \draw [<->] (8.9,-2) -- (8.9,1.33) ;
  \draw (8.8,-0.5) node {$\theta$};
  \draw[thick] (10,-2) -- (10,2) ;
  \draw (8.9,2.2) node {$\Omega_2$};

\end{tikzpicture}
\begin{minipage}{14.5cm} \caption{The domains $\Omega_1$, $U_1$, $U_2$, $\Omega_2$. The point with angle $\theta$ is mapped onto the point of $U_1$ with angle~$\tilde\theta$. The Lemma~\ref{lemdiffeo} gives us angle-preserving diffeomorphisms, so the corresponding point of $U_2$ has also $\tilde\theta$ as angle. It is mapped onto the point of $\Omega_2$ of vertical coordinate $\theta$.} \label{fig2} \end{minipage}
\end{center}
\end{figure}

\noindent We may then apply Corollary~\ref{codm}, with the densities $f_\delta$, $g_\delta$ on the domains $\Omega_1$, $\Omega_2$, to get a map $U_\delta: \Omega_1 \mapsto \Omega_2$ satisfying the statement of the Corollary. In particular, thanks to the bounds on $f_\delta$, $g_\delta$ and on their Lipschitz constants and, the Lipschitz constant of $U_\delta$ is bounded uniformly on $\delta$ by some constant $C$. Now we consider $S_\delta := S_2\circ U_\delta \circ S_1$. Given the image measures of $\mu$, $f_\delta$, $g_\delta$ by the maps $S_1$, $U_\delta$, $S_2$, it is clear that $S_\delta$ sends $\mu|_{\Omega_\delta}$ to $\nu|_{\Omega_{\delta}'}$. Moreover, we have the following estimates:
$$ \Lip U_\delta \leq C, \quad \Lip S_2 \leq \Lip \Phi \quad \text{and } \Lip S_1 = \frac{1}{\delta} $$
$$ \text{thus} \qquad \Lip S_\delta \leq \frac{C}{\delta} $$
for some constant $C$ which does not depend on $\delta$. Finally, given the expression of $S_2$ and the boundary condition satisfied by $U_\delta$, it is clear that $S_\delta(x) = T(x)$ for $|x| = \delta$. Therefore, the map $T_\eps$ defined by
$$ T_\eps(x) = \left\{ \begin{array}{ll} T(x) & \text{if } |x| \geq \delta \\ S_\delta(x) & \text{if } x \in \Omega_\delta \end{array} \right. $$
is globally Lipschitz on $\Omega$.

\paragraph{Step 4: estimates on $F_\eps(T_\eps)$.} We restart from the expression \eqref{fepsteps}, and use the facts that $\psi_\eps = 0$ and that $T_\eps=T$ outside of $\Omega_\delta$:
\begin{multline*} F_\eps(T_\eps) = \frac{1}{\eps} \int_{\Omega_\delta} (|S_\delta(x)-x|-|S_\delta(x)|+|x|) f(x) \dx x + \int_{\Omega_\delta} |DS_\delta(x)|^2 \dx x \\ 
+ \int_\delta^1 (||\phi(r,\cdot)||^2_{L^2}+||\partial_\theta\phi(r,\cdot)||^2_{L^2}-K)\,\frac{\text{d}r}{r} + \int_\delta^1 ||\partial_r\phi(r,\cdot)||^2_{L^2} \, r \dx r \end{multline*} 
We still have $|S_\delta(x)-x|-|S_\delta(x)|+|x| \leq 2|x|$ and $|DS_\delta(x)| \leq C/\delta$, so that
$$ \frac{1}{\eps} \int_{\Omega_\delta} (|S_\delta(x)-x|-|S_\delta(x)|+|x|) f(x) \dx x + \int_{\Omega_\delta} |DS_\delta(x)|^2 \dx x \leq \frac{\pi \sup f}{3} \frac{\delta^3}{\eps} + \frac{C^2\pi}{4} $$
which is bounded since $\delta = \eps^{1/3}$. On the other hand,
$$ ||\phi(r,\cdot)||_{L^2}^2+||\partial_\theta\phi(r,\cdot)||_{L^2}^2-K  = (||\phi(r,\cdot)||_{L^2}^2-||\Phi||_{L^2}^2)+(||\partial_\theta\phi(r,\cdot)||_{L^2}^2-||\Phi'||_{L^2}^2) $$
$$  = \langle \phi(r,\cdot)-\Phi,\phi(r,\cdot)+\Phi \rangle_{L^2} + \langle \partial_\theta\phi(r,\cdot)-\Phi',\partial_\theta\phi(r,\cdot)+\Phi' \rangle_{L^2} $$
$$ \leq ||\phi(r,\cdot)-\Phi'||_{L^1} (||\phi(r,\cdot)||_\infty+||\Phi||_\infty) + ||\partial_\theta\phi(r,\cdot)-\Phi'||_{L^1} (\Lip \phi+\Lip \Phi) $$
Since $\Phi$, $\phi(r,\cdot)$ are valued in $\Omega'$, their $L^\infty$-norm are controlled by $\sup R_2$. By combining this and the estimates \eqref{alphadelta1} and \eqref{alphadelta2}, we obtain
$$ 0 \leq ||\phi(r,\cdot)||_{L^2}^2+||\partial_\theta\phi(r,\cdot)||_{L^2}^2-K \leq Cr^2 $$
for $r$ small enough (and where $C$ does not depend on $r$). On the other hand, we know that $\phi$, $\Phi$ and their derivatives are globally bounded on $(0,1)\times(0,\pi/2)$. This proves that
$$ \int_0^1 (||\phi(r,\cdot)||^2_{L^2}+||\partial_\theta\phi(r,\cdot)||^2_{L^2}-K)\,\frac{\text{d}r}{r} < +\infty $$
$$ \text{and} \qquad \int_0^1 ||\partial_r\phi(r,\cdot)||^2_{L^2} \, r \dx r < +\infty $$
and we conclude that $(F_\eps(T_\eps))_\eps$ is bounded, as required.


\bibliographystyle{acm}
\bibliography{dpls}

\end{document}